\documentclass[12pt]{article}
\usepackage[utf8]{inputenc}
\usepackage{algorithm}
\usepackage{algorithmicx}
\usepackage{algpseudocode}

\usepackage[utf8]{inputenc} 
\usepackage[T1]{fontenc}    
\usepackage[english]{babel}

\usepackage{amsfonts}
\usepackage{nicefrac}       
\usepackage{microtype}      
\usepackage{lmodern}
\usepackage{amssymb,amsmath,amsthm}
\usepackage{bbm}
\usepackage{mathrsfs}
\usepackage{stmaryrd}
\usepackage{bbm,bm}
\usepackage{latexsym}
\usepackage{xcolor}
\usepackage{graphicx}
\usepackage{subfigure}
\usepackage{booktabs}
\usepackage{url}
\usepackage{hyperref}
\hypersetup{
	colorlinks=true,
	linkcolor=blue,
	filecolor=blue,
	anchorcolor=blue,
	urlcolor=cyan,
	citecolor=purple
}
\usepackage{enumerate}
\usepackage[shortlabels]{enumitem}
\usepackage{verbatim}
\usepackage{booktabs}       
\usepackage{multirow}

\usepackage{url}            
\usepackage[numbers]{natbib}

\usepackage[in]{fullpage}

\usepackage[short]{optidef}
\usepackage{algorithm}
\usepackage{thmtools}

\usepackage{tikz}
\usepackage{tcolorbox}
\usepackage[framemethod=tikz]{mdframed}

\declaretheorem{theorem}
\declaretheorem{corollary}
\declaretheorem{lemma}
\declaretheorem{proposition}

\declaretheorem{fact}

\declaretheoremstyle[qed=$\square$]{definitionwithend}
\declaretheorem{definition}
\declaretheorem{assumption}
\declaretheorem[style=definitionwithend]{example}
\declaretheorem{remark}

\definecolor{gold}{rgb}{0.85,0.65,0}

\numberwithin{subsection}{section}

\let\emptyset\varnothing

\usepackage{mathrsfs}
\def\A{{\bm{A}}}
\def\B{{\mathbb{B}}}

\def\N{{\mathbb{N}}}

\def\Q{{\mathbb{Q}}}
\def\R{{\mathbb{R}}}

\def\bI{{\mathbf{I}}}

\def\cA{{\cal A}}
\def\cB{{\cal B}}
\def\cC{{\cal C}}
\def\cD{{\cal D}}
\def\cE{{\cal E}}
\def\cF{{\cal F}}
\def\cG{{\cal G}}
\def\cH{{\cal H}}
\def\cI{{\cal I}}
\def\cJ{{\cal J}}

\def\cL{{\cal L}}

\def\cN{{\cal N}}
\def\cO{{\cal O}}

\def\cS{{\cal S}}

\def\cU{{\cal U}}
\def\cV{{\cal V}}

\def\sP{\mathscr{P}}

\def\a{{\boldsymbol{a}}}
\def\b{{\bm b}}

\def\d{{\bm d}}
\def\e{{\bm e}}

\def\p{{\bm p}}

\def\v{{\bm v}}

\def\x{{\bm x}}
\def\y{{\bm y}}
\def\z{{\bm z}}
\def\bz{{\bm 0}}

\def\1{{\bm 1}}

\newcommand{\blam}{{\boldsymbol\lambda}}

\newcommand{\dist}{\operatorname{dist}}

\DeclareMathOperator*{\argmin}{arg\,min}
\DeclareMathOperator*{\argmax}{arg\,max}

\DeclareMathOperator{\Diag}{Diag}

\DeclareMathOperator{\range}{range}
\DeclareMathOperator{\graph}{graph}

\DeclareMathOperator{\spann}{span}
\DeclareMathOperator{\inter}{int}

\DeclareMathOperator{\cl}{cl}
\DeclareMathOperator{\bd}{bd}

\DeclareMathOperator{\dom}{dom}

\usepackage{soul}

\title{ {\Large{\bf  A Unified Framework for Iterate Convergence of Bregman Proximal Methods} }}
\author{He Chen\thanks{Department of Systems Engineering and Engineering Management, The Chinese University of Hong Kong, Shatin, NT, Hong Kong. \texttt{hchen@se.cuhk.edu.hk}},\quad Jiaming Fan\thanks{Department of Systems Engineering and Engineering Management, The Chinese University of Hong Kong, Shatin, NT, Hong Kong. \texttt{jmfan@se.cuhk.edu.hk}},\quad Anthony Man-Cho So\thanks{Department of Systems Engineering and Engineering Management, The Chinese University of Hong Kong, Shatin, NT, Hong Kong. \texttt{manchoso@se.cuhk.edu.hk}}
}
\date{}
\begin{document}
	\maketitle
	\begin{abstract}
		Iterate convergence of Bregman proximal methods (BPMs) has long remained open, especially for nonconvex objectives. Recently, \citet{chen2026skl} made progress by establishing iterate convergence for a BPM via the so-called scaled Kurdyka-\L{}ojasiewicz (SK\L{}) property, but only for the Shannon entropy kernel and linearly constrained problems.  In this paper, we develop a unified iterate convergence framework that applies to a broad group of kernels and composite objective functions.  Our approach extends the analytical tools in \cite{chen2026skl}, in particular the SK\L{} property, which plays a central role in ensuring convergence of the generated sequences. By introducing kernel-dependent parameterization functions, we show that the extended SK\L{} property holds for all continuous subanalytic functions, particularly when the kernel has a closed domain. We then verify that the assumptions of the framework are satisfied by standard BPMs under mild regularity conditions, thereby establishing their iterate convergence for a wide range of objective functions. Furthermore, based on the parameterization functions, we show that the continuous-time BPM (mirror flow) converges to a stationary point for o-minimal definable objective functions, yielding the first trajectory convergence result for mirror flow without imposing convexity assumptions on the objective function or isolation assumptions on stationary points. Taken together, these discrete- and continuous-time convergence results provide a unified trajectory convergence theory for BPMs.
	\end{abstract}
	\section{Introduction}
	The composite optimization problem is a fundamental framework that arises in a wide range of applications, including signal and image processing \cite{Jain2017nonconvex,daubechies2004iterative}, statistical learning \cite{ng2004feature,wang2023linear,bao2022fast}, sparse recovery \cite{donoho2006compressed,candes2006robust}, and portfolio optimization \cite{brodie2009sparse,demiguel2009generalized}, among others. A typical composite optimization problem takes the form
	\begin{equation}\label{eq:model}
		\min_{\x\in\R^n}~F(\x):=f(\x)+g(\x),\tag{$\sP$}
	\end{equation}
	where $\dom(g)=\cD$ is a nonempty closed convex set, $f:\R^n \rightarrow  \R$ is a continuously differentiable function, and $ g:\R^n \rightarrow \overline \R$ is a proper convex function that may be nonsmooth. In many of these applications, the smooth term $f$ models data fidelity or empirical risk, and the nonsmooth component $g$ encodes structural properties such as sparsity, low-rankness, regularization, or explicit constraints through indicator functions.
	
	The composite structure of \eqref{eq:model} has motivated the development of a broad class of proximal methods that exploit the distinct properties of the two objective components. Given a reference point $\hat{\x}\in\R^n$, these methods typically replace the smooth part $f$ by a surrogate function $\x\mapsto\gamma(\x;\hat{\x})$ to improve tractability, and incorporate a quadratic regularizer $\x\mapsto \|\x-\hat{\x}\|_2^2$ to ensure well-posedness and uniqueness of the subproblem solution. This leads to the iterative scheme
	\begin{equation*}
		\x^{k+1}=\argmin_{\x\in\R^n}\left\{ \gamma(\x;\x^k)+g(\x)+ \frac{1}{\alpha_k}\|\x-\x^k\|_2^2\right\},\qquad\forall~k\geq0,
	\end{equation*}
	where $\alpha_k\in\R_{++}$ is the step size at the $k$-th iteration.
	A common choice of surrogate is the first-order approximation 
	\[ \gamma(\x;\hat{\x})=f(\hat{\x})+\left<\nabla f(\hat{\x}),\x-\hat{\x}\right>,\] which gives the proximal gradient method. A more conservative alternative is to take $\gamma(\x;\hat{\x})=f(\x)$, leading to the proximal point method.
	These schemes have been extensively studied \cite{beck2017first,cai2022developments} and their convergence theory is is now well established, even for nonconvex objectives. In a  seminal work, \citet{attouch2013convergence} developed a unified convergence framework rooted in the Euclidean geometry induced by the quadratic regularizer, which characterizes the behavior of Euclidean proximal methods through the so-called sufficient decrease and relative error conditions. As a consequence, the iterates generated by these methods are guaranteed to converge to a stationary point of $F$ provided that $F$ satisfies the Kurdyka-\L{}ojasiewicz (K\L{}) property, $\nabla f$ is $L$-Lipschitz continuous, and step sizes satisfy $0<\underline{\alpha}<\alpha_k<1/L$. 
	
	Despite their success, Euclidean proximal methods may fail to fully exploit the geometry induced by $g$ or the non-Euclidean smoothness structure of $f$ arising in many settings. For example, objective functions arising in certain applications do not satisfy the classical $L$-smoothness assumption, making the proximal gradient method less effective; see \cite[Section 5]{bauschke2017descent} and also \cite{lu2018relatively}. 
	A common remedy is to replace the quadratic regularizer with a generalized distance whose growth adapts to the underlying problem geometry. The Bregman distance $D_h(\x,\y)$ naturally serves as an alternative due to flexible choice of the kernel $h$ \cite{bregman1967relaxation}. Here, $h:\R^n\to\overline\R$ is a strictly convex function differentiable on $\inter(\dom(h))$, and the Bregman distance with kernel $h$ is defined by \[D_h(\x,\y)=h(\x)-h(\y)-\left<\nabla h(\y),\x-\y\right>,\qquad \forall~ \x\in\dom(h),~\y\in\inter(\dom(h)).\]
	This leads to geometry-adaptive variants of Euclidean proximal algorithms, collectively known as Bregman proximal methods (BPMs). Two prominent examples are the Bregman proximal point method (BPPM) and the Bregman proximal gradient method (BPGM). 
	Let the step sizes be denoted by $\{\alpha_k\}_{k\geq0}\subseteq\R_{++}$.
	The BPPM, introduced by \citet{censor1992proximal}, iteratively minimizes the objective function regularized by the Bregman distance, whose iterates satisfy
	\begin{equation}\label{eq:bppm}
		\begin{aligned}
			&\x^{k+1}=\argmin_{\x\in\R^n}\left\{ F(\x)+\frac{1}{\alpha_k}D_h(\x,\x^k) \right\};\\
			&\x^k\in\inter(\dom(h))\cap \dom(F),\qquad\quad\forall~k\geq0.
		\end{aligned}
		\tag{$\dagger$}
	\end{equation}
	The BPGM, also known as mirror descent \cite{nemirovskij1983problem}, follows the update rule
	\begin{equation}\label{eq:bpgm}
		\begin{aligned}
			&\x^{k+1}=\argmin_{\x\in\R^n}\left\{\left<\nabla f(\x^k),\x-\x^k\right>+g(\x)+\frac{1}{\alpha_k}D_h(\x,\x^k) \right\};\\
			&\x^k\in\inter(\dom(h))\cap \dom(F),\qquad\quad\forall~k\geq0.
		\end{aligned} \tag{$\ddagger$}
	\end{equation}
	Here, the condition $\x^k\in\inter(\dom(h))\cap \dom(F)$ in \eqref{eq:bppm} and\eqref{eq:bpgm} ensures the well-definedness of the Bregman distance and is satisfied under mild assumptions; see \cite{chen1993convergence,bolte2018first,chen2025spurious} for detailed discussions.
	
	The above BPMs have been applied in various applications \cite{lu2018relatively,ding2025stochastic,yang2025inexact}, which could admit simpler updates \cite{beck2003mirror}, achieve improved convergence rates \cite[Section 4]{bubeck2014theory}, and accommodate a wider range of tractable objective functions \cite{bauschke2017descent,lu2018relatively} compared with the Euclidean proximal methods. These advantages attract much interest to their convergence behavior. \citet{iusem1999central} posed the question on iterate convergence of BPMs for general kernels and general objective functions as early as the 1990s. For convex objective functions, substantial progress has been made: \citet{chen1993convergence} proved the iterate convergence of BPPM to an optimal solution of \eqref{eq:model}; \citet{bauschke2017descent} showed that when $Lh-f$ is convex,  the BPGM sequence $\{\x^k\}_{k\geq0}$ converges to a minimizer of $F$ with the function value gaps $F(\x^k)-\min_{\x\in\R^n}F(\x)$ decreasing to $0$ at a rate of $\cO(1/k)$. See also \cite{lu2018relatively,beck2003mirror,yang2022bregman} for other convergence analysis based on the function-value sequence. These results parallel the classical convergence theory of Euclidean proximal methods in convex optimization \cite{beck2017first} and by now are fairly complete. In contrast, for nonconvex objective functions, convergence guarantees for the sequence $\{\x^k\}_{k\geq0}$ generated by BPMs remain relatively limited. Existing convergence results can be broadly categorized according to their assumptions as follows:
	\begin{enumerate}
		\item[{\rm (S1)}] Suppose that $\nabla h$ is (locally) Lipschitz continuous with $\dom(h)=\R^n$, $F$ satisfies the K\L{} property, and $f$ is $L$-smooth. Then, the sequence $\{\x^k\}_{k\geq0}$ converges to a critical point of $F$ \cite{teboulle2018simplified,zhu2021level,wu2021inertial,takahashi2022new,takahashi2025approximate}.
		\item[{\rm (S2)}] Suppose that $F$ satisfies certain structural properties ensuring that its critical points are isolated. Then, under kernels with non-Lipschitz continuous gradients, $\{\x^k\}_{k\geq0}$ still converges to a critical point of $F$ \cite{ding2025exploration,azizian2022rate}.
		\item[{\rm (S3)}] Suppose that $F$ satisfies some Bregman growth conditions under a general kernel $h$. Then, the sequence $\{\x^k\}_{k\geq0}$ converges to an optimal solution of \eqref{eq:model} \cite{bauschke2019linear,zhang2021proximal,chirinosrodríguez2026linear}.
	\end{enumerate}
	The setting in (S1) precludes many popular kernels such as the Shannon entropy kernel  $h_S:\x\mapsto\sum_{i=1}^nx_i\log(x_i)$.
	Although (S2) and (S3) accommodate general kernels, their assumptions on the objective functions are often restrictive and hard to verify in practice. A central obstacle to removing these assumptions and establishing broad convergence guarantees is the sharp growth property of the Bregman distance, particularly near the boundary of kernel's domain. Such a geometry is highly distinguished from the Euclidean distance and leads to spurious stationary points that can trap BPMs \cite{chen2025spurious,ding2026nonkkt}. To overcome these issues and yield iterate convergence of BPGM to stationary points, \citet{chen2026skl} focuses on the Shannon entropy kernel $h_S$ and introduced the scaled K\L{} (SK\L{}) property that captures the growth behavior of functions in the Bregman geometry. Then, in the linearly constrained setting, \citet{chen2026skl} showed that (i) the sequence of BPGM with kernel $h_S$ converges to a stationary point if the objective function satisfies the SK\L{} property; (ii) all continuous subanalytic functions satisfy the SK\L{} property.  
	Though these developments provide a practical convergence guarantee, they crucially rely on the specific structure of the Shannon entropy kernel and the explicit update form of the BPGM in linearly constrained settings. This naturally calls for an extension to general kernels and problem settings. In this paper, we extend the analysis tools in \cite{chen2026skl} to a broad group of kernels and establish a unified iterate convergence framework for the BPMs. In particular, if a bounded sequence satisfies certain variants of relative error and sufficient decrease, and the objective function satisfies the (extended) SK\L{} property, then it converges. Such a framework generalizes the classical one for Euclidean proximal methods \cite{attouch2013convergence}) and requires a complementary condition that the convergence of concerned sequence implies the stationarity of limiting point. We then verify the required properties for sequences generated by the BPGM and BPPM for a broad host of kernels under mild regularity conditions. Moreover, by defining a parameterization function $\phi$ associated with each kernel $h$, we show that the extended SK\L{} property also holds for continuous subanalytic functions, particularly when the kernel has a closed domain. We thereby obtain the iterate convergence of BPGM and BPPM for a general host of kernels and objective functions through the framework. This significantly expands the applicable settings of the results in \cite{chen2026skl} and demonstrates the power of our Bregman convergence framework.
	
	
	As a byproduct of our analysis, particularly the introduction of parameterization functions, we additionally develop trajectory convergence of continuous-time BPM, i.e., the mirror flow, for \eqref{eq:model}. Let $\phi$ be the parameterization function associated with the kernel $h$.
	Our approach reduces the convergence of mirror flow for the objective $F$ to that of a Euclidean gradient flow for the composition $F\circ\phi$ via the transformation $\x=\phi(\y)$. We subsequently show that $\phi$ is o-minimal definable under mild conditions, and further derive trajectory convergence of the resulted gradient flow provided that it is bounded and the new objective $F$ is o-minimal definable. Then, to show the convergence of original mirror flow, which is generally assumed bounded, the key issue is whether our transformation to Euclidean gradient flow preserves the boundedness. As it turns out, the preservation holds when $\dom(h)$ is closed. We therefore establish trajectory convergence of bounded mirror flows to stationary points of \eqref{eq:model} for o-minimal definable objectives $F$ and kernels with closed domains, which improves existing results that require either convexity-type conditions on objective functions \cite{Attouch2004RegularizedLD,Alvarez2018HessianRG,Bolte2003BarrierOA} or an isolation assumption on stationary points \cite{ding2025exploration}. 
	Taken together, our discrete- and continuous-time convergence results contribute to a general iterate and trajectory convergence theory for BPMs, marking a significant step toward resolving this long-standing open problem.

	\paragraph{Organization.} The rest of the paper is organized as follows. Sec. \ref{sec:pre} records properties of separable kernel functions and reviews some basic notions from o-minimal structure. Sec. \ref{sec:general} defines the SK\L{} property for separable kernels and presents a general framework for establishing the iterate convergence of discrete-time BPMs.  Sec. \ref{sec:application} verifies the framework's conditions for BPPM and BPGM, completing their convergence analysis.  In Sec. \ref{sec:flow}, we establish the trajectory convergence of mirror flow. Final remarks are given in Sec. \ref{sec:conclusion}.
	
	\paragraph{Notation.} 
	The notation adopted in this paper is mostly standard. We denote the set $\{1,2,\ldots,n\}$ by $[n]$ for each positive integer $n$, and the extended real number set $\R\cup\{+\infty\}$ by $\overline{\R}$.  For a vector $\x\in\R^n$, we denote its $i$-th element (resp. subvecter indexed by $\cI\subseteq[n]$) by $x_i$ (resp. $\x_{\cI}$).
	We use $\|\x\|_2$ to denote the Euclidean norm of $\x$ and $\B(\x,r)$ to denote the ball $\{\y\in\R^n:\|\y-\x\|_2\leq r\}$.
	We use
	$\dist(\x,\cU)\coloneqq \inf_{\y\in\cU}\|\x-\y\|_2$ 
	to denote the distance of $\x$ to a set $\cU\subseteq\R^n$.  For a closed convex set $\cD\subseteq\R^n$, we define its normal cone at $\x\in\cD$ by
	\[\cN_{\cD}(\x)\coloneqq \left\{\v\in\R^n:\left<\v,\y-\x\right>\leq0,~\forall~\y\in\cD\right\}.\]
	For a proper lower-semicontinuous (lsc) function $H:\R^n\to\overline{\R}$, we define its domain by $\dom(H)\coloneqq \{\x\in\R^n:H(\x)<+\infty\}$ and its graph by $\graph(H)\coloneqq\{(\x,H(\x))\in\R^{n+1}:\x\in\dom(H)\}$. We write $H\in\cC^k(\cU)$ if $\cU\subseteq\dom(H)$ and $H$ is $k$-th order continuously differentiable on $\cU$. 
	We denote the Fr${\rm \acute{e}}$chet subdifferential of $H$ at $\x\in {\rm dom}(H)$ by $\widehat{\partial} H(\x)$, which is the set of vectors $\v\in\mathbb{R}^{n}$ such that 
	\[ \lim\limits_{\y\neq \x}\inf\limits_{\y\to\x}\frac{H(\y)-H(\x)-\left< \bm{v}, \bm{y}-\bm{x}\right>}{\|\y-\x\|_2}\geq0.\]
	We denote the limiting subdifferential of $H$ at $\x\in {\rm dom} (H)$ by $\partial H(\x)$, which is defined by
	\[\partial H(\x)\coloneqq\left\{\v\in\R^n: \exists\ \x^k\to \x,~H(\x^k)\to H(\x),~ \v^k\in\widehat{\partial} H(\x^k)\rightarrow \v \right\}.\]
	For any univariate function $\zeta:\R\rightarrow\overline{\R}$ and any vector $\x\in\R^n$, we define $\zeta(\x)\coloneqq(\zeta(x_1),\ldots,\zeta(x_n))$, and $\Diag(\x)$ by the diagonal matrix whose $i$-th diagonal element is $x_i$. 
	
	\section{Preliminaries}\label{sec:pre}
	In this section, we record essential preliminaries for our developments.
	Throughout the paper, we study kernels with a separable structure, defined as follows.
	\begin{definition}[Separable Kernels]\label{def:kernel}
		A function $h:\R^n \rightarrow \overline\R$ is called a {separable kernel function} if the following hold: 
		\begin{enumerate}[label={{\rm (\roman*)}}]
			\item There exists a lsc function  $\varphi:\R\rightarrow  \overline\R$ such that 
			$h(\x)=\sum_{i=1}^n\varphi(x_i)$.
			\item The function $\varphi$ is strongly convex and $\varphi\in\cC^3({\rm{int}}(\dom(\varphi)))$.
			\item For every sequence  $\{x^k\}_{k \ge 0} \subseteq \mathrm{int}(\dom(\varphi)) $  converging to a point $x \in \mathrm{bd}(\dom(\varphi))$, we have $|\varphi'(x^k)| \to +\infty$. 
		\end{enumerate}
	\end{definition}
	The separable structure of kernel functions has been widely adopted in the literature on BPMs; see e.g., \cite{liconvergent,bauschke2019linear,chen2025spurious}. The strong convexity and high-order smoothness of $\varphi$ are commonly assumed for studying the continuous-time BPM \cite{Alvarez2018HessianRG,bomze2019hessian,ding2025exploration}. Property (iii), known as essential smoothness, was throughly studied in \cite[Sec. 26]{rockafellar2015convex} and has been recognized as a defining characteristic of kernel functions \cite{bauschke2017descent,chen1993convergence}. As the following examples show, these properties hold for many commonly used kernels.
	\begin{example}(See \citep[Example 1]{bauschke2017descent}).
		\begin{enumerate}[label={{\rm (\roman*)}}, itemsep=0.5pt]
			\label{example:h}
			\item  Shannon entropy kernel 
			$h(\x)=\sum_{i=1}^nx_i\log(x_i)$;
			\item Fractional power kernel $h(\x)=\sum_{i=1}^npx_i-\frac{x_i^p}{1-p}$ ($0<p<1$);
			\item Fermi–Dirac entropy kernel
			$h(\x)=\sum_{i=1}^nx_i\log(x_i)+(1-x_i)\log(1-x_i)$;
			\item Hellinger entropy kernel $h(\x)=\sum_{i=1}^n-\sqrt{1-x_i^2}$;
			\item Burg entropy kernel
			$h(\x)=\sum_{i=1}^n-\log(x_i)$.
		\end{enumerate}
	\end{example}
	As a direct corollary of the properties in Definition \ref{def:kernel}, we have the following lemma concerning the directional limit of the kernel gradients, which will prove useful in analyzing the behavior of BPMs.
	\begin{lemma}\label{le:divide}
		Consider a separable kernel $h:\x\mapsto\sum_{i=1}^n\varphi(x_i)$. Let $\{\z^k\}_{k\geq0}\subseteq \inter(\dom(h))$ be a sequence converging to $\bar{\z}\in\dom(h)$. Let $\{T_k\}_{k\geq0}$ be a positive sequence such that $T_k\to+\infty$ and $\nabla h(\z^k)/T_k\to\bar\blam\in\R^n$. Then, we have
		\[\bar\blam\in\cN_{\dom(h)}(\bar{\z}),\quad{\rm  i.e. },\quad \left<\bar{\blam},\z-\bar{\z} \right>\leq0,\quad\forall~\z\in\dom(h). \]
	\end{lemma}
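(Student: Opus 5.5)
The argument reduces the statement to a one-dimensional claim about each coordinate, because both $\nabla h$ and $\dom(h)$ are separable. Indeed, $\dom(h)=\prod_{i=1}^n\dom(\varphi)$ and $\nabla h(\z)=(\varphi'(z_1),\ldots,\varphi'(z_n))$. The normal cone of a product of closed intervals is the product of the one-dimensional normal cones. It therefore suffices to show, for every $i\in[n]$, that
\[
\bar\lambda_i\,(z-\bar z_i)\leq 0\quad\text{for all } z\in\dom(\varphi).
\]
Summing over $i$ then gives $\left<\bar\blam,\z-\bar\z\right>\leq0$ for every $\z\in\dom(h)$. Here $\bar\lambda_i=\lim_k\varphi'(z^k_i)/T_k$.

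Fix $i$ and split into two cases according to where $\bar z_i$ lies. Since $\varphi$ is convex and lsc, $\dom(\varphi)$ is an interval; also $\bar z_i\in\dom(\varphi)$ because $\bar\z\in\dom(h)$.

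In the first case, $\bar z_i\in\inter(\dom(\varphi))$. Then $\varphi'$ is continuous at $\bar z_i$ because $\varphi\in\cC^3$ on the interior. Hence $\varphi'(z^k_i)\to\varphi'(\bar z_i)$, which is finite, and dividing by $T_k\to+\infty$ gives $\bar\lambda_i=0$. The inequality holds trivially.

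In the second case, $\bar z_i\in\bd(\dom(\varphi))$. Say it is the left endpoint, so $\dom(\varphi)\subseteq[\bar z_i,+\infty)$; the right endpoint is symmetric. We need $\bar\lambda_i\leq 0$. Pick any interior point $c$. For $z^k_i$ close to $\bar z_i$ we have $z^k_i<c$, and monotonicity of $\varphi'$ (convexity) gives $\varphi'(z^k_i)\leq\varphi'(c)$. So $\varphi'(z^k_i)$ is bounded above. By Definition~\ref{def:kernel}(iii), $|\varphi'(z^k_i)|\to+\infty$, so in fact $\varphi'(z^k_i)\to-\infty$. Then $\varphi'(z^k_i)/T_k\leq \varphi'(c)/T_k\to0$, so $\bar\lambda_i\leq0$. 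Since every $z\in\dom(\varphi)$ satisfies $z-\bar z_i\geq0$, we get $\bar\lambda_i(z-\bar z_i)\leq0$. At the right endpoint the signs flip: $\bar\lambda_i\geq0$ and $z-\bar z_i\leq0$.

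The only mildly delicate step is the sign argument at the boundary. It needs the monotone bound $\varphi'(z_i^k)\leq\varphi'(c)$ rather than property (iii) alone, since (iii) only controls the absolute value. The degenerate situation where $\dom(\varphi)$ is a single point is excluded by strong convexity together with a nonempty interior.
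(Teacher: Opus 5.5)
Your proof is correct and follows the paper's argument: work coordinatewise, get $\bar\lambda_i=0$ at interior indices from continuity of $\varphi'$, get the sign of $\bar\lambda_j$ at endpoint indices from the one-sided shape of $\dom(\varphi)$, and then sum. Your monotone bound $\varphi'(z^k_i)\le\varphi'(c)$ makes precise the sign step that the paper attributes to ``essential smoothness and convexity.'' It also shows that property (iii) is not actually needed to conclude $\bar\lambda_i\le 0$.
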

	\begin{proof}
		We define the interior and boundary index sets 
		\[\cI=\{i:\bar{z}_i\in\inter(\dom(\varphi))\},\quad \cJ=\{j:\bar{z}_j\in\inter(\dom(\varphi))\}.\]
		For $\bar{\blam}_{\cI}$, the convergence $\z^k\to\bar{\z}$ implies $\varphi^{\prime}(z^k_i)\to\varphi^{\prime}(\bar{z}_i)$, $i\in\cI$, and hence \[\bar{\lambda}_{i}=\lim_{k\to\infty}\frac{\varphi^{\prime}(z^k_i)}{T_k}=0,\qquad\forall~i\in\cI.\] 
		For $\bar{\blam}_{\cJ}$,
		when $\bar{z}_j$ is the left-end (resp. right-end) point of $\dom(\varphi)$, one has
		$z_j-\bar{z}_j\geq0$ (resp. $z_j-\bar{z}_j\leq0$) for $z_j\in\dom(\varphi)$; and the essential smoothness and convexity of $\varphi$, the convergence $z^k_j\to\bar{z}_j$ ensure $\varphi^{\prime}(z^k_j)\to-\infty$ (resp. $\varphi^{\prime}(z^k_j)\to+\infty$), yielding $\bar{\lambda}_j\leq0$ (resp.  $\bar{\lambda}_j\geq0$). Combined with $\bar\blam_{\cI}=\bz$, these imply
		\begin{equation*}
			\left<\bar{\blam},\z-\bar{\z} \right>=\sum_{j\in\cJ}\bar{\lambda}_j(z_j-\bar{z}_j)\leq0,\qquad\forall~\z\in\dom(h)=[\dom(\varphi)]^n. 
		\end{equation*}
	\end{proof}
	We then introduce the following settings on Problem \eqref{eq:model} and kernel functions, which are significantly more general than those in \cite{chen2026skl}.
	\begin{assumption}\label{assum:general} We assume:
		\begin{enumerate}[label={{\rm (\roman*)}}]
			\item $h:\x\mapsto\sum_{i=1}^n\varphi(x_i)$ is a separable kernel with $\cl(\dom(h))\supseteq\dom(F)=\dom(g)$. Moreover, the function $\hat{\varphi}:\cl(\dom(\varphi))\to\R_+$ defined by $\hat{\varphi}(x)=1/\varphi^{\prime\prime}(x)$ on ${\rm int}(\dom(\varphi))$ and $\hat{\varphi}(x)=0$ on ${\rm bd}(\dom(\varphi))$ is continuous, globally subanalytic.
			\item $F=f+g$, where $\dom(g)$ is a closed set,  $f$ is continuously differentiable on $\dom(g)$, and $g$ is convex, locally Lipschitz continuous on $\dom(g)$. Furthermore, $\inter(\dom(h))\cap\dom(g)$ is nonempty.
		\end{enumerate}
	\end{assumption}
	As (global) subanalyticity covers quite broad functions, the condition (i) above is very general and covers all kernels in Example \ref{example:h}. Hence, Assumption \ref{assum:general} substantially generalizes the setting $g=\delta_{\cD_P}$ and $h=h_S$ considered in \cite{chen2026skl}. More importantly, it enables an estimation of $\varphi^{\prime\prime}(x)$ near the boundary and ensures that $\varphi$ is generalized self-concordant (see \cite[Definition 1]{Sun2017GeneralizedSF}), which further leads to a characterization of the closedness of $\dom(h)$.
	\begin{proposition}\label{pro:estimate}
		Suppose that Assumption \ref{assum:general} (i) holds. Then, for any $\bar{x}\in\bd(\dom(\varphi))$, there exists a unique $\theta_{\bar{x}}\in(1,2]$ such that for $x\in\inter(\dom(\varphi))$ near $\bar{x}$,
		\begin{equation}\label{eq:estimate}
			|\varphi^{\prime\prime\prime}(x)|=\Theta\left(\varphi^{\prime\prime}(x)^{\theta_{\bar{x}}} \right),\qquad 	\varphi^{\prime\prime}(x)=\Theta\left(|x-\bar{x}|^{-\frac1{\theta_{\bar{x}}-1} }\right).
		\end{equation}
	\end{proposition}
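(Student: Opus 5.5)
Work near a fixed boundary point $\bar x$; by symmetry take $\bar x$ to be the left endpoint, so $x\downarrow\bar x$. Set $u(t)=\hat\varphi(\bar x+t)$ for small $t>0$. By Assumption \ref{assum:general}(i), $u$ is continuous, globally subanalytic, and satisfies $u(0)=0$ and $u>0$ on $(0,\epsilon)$. The proof rests on the Puiseux expansion of one-variable subanalytic functions: near $0^+$,
\[u(t)=c\,t^{q}\,(1+o(1)),\qquad c>0,\ q\in\Q_{>0},\]
and this expansion may be differentiated term by term, so $u'(t)=cq\,t^{q-1}(1+o(1))$. Since $\varphi''=1/u$, we get $\varphi''(x)=\Theta(|x-\bar x|^{-q})$. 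Writing $q=1/(\theta-1)$, i.e. $\theta=1+1/q$, yields the second estimate in \eqref{eq:estimate}. The exponent $q$, and hence $\theta_{\bar x}$, is uniquely determined by the asymptotics of $\varphi''$, which gives uniqueness.

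For the third derivative, differentiate $\varphi''=1/u$ to obtain $\varphi'''=-u'/u^2$. From the expansions, $|\varphi'''|=\Theta(t^{q-1}\cdot t^{-2q})=\Theta(t^{-q-1})$. On the other hand, $(\varphi'')^{\theta}=\Theta(t^{-q\theta})=\Theta(t^{-q-1})$, because $q\theta=q+1$. This proves the first estimate. The leading coefficient $cq$ of $u'$ is nonzero, so there is no cancellation and the $\Theta$ bound holds from below as well as above.

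Next we show $\theta\in(1,2]$, which is equivalent to $q\ge 1$; $\theta>1$ is automatic since $q>0$. Suppose $q<1$. Then $\varphi''=\Theta(t^{-q})$ is integrable at $0$, so $\varphi'(x)=\varphi'(x_0)-\int_x^{x_0}\varphi''$ stays bounded as $x\downarrow\bar x$. This contradicts essential smoothness, Definition \ref{def:kernel}(iii). Hence $q\ge1$ and $\theta\le2$. In the edge case $q=1$, $\int\varphi''$ diverges logarithmically, which is consistent with the Shannon kernel having $\theta=2$. By comparison, the Burg kernel gives $u=t^2$, so $q=2$ and $\theta=3/2$.

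The main obstacle is justifying the Puiseux-type expansion with a \emph{nonzero} leading term, and its termwise differentiability, for a merely continuous globally subanalytic $u$. I would invoke the standard fact that a one-variable globally subanalytic germ at $0^+$ is given by a convergent Puiseux series in $t^{1/N}$. The leading coefficient is nonzero because $u>0$ and $u\not\equiv0$. The function $u$ is $\cC^1$ on $(0,\epsilon)$ because $\varphi\in\cC^3$, and the convergent series can be differentiated termwise, which makes $u'$ follow the expansion. If one prefers to avoid series, the monotonicity theorem together with the o-minimal fact that $\lim t u'(t)/u(t)$ exists would also suffice.
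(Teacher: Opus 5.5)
Your proof is correct, and it takes a more direct route than the paper.

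The paper does not expand $\hat{\varphi}$ itself. Instead it proves an auxiliary two-sided \L{}ojasiewicz-type lemma (Lemma \ref{le:exponent}) by reopening the Bolte--Daniilidis--Lewis proof, applied to $\zeta=\hat{\varphi}^2$:
\begin{itemize}
\item it takes their level-set function $\xi(t)$ with its expansion $ct^{\eta}+o(t^{\eta})$, and uses monotonicity of $\zeta$ to identify $\xi(t)$ with $\zeta'(x)$ on the level set;
\item it rules out the ``K\L{} exponent $0$'' alternative using essential smoothness, which gives $|\zeta'|=\Theta(\zeta^{\eta})$, i.e.\ $|\varphi'''|=\Theta\bigl((\varphi'')^{3-2\eta}\bigr)$;
\item it recovers $\varphi''=\Theta\bigl(|x-\bar x|^{-1/(2-2\eta)}\bigr)$ by integrating $(\zeta^{1-\eta})'$;
\item it uses essential smoothness a second time to force $\eta\ge\tfrac12$, i.e.\ $\theta_{\bar x}=3-2\eta\in(1,2]$.
\end{itemize}

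You instead apply the one-variable Puiseux theorem directly to $u=\hat{\varphi}(\bar x+\cdot)$. Both estimates in \eqref{eq:estimate} then follow at once from $u\sim ct^{q}$ and the differentiated expansion $u'\sim cq\,t^{q-1}$. Your termwise differentiation is justified by writing $u(t)=t^{q}G(t^{1/N})$ with $G$ analytic and $G(0)=c$, and essential smoothness is needed only once, to get $q\ge 1$.

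Your route is shorter and gives slightly more: a genuine asymptotic equivalence $\varphi''(x)\sim c^{-1}|x-\bar x|^{-q}$ and rationality of $\theta_{\bar x}$. The cost is citing the Puiseux theorem for subanalytic germs as a black box. That same fact is what underlies the expansion of $\xi$ that the paper borrows, so your argument is essentially a shortcut to the paper's own source rather than a weaker substitute.
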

	\begin{corollary}\label{co:closed}
		Under the setting of Proposition \ref{pro:estimate}, the following equivalence holds:
		\[\begin{aligned}
			& \dom(\varphi)~{\rm is~ closed} \\
			\Longleftrightarrow~&\theta_{\bar{x}}\in(\frac32,2],\quad\forall~\bar{x}\in\bd(\dom(\varphi));\\
			\Longleftrightarrow~& \left|\int^{x}_{y}\sqrt{\varphi^{\prime\prime}(s)}~{\rm d}s\right| <+\infty,~\forall~x,y\in\cl(\dom(\varphi));.
		\end{aligned}
		\] 
	\end{corollary}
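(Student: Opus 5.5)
We argue boundary point by boundary point, using the asymptotic estimate \eqref{eq:estimate} of Proposition \ref{pro:estimate}. Since $\dom(\varphi)$ is a convex subset of $\R$, it is an interval; it has at most two endpoints, and closedness is a statement about each finite endpoint separately. Fix a finite endpoint $\bar x$, say the left one, and write $\theta=\theta_{\bar x}\in(1,2]$ and $p=1/(\theta-1)\in[1,\infty)$. By Proposition \ref{pro:estimate}, $\varphi''(x)=\Theta(|x-\bar x|^{-p})$ for $x\in\inter(\dom(\varphi))$ near $\bar x$. The plan is to show that each of the three conditions, localized at $\bar x$, is equivalent to $p<2$. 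Note that $p<2$ is exactly $\theta>3/2$.

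\emph{Closedness $\Leftrightarrow p<2$.} Integrate the estimate twice from an interior reference point $x_0$. Then $\varphi'(x)-\varphi'(x_0)=-\int_x^{x_0}\varphi''$ behaves like $-|x-\bar x|^{1-p}$ when $p>1$ and like $\log|x-\bar x|$ when $p=1$. In both cases $\varphi'\to-\infty$, which is consistent with essential smoothness (Definition \ref{def:kernel}(iii)). Integrating once more, $\varphi(x)-\varphi(x_0)$ stays bounded as $x\downarrow\bar x$ if and only if $\int_{\bar x}|\varphi'|<\infty$. Near $\bar x$, $|\varphi'|$ is of order $|x-\bar x|^{1-p}$ (or $|\log|x-\bar x||$ when $p=1$), so this integral is finite if and only if $1-p>-1$, i.e.\ $p<2$. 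Since $\varphi$ is convex and lsc, $\bar x\in\dom(\varphi)$ if and only if $\lim_{x\downarrow\bar x}\varphi(x)<+\infty$. Indeed, lsc together with convexity gives $\varphi(\bar x)=\lim_{x\downarrow\bar x}\varphi(x)$, since a convex function is continuous along segments into its domain. Hence $\bar x\in\dom(\varphi)$ if and only if $p<2$, if and only if $\theta\in(3/2,2]$.

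\emph{Finite integral $\Leftrightarrow p<2$.} We have $\sqrt{\varphi''(s)}=\Theta(|s-\bar x|^{-p/2})$, which is integrable near $\bar x$ if and only if $p/2<1$. Away from the endpoints $\varphi''$ is continuous, so there is no obstruction there. Thus $\int_y^x\sqrt{\varphi''}$ is finite for all $x,y\in\cl(\dom(\varphi))$ if and only if $p<2$ at every finite endpoint. When the interval is unbounded on one side, that side contributes no boundary point, and $\cl(\dom(\varphi))$ contains only finite points, so the condition is vacuous there.

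\emph{Assembling and the delicate step.} Combining the two localized equivalences over all boundary points gives the three-way equivalence. The one step that needs care is the double-integration argument: we must pass correctly from $\Theta$-bounds on $\varphi''$ to bounds on $\varphi'$ and then $\varphi$. Two points matter here. First, the constants are only valid near $\bar x$, so we split each integral into a compact interior part, which is finite, and a near-boundary part controlled by the estimate. Second, the case $p=1$ produces a logarithm, which is still integrable. Finally, we should remark that if $\dom(\varphi)=\R$ then there are no boundary points and every statement holds trivially.
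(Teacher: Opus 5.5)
Your proposal is correct and follows essentially the same route as the paper. Both arguments localize at each finite endpoint $\bar x$ and use $\varphi''(x)=\Theta(|x-\bar x|^{-1/(\theta_{\bar x}-1)})$ from Proposition~\ref{pro:estimate}. They characterize $\bar x\in\dom(\varphi)$ by finiteness of the one-sided limit of $\varphi$ (lsc plus convexity) together with a double integration of $\varphi''$; the paper writes this as the iterated integral $\int_{\bar x}^{\bar x+\Delta}\int_{s_1}^{\bar x+\Delta}\varphi''(s_2)\,{\rm d}s_2\,{\rm d}s_1$, which is exactly your $\int|\varphi'|$. Both then integrate $\sqrt{\varphi''}$ once for the third condition, and each criterion reduces to $1/(\theta_{\bar x}-1)<2$, i.e.\ $\theta_{\bar x}>3/2$.
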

	The above preliminaries characterize the growth behavior of the kernel functions. We next turn to regularity conditions for the objective functions and review several standard analytic notions
	\begin{definition}[O-minimal Structure \cite{coste2000introduction}]\label{def:o}
		Let $\cS=(\cS_n)_{n\in\N}$ be a collection such that $\cS_n$ is a family of subsets of $\R^n$ for each $n\in\N$. We call $\cS$ an o-minimal structure if it satisfies:
		\begin{itemize}
			\item For each $n\in\N$, $\cS_n$ contains all algebraic subsets of $\R^n$ and is a Boolean subalgebra.
			\item If $\cU\in\cS_m$ and $\cV\in\cS_n$, then $\cU\times \cV\in\cS_{m+n}$.
			\item If $\Pi_n:\R^{n+1}\to\R^n$ is the projection on the first $n$ coordinates and $\cU\in\cS_{n+1}$, then $\Pi_n(\cU)\in\cS_n$.
			\item The elements of $\cS_1$ are precisely the finite unions of points and intervals.
		\end{itemize}
	\end{definition}
	We say that	a set $\cU\subseteq\R^m$ is definable in $\cS$ if $\cU\in\cS_m$; and a function $H:\cU\to\R^n$ is definable in $\cS$ if its graph is definable in $\cS$.
	Based on Definition \ref{def:o}, it is easy to show the following properties for definable sets and functions. 
	\begin{proposition}\label{pro:o1}
		Let $\cS$ be an o-minimal structure and $\cU\subseteq\R^m$ be a definable set in $\cS$. Let $H:\cU\to\R^n$ be a definable function in $\cS$. The following hold:
		\begin{enumerate}[label={{\rm (\roman*)}}]
			\item The sets $\inter(\cU)$, $\cl(\cU)$ are also definable in $\cS$.
			\item If $H$ is injective, then its inverse $H^{-1}$ is definable in $\cS$.
			\item If the subset $\cV\subseteq\cU$ is definable in $\cS$, then the restricted function $H|_{\cV}$ is definable in $\cS$.
			\item If $n=1$ and the function $\tilde{H}:\R^m\to\R^{q}$ is defined by $\tilde{H}(\x)=(H(\x),\ldots,H(\x))$, then $\tilde{H}$ is definable in $\cS$. 
			\item If $H(\cU)\subseteq \cV\subseteq \R^n$ and $G:\cV\to\R^q$ is definable in $\cS$, then the composition $G\circ H:\cU\mapsto\R^q$ is also definable in $\cS$.
		\end{enumerate}
	\end{proposition}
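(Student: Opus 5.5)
We verify the five items one at a time, using only the axioms in Definition \ref{def:o}. Throughout, the basic tool is that a set obtained from definable sets by Boolean operations, Cartesian products with $\R^k$ (note $\R^k$ is algebraic), coordinate permutations, and projections is again definable. We also use that a set written as a first-order formula whose atoms are polynomial (in)equalities or memberships in definable sets, with quantifiers over real variables, is definable. This follows from the axioms: $\exists$ is a projection, $\forall$ is a complement composed with $\exists$ and a complement, conjunction is intersection, and atoms are algebraic or are definable sets padded by products with $\R^k$. We also need that coordinate permutations preserve definability. The graph of a permutation map is a linear subspace and hence algebraic, so the permuted set is a projection of the intersection of $\cU\times\R^m$ with that graph.

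\textbf{Item (i).} We write
\[
\cl(\cU)=\{\x:\forall \epsilon>0~\exists \y\in\cU,~\|\x-\y\|_2^2<\epsilon^2\}.
\]
This is a first-order formula whose atoms are polynomial conditions or the condition $\y\in\cU$, so $\cl(\cU)$ is definable. For the interior we use $\inter(\cU)=\R^m\setminus\cl(\R^m\setminus\cU)$, which is definable by the Boolean axiom together with the closure case.

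\textbf{Item (ii).} The graph of $H^{-1}$ is the image of $\graph(H)\subseteq\R^{m+n}$ under the coordinate swap $(\x,\y)\mapsto(\y,\x)$. This image is definable by the permutation remark above.

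\textbf{Item (iii).} We have $\graph(H|_{\cV})=\graph(H)\cap(\cV\times\R^n)$, which is an intersection of definable sets.

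\textbf{Item (iv).} We have
\[
\graph(\tilde H)=\{(\x,\z):\exists t,~(\x,t)\in\graph(H),~z_1=\cdots=z_q=t\}.
\]
This is definable because the equations $z_i=t$ define an algebraic set. Here $\tilde H$ is understood on $\cU$.

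\textbf{Item (v).} We have
\[
\graph(G\circ H)=\{(\x,\z):\exists \y,~(\x,\y)\in\graph(H),~(\y,\z)\in\graph(G)\}.
\]
To obtain this set, we intersect $\graph(H)\times\R^q$ with a coordinate rearrangement of $\R^m\times\graph(G)$ inside $\R^{m+n+q}$, and then project out the $\y$ coordinates.

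\textbf{Main obstacle.} The only point requiring care is the projection axiom. It only removes the last coordinate, so eliminating an arbitrary block of variables requires first permuting coordinates, which we justified via algebraic graphs. After that step, every item is a direct formula translation.
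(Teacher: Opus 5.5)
Your proposal is correct in substance, but it takes a different route from the paper. The paper gives no argument of its own: it cites \cite[Proposition 1.12]{coste2000introduction} for (i), \cite[Exercise 1.10]{coste2000introduction} for (iv), and \cite[Lemma 2.3]{van1998tame} for (ii), (iii), (v). You instead rebuild, from the four axioms of Definition \ref{def:o}, the principle that sets defined by first-order formulas over definable sets are definable. You then read each item off as a set identity. This is essentially how those textbook results are proved. Your route gains a self-contained argument; the citations gain brevity and avoid the coordinate bookkeeping. Your set identities for (i)--(v) are all right, and reading $\tilde H$ as defined on $\cU$ in (iv) is the correct interpretation of the statement.

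Two pieces of the bookkeeping do need repair.

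First, your permutation lemma is circular as written. You have $(\cU\times\R^m)\cap\graph(\sigma)=\{(\x,\sigma(\x)):\x\in\cU\}$. Extracting $\sigma(\cU)$ from this requires discarding the \emph{first} $m$ coordinates, which is exactly what the projection axiom does not give you. Put the free factor first instead:
\[
(\R^m\times\cU)\cap\{(\y,\x):\y=\sigma(\x)\}=\{(\sigma(\x),\x):\x\in\cU\}.
\]
Then the successive projections $\Pi_{2m-1},\ldots,\Pi_m$ yield $\sigma(\cU)$.

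Second, the atoms $\epsilon>0$ and $\|\x-\y\|_2^2<\epsilon^2$ used in (i) are not algebraic sets, so ``atoms are algebraic'' does not cover them. Add the one-line observation that $\{\x:p(\x)>0\}=\Pi_k(\{(\x,t)\in\R^{k+1}:t^2p(\x)=1\})$ for any polynomial $p$ on $\R^k$. This makes semialgebraic atoms definable.

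With these two fixes your argument is complete.
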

	\begin{proof}
		See \cite[Proposition 1.12]{coste2000introduction} for (i); \cite[Lemma 2.3 (iv), p.14]{van1998tame} for (ii); \cite[Lemma 2.3 (ii), p.14]{van1998tame} for (iii); \cite[Exercise 1.10]{coste2000introduction} for (iv);  \cite[Lemma 2.3 (v), p.14]{van1998tame} for (v).
	\end{proof}
	We next introduce two important o-minimal structures constructed from globally subanalytic sets. Roughly speaking, a set $\cU\subseteq\R^n$ is subanalytic if it is locally the projection of a bounded real-analytic set; and it is globally subanalytic if it further remains stable under a certain semialgebraic map. Globally subanalytic sets strictly extend the class of semialgebraic sets but exclude, for instance, the graph of the exponential function.
	For precise definitions and further details on global subanalyticity, see \cite[p. 190]{van1986generalization}, \cite[p. 506]{van1996geometric}, and \cite[Sec. 2.1]{bolte2007lojasiewicz}.
	\begin{proposition}\label{pro:o2} The following hold:
		\begin{enumerate}[{\rm (i)}]
			\item There exists an o-minimal structure, denoted by $\cS(\R_{{\rm an}})=(\cS_n(\R_{{\rm an}}))_{n\in\N}$, such that $\cS_n(\R_{{\rm an}})$ is the collection of globally subanalytic sets in $\R^n$.
			\item There exists a smallest o-minimal structure that contains the graph of the exponential function and all globally subanalytic sets, denoted by $\cS(\R_{{\rm an, exp}})$. 
		\end{enumerate}
	\end{proposition}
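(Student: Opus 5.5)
This statement is a collection of standard facts from o-minimal geometry, so I will establish it by citation rather than by constructing the structures from scratch. For (i), I will invoke the theorem of Gabrielov and of van den Dries that the globally subanalytic sets form an o-minimal structure. Its content is that the family $\cS_n(\R_{\rm an})$ of globally subanalytic subsets of $\R^n$ satisfies the axioms of Definition \ref{def:o}, and I would check them in the following order.

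\textbf{Step 1 (closure and product axioms).} Every algebraic set is semialgebraic, and hence globally subanalytic. Boolean operations and Cartesian products preserve global subanalyticity; this follows directly from the definition via the semialgebraic compactification $x\mapsto x/\sqrt{1+\|x\|^2}$.

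\textbf{Step 2 (projection axiom).} This is the nontrivial part. Subanalytic sets are closed under proper projections by definition. For projections along an unbounded coordinate, global subanalyticity is preserved because the compactification turns any projection into the projection of a bounded set.

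\textbf{Step 3 (the one-dimensional axiom).} A subanalytic subset of $\R$ has locally finitely many connected components. Applying this at the compactified endpoints shows that a globally subanalytic subset of $\R$ is a finite union of points and intervals.

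The deepest ingredient is the complement theorem, namely that complements of subanalytic sets are subanalytic. It is needed for the Boolean-algebra property and is due to Gabrielov. I will not reprove it but cite \cite{van1986generalization,van1996geometric}, where the o-minimality of $\R_{\rm an}$ is stated explicitly.

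For (ii), existence of a smallest o-minimal structure containing the graph of $\exp$ and all globally subanalytic sets is not automatic. An intersection of o-minimal structures is again a structure satisfying the first three axioms, and the fourth axiom is inherited by substructures. However, the family of o-minimal structures containing these sets must first be shown to be nonempty.

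Nonemptiness is exactly the theorem of van den Dries, Macintyre and Marker that $\R_{\rm an,exp}$ is o-minimal, proved via quantifier elimination and model completeness in \cite{van1996geometric}. Taking the structure generated by $\exp$ and the globally subanalytic sets (the definable sets of $\R_{\rm an,exp}$) then gives the smallest such structure. This structure is well defined because it is the intersection of all structures containing the generators, and the intersection is o-minimal since it is contained in the o-minimal $\R_{\rm an,exp}$.

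The main obstacle is this o-minimality of $\R_{\rm an,exp}$, which is far beyond the scope of the paper. My plan is to state it as a cited result and only verify the minimality and intersection argument above.
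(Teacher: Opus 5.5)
Your proposal is correct and takes the same route as the paper, which also proves both items purely by citation (\cite{van1986generalization} and \cite[Sec.~4.1.2]{portales2025sequential} for (i); \cite{opris2023preparation} and \cite[Sec.~4.1.3]{portales2025sequential} for (ii)); your supplementary remarks are sound, namely the compactification argument for projections and the observation that the smallest structure generated by $\exp$ and the globally subanalytic sets is o-minimal once some o-minimal structure contains them. The one correction is bibliographic: the o-minimality of $\R_{\rm an,exp}$ was proved by van den Dries, Macintyre and Marker (Ann.\ of Math., 1994), and \cite{van1996geometric} only records this result rather than proving it.
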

	\begin{proof}
		See \cite{van1986generalization}, \cite[Sec. 4.1.2]{portales2025sequential} for (i); and \cite{opris2023preparation}, \cite[Sec. 4.1.3]{portales2025sequential} for (ii).
	\end{proof}
	We call a function (resp. globally) subanalytic if its graph is (resp. globally) subanalytic. It is well-known that globally subanalytic functions are not stable under integration. That is, the parameterized integration of a globally subanalytic function need not to be globally subanalytic; see, e.g., \cite[Sec. 4.2.4]{portales2025sequential}. Nevertheless, the integration might be definable in a larger structure like $\cS(\R_{{\rm an, exp}})$. 
	\begin{lemma}{\rm(\cite[Lemma 4.8]{portales2025sequential}).}\label{le:integ}
		Let $H:\R^m\times \cU\to\R$ be a globally subanalytic function with $\cU\subseteq\R^n$ open. Define $\hat{H}:\y\mapsto\int_{\R^m}H(\x,\y){\rm d}\x$ and suppose that $\hat{H}$ is well-defined on $\cU$. Then, $\hat{H}$ is definable in $\cS(\R_{{\rm an, exp}})$. 
	\end{lemma}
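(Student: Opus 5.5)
The plan is to deduce Lemma \ref{le:integ} from the theory of \emph{constructible functions} of Lion--Rolin and Cluckers--Miller, rather than from a direct cell-decomposition argument. Following Cluckers and Miller, call a function $\cU\to\R$ constructible if it is a finite sum of finite products of globally subanalytic functions and of functions $\y\mapsto\log G(\y)$ with $G>0$ globally subanalytic. The proof then has three steps.

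\textbf{Step 1 (the hypothesis is constructible).} Since $H$ is globally subanalytic, it is in particular constructible on $\R^m\times\cU$. Its graph is definable in $\cS(\R_{\rm an})$ by Proposition \ref{pro:o2}(i).

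\textbf{Step 2 (stability under integration).} This is the key step, and I would invoke the Cluckers--Miller stability theorem: for a constructible $H$ on $\R^m\times\cU$, the set of $\y$ where $\x\mapsto H(\x,\y)$ is integrable is itself globally subanalytic, and on that set $\y\mapsto\int_{\R^m}H(\x,\y)\,{\rm d}\x$ is again constructible. To make the source of the logarithms visible, one can reduce to $m=1$ by Fubini, integrating one variable at a time; this is legitimate because the integrability locus is definable at every stage.
\begin{itemize}
\item For $m=1$, apply the Lion--Rolin subanalytic preparation theorem in the last variable. After a globally subanalytic cell decomposition of $\R\times\cU$, on each cell $H$ becomes a finite sum of terms
\[
a(\y)\,|x-\theta(\y)|^{q}\,U(\y,x),
\]
with $q\in\Q$, $a$ and $\theta$ globally subanalytic, and $U$ a unit given by a convergent power series in bounded globally subanalytic data.
\item Integrating each term in $x$ over the cell, which has globally subanalytic endpoints, gives globally subanalytic expressions, except for the exponent $q=-1$.
\item The case $q=-1$ produces $\log$ of a globally subanalytic function. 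This is exactly why the class of constructible functions, and not merely the globally subanalytic ones, is needed.
\end{itemize}
The main obstacle is this step: the preparation has to be uniform in the parameter $\y$, and the expansion of $U$ has to be handled term by term. I would simply cite this as the Lion--Rolin / Cluckers--Miller theorem rather than reprove it.

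\textbf{Step 3 (definability in $\cS(\R_{\rm an,exp})$).} By assumption, $\hat H$ is well defined on all of $\cU$, so Step 2 shows that $\hat H$ is constructible on $\cU$. Every globally subanalytic function is definable in $\cS(\R_{\rm an,exp})$. The function $\log$ is definable there as well, since its graph is the reflection of the graph of $\exp$, and this is covered by Proposition \ref{pro:o1}(ii). Composition preserves definability by Proposition \ref{pro:o1}(v), so each $\log G$ is definable. Finally, finite sums and products are compositions of definable maps with the semialgebraic maps $(s,t)\mapsto s+t$ and $(s,t)\mapsto st$, using Proposition \ref{pro:o1}(iv)--(v). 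Hence $\hat H$ is definable in $\cS(\R_{\rm an,exp})$.
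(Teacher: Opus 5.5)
Your argument is correct and is essentially what lies behind the paper's bare citation of \cite[Lemma 4.8]{portales2025sequential}: by Lion--Rolin (or Cluckers--Miller), $\hat H$ is, on $\cU$, a finite sum of products of globally subanalytic functions and logarithms of positive globally subanalytic functions, and such functions are definable in $\cS(\R_{{\rm an,exp}})$ because $\log=\exp^{-1}$ is. Two harmless slips need fixing. First, the integrability locus of a general constructible function need not be globally subanalytic: for example, $(x_2-\log x_1)/y$ on $\{x_1>0,\,0<y<1\}$, extended by $0$, is integrable in $y$ exactly on the graph of $\log$; this claim is never needed here, since integrability on all of $\cU$ is assumed. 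Second, Proposition~\ref{pro:o1}(iv) only duplicates a single function, so forming $\y\mapsto(G_1(\y),G_2(\y))$ for sums and products should instead be justified by intersecting the two graphs, which follows immediately from the axioms.
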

	
	\section{Unified Convergence Framework}\label{sec:general}
	To generalize the convergence results in \cite{chen2026skl}, we refine the analytical tools developed therein and extend them to more general settings. In particular, recall that the scaled sufficient decrease condition associated with the Shannon entropy kernel $h_S$ (see \cite[Proposition 7(ii)]{chen2026skl}) takes the form
	\begin{equation}\label{eq:scaled_suff_example}
		F(\x^{k+1})-F(\x^k)\leq -\frac{\kappa_1}2\left\|\Diag\left(\sqrt{\x^k}\right)^{-1} \left({\x^{k+1}-\x^k}\right) \right\|^2_2.
	\end{equation}
	It is helpful to realize that the scaling matrix $\Diag(\sqrt{\x^k})^{-1}$ aligns with $\nabla^2h_S(\x^k)^{\frac12}$ and the decrease quantity in the right-hand side appears as a lower bound on the Bregman sufficient decrease quantity $D_{h_S}(\x^{k+1},\x^k)$ for the BPGM. Note that the Bregman sufficient decrease 
	\begin{equation}\label{eq:Bsd}
		F(\x^{k+1})-F(\x^k)\leq -L D_h\left(\x^{k+1},\x^k\right), 
	\end{equation}
	where $L>0$ is a known constant, has become standard for BPMs under mild conditions. A natural idea to extend \eqref{eq:scaled_suff_example} to BPMs with a general kernel $h$ is to find a lower bound on $D_h(\x^{k+1},\x^k)$ in the form of scaled distance. This motivates the following scaled sufficient decrease property for the sequence $\{F(\x^k)\}_{k\geq0}$ under the kernel $h$.
	\begin{enumerate}
		\item[{\rm (H1)}] \textbf{(Scaled Sufficient Decrease).} There exists $a>0$ such that for all $k\geq0$,
		\[F(\x^{k+1})-F(\x^k)\leq -a\left\| \nabla^2h(\x^{k+1})^{\frac12} \left(\x^{k+1}-\x^k\right)\right\|_2^2.\]
	\end{enumerate}
	Accordingly, we have the generalized version of scaled relative error condition compatible with (H1). 
	\begin{enumerate}
		\item[{\rm (H2)}] \textbf{(Scaled Relative Error).} There exists $b>0$ such that for all $k\geq0$,
		\[\dist\left(\bz, \nabla^2h(\x^{k+1})^{-\frac12}\partial F\left(\x^{k+1}\right)\right)\leq b\left\|\nabla^2h(\x^{k+1})^{\frac12} \left(\x^{k+1}-\x^k\right)\right\|_2.\]
	\end{enumerate}
	It is worthy noting that the scaling matrix in (H1), (H2) is the kernel's Hessian at $\x^{k+1}$ instead of $\x^k$, different from that in \cite{chen2026skl}. This is to facilitate the development of scaled relative error for BPMs for general $g$ in the objective of \eqref{eq:model} (see Section \ref{sec:application} for details). 
	With (H1) and (H2) in place, we next extend the SK\L{} property to a general kernel $h$.
	\begin{definition}[Extended SK\L{} Property]\label{def:general_skl}
		Fix a strongly convex kernel function $h\in\cC^2(\inter(\dom(h)))$. 
		We say that a function $H:\cl(\dom(h)) \rightarrow \overline{\R}$ has the {\rm  SK\L} property at $\x^* \in \operatorname{dom} (\partial H)$ under the kernel $h$ if there exists a scalar $\eta \in(0,+\infty]$, a neighborhood $\cU$ of $\x^*$, and a continuous concave function $\zeta:[0, \eta) \rightarrow \mathbb{R}_{+}$ such that
		\begin{enumerate}[label={{\rm (\roman*)}}]
			\item $\zeta$ is continuously differentiable on $(0, \eta)$ with $\zeta(0)=0$ and $\zeta^{\prime}>0$ over $(0, \eta)$,
			\item for all $\x\in\cU\cap\inter(\dom(h))$ with $H(\x^*)<H(\x)<H(\x^*)+\eta$, it holds that
			$$
			\zeta^{\prime}\left(H(\x)-H(\x^*)\right) \cdot\operatorname{dist}\left(\bz,  \nabla^2h(\x)^{-\frac12}\partial H(\x)\right) \geq 1.
			$$
		\end{enumerate}		
		If $H$ possesses the above SK\L\ property\footnote{A key distinction between the  SK\L{} property and the K\L{} property on Hessian-Riemannian manifold (see, e.g., \cite{bolte2003barrier,alvarez2004hessian}) lies in the treatment of boundary points. The SK\L{} property is formulated for points $\x^*\in\dom(\partial F)$, including those on $\bd(\dom(h))$, whereas the Hessian-Riemannian K\L{} property is defined only for $\x^*\in\inter(\dom(h))$, where the underlying manifold structure exists. As a result, the Hessian-Riemannian K\L{} property does not generally imply the SK\L{} property; see Example \ref{example:skl_fails}.} at every $\x^*\in\dom(\partial H)$, then we call $H$ an SK\L\ function under the kernel $h$.
	\end{definition}
	The (extended) scaled sufficient decrease (H1) and scaled relative error (H2), along with the (extended) SK\L{} property, ensure the convergence of the sequence $\{\x^k\}_{k\geq0}$ through an analysis analogous to that of \citet{attouch2013convergence}. However, due to the presence of spurious stationary points introduced in \cite{chen2025spurious}, the convergence of $\{\x^k\}_{k\geq0}$ with $\dist(\bz, \nabla^2h(\x^{k+1})^{-\frac12}\partial F(\x^{k+1}))\to0$ does not, by itself, guarantee that the limit point is stationary. This gap necessitates the introduction of the complementary condition (H3), which ensures the avoidance of spurious stationary points.
	\begin{enumerate}
		\item[{\rm (H3)}] \textbf{(Limiting Stationarity).} The convergence of $\{\x^k\}_{k\geq0}$ implies the stationarity of its limiting point, i.e., \[\x^k\to\bar{\x}~\Longrightarrow~\bz\in\partial F(\bar{\x}).\]
	\end{enumerate}
	Now, we are ready to present the main theorem of this section, which ensures the convergence of sequences satisfying the conditions (H1)---(H3).  We defer its proof to Section \ref{sec:proof_general}.
	\begin{theorem}[Convergence Framework]\label{th:general}
		Suppose that Assumption \ref{assum:general} (i) holds, and $F:\R^n\to\overline{\R}$ is an SK\L{} function under the kernel $h$ and continuous in its closed domain. Let $\{\x^k\}_{k\geq0}\subseteq \dom(\partial F)\cap\inter(\dom(h))$ be a bounded sequence satisfying {\rm (H1)---(H3)}. Then, $\{\x^k\}_{k\geq0}$ converges to a stationary point of $F$ with a finite length.
	\end{theorem}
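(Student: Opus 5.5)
The proof adapts the Attouch--Bolte--Svaiter argument \cite{attouch2013convergence} to the scaled quantities. Throughout, let $\mu>0$ be the strong convexity modulus of $\varphi$, so that $\nabla^2h(\x)\succeq\mu\bI$ on $\inter(\dom(h))$. Write
\[
d_k\coloneqq\bigl\|\nabla^2h(\x^{k+1})^{\frac12}(\x^{k+1}-\x^k)\bigr\|_2,\qquad F_k\coloneqq F(\x^k).
\]
Then $\|\x^{k+1}-\x^k\|_2\le d_k/\sqrt{\mu}$. Consequently, finite length in the scaled sense, $\sum_k d_k<\infty$, gives Euclidean finite length and hence convergence.

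\textbf{Step 1: basic properties.} By (H1), $\{F_k\}$ is nonincreasing. Since $\{\x^k\}$ is bounded, lies in the closed set $\dom(F)$, and $F$ is continuous there, $F$ is bounded below on $\cl\{\x^k\}$. Hence $F_k\downarrow F^*$, and summing (H1) gives $\sum_k d_k^2<\infty$, so $d_k\to0$. Take a cluster point $\bar\x$ of the bounded sequence. Then $\bar\x\in\dom(F)$, and continuity of $F$ gives $F(\bar\x)=F^*$.

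If $F_{k_0}=F^*$ for some $k_0$, then (H1) forces $d_k=0$ for all $k\ge k_0$. The sequence is then eventually constant, and (H3) finishes the proof. So assume $F_k>F^*$ for all $k$.

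\textbf{Step 2: the key inequality.} Invoke the SK\L{} property of $F$ at $\bar\x$. This needs $\bar\x\in\dom(\partial F)$; I discuss this point separately below. The property provides $\eta$, $\zeta$ and a neighborhood $\cU\supseteq\B(\bar\x,\rho)$.

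For $k\ge1$ with $\x^k\in\B(\bar\x,\rho)$ and $F_k<F^*+\eta$, the point $\x^k$ is interior, so the SK\L{} inequality applies. It uses the Hessian at $\x^k$, which is exactly the scaling in (H2) at index $k-1$. This gives $\zeta'(F_k-F^*)\, b\, d_{k-1}\ge1$. Combining with concavity of $\zeta$ and (H1),
\[
\Delta_k\coloneqq\zeta(F_k-F^*)-\zeta(F_{k+1}-F^*)\ \ge\ \zeta'(F_k-F^*)(F_k-F_{k+1})\ \ge\ \frac{a\,d_k^2}{b\,d_{k-1}}.
\]
By AM--GM this yields
\[
2d_k\le d_{k-1}+\tfrac{b}{a}\Delta_k .
\]
Note that the design choice in (H1)--(H2), namely that both are scaled at $\x^{k+1}$, is precisely what makes this chain close.

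\textbf{Step 3: trapping and conclusion.} Choose $K$ large enough that the following hold:
\begin{itemize}
\item $F_K<F^*+\eta$;
\item $\|\x^K-\bar\x\|_2$ is small;
\item $d_{K-1}/\sqrt\mu$ is small;
\item $\tfrac{b}{a\sqrt\mu}\,\zeta(F_K-F^*)$ is small;
\end{itemize}
with the sum of these small quantities below $\rho$. This is possible because $\bar\x$ is a cluster point and $d_k\to0$.

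Summing the inequality from Step 2 telescopically gives
\[
\sum_{k=K}^{N}d_k\le d_{K-1}+\tfrac{b}{a}\zeta(F_K-F^*).
\]
An induction on $N$ then shows $\x^{N}\in\B(\bar\x,\rho)$ for all $N\ge K$, so the inequality remains applicable at every step. Therefore $\sum_k d_k<\infty$, and $\{\x^k\}$ has finite Euclidean length. It is thus Cauchy and converges to $\bar\x$. Finally, (H3) gives $\bz\in\partial F(\bar\x)$.

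\textbf{Main obstacle.} The delicate point is applying SK\L{} at the cluster point $\bar\x$, which may lie on $\bd(\dom(h))$. The SK\L{} property is only postulated on $\dom(\partial F)$, and (H3) cannot be used before convergence is known.

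My first attempt is to argue $\bar\x\in\dom(\partial F)$ directly. In the intended setting (Assumption \ref{assum:general}(ii)), $f$ is $C^1$ and $g$ is convex and locally Lipschitz on its closed domain, so $\partial F(\x)=\nabla f(\x)+\partial g(\x)\neq\emptyset$ on all of $\dom(F)$. I would therefore record $\dom(\partial F)=\dom(F)$ as the property actually needed.

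If that is unavailable, the fallback is a uniformized SK\L{} property on the compact limit set on which $F\equiv F^*$, obtained by covering it with finitely many SK\L{} neighborhoods. That route also requires every limit point to lie in $\dom(\partial F)$, so the same membership question has to be settled in either case.
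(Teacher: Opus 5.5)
Your proposal is correct and follows essentially the same route as the paper: SK\L{} at $\x^k$, paired with (H2) at index $k-1$ and (H1) at index $k$, gives $\zeta(F_k-F^*)-\zeta(F_{k+1}-F^*)\ge \frac{a}{b}\,d_k^2/d_{k-1}$, followed by AM--GM, telescoping, an induction trapping the iterates in the SK\L{} ball, and (H3) for stationarity; the differences are cosmetic (you make $d_{K-1}$ small via $d_k\to0$ and use the global modulus of $\varphi$, whereas the paper bounds $d_0\le\sqrt{(F(\x^0)-F(\x^*))/a}$ by (H1) and uses a local $\sigma$). The membership question $\bar{\x}\in\dom(\partial F)$ that you raise is real and the paper passes over it, simply invoking the SK\L{} property at the accumulation point; your fix via Assumption~\ref{assum:general}(ii), which gives $\dom(\partial F)=\dom(F)$, covers the BPPM/BPGM applications but relies on a hypothesis absent from the theorem as stated, which assumes only Assumption~\ref{assum:general}(i).
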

	Theorem \ref{th:general} provides a powerful approach for establishing iterate convergence in Bregman geometry: For BPMs and an SK\L{} objective function, if we can verify all three conditions (H1)---(H3) for the generated sequences, then they converge to stationary points. Particularly, it substantially extends the classical convergence result of \citet{attouch2013convergence} in Euclidean geometry and the limited result of \cite{chen2026skl} for Shannon entropy kernel. Nevertheless, before applying the framework to the usual BPMs such as the BPPM and BPGM, a primary question is how general the extended  SK\L{} property is under kernels satisfying Assumption \ref{assum:general} (i). One may expect that the extended  SK\L{} property still holds for all continuous subanalytic functions. Curiously, the result is negative: Even when $F$ is a simple semi-algebraic function, the extended SK\L{} property of $F$ may fail, as illustrated by the following example. 
	\begin{example}[Failure of Extended SK\L{} Property]\label{example:skl_fails}
		Consider the Burg entropy kernel $h(x)=-\log(x)$, the function $F:x\mapsto x+\delta_{\R_+}(x)$, and the boundary point $x^*=0$. We have
		\[  F(x)-F(x^*)=x=\left|h^{\prime\prime}(x)^{-\frac12} F^{\prime}(x)\right|,   \qquad\forall~x>0. \]
		We note that $F$ does not satisfy the SK\L{} property at $\x^*$, i.e., there does not exist a function $\zeta:[0,\eta)\to\R_+$ continuously differentiable on $(0,\eta)$ with $\zeta(0)=0$, $\eta>0$  such that 
		\[\zeta^{\prime}\left(F(x)-F(x^*)\right)\cdot \left|h^{\prime\prime}(x)^{-\frac12} F^{\prime}(x)\right|=\zeta^{\prime}(x)\cdot x\geq1,\qquad \forall~x\in(0,\eta). \]
		Because this inequality would imply
		\[\zeta(x)=\zeta(0)+\int_0^{x}\zeta^{\prime}(s){\rm d}s\geq \int_0^{x}\frac1s{\rm d}s=+\infty,\qquad \forall~x\in(0,\eta),\]
		which leads to a contradiction.
	\end{example}
	The failure of extended SK\L{} property in the above example can be traced to the fact that $|h^{\prime\prime}(x)^{-\frac12} F^{\prime}(x)|$ becomes too small; or equivalently, $h^{\prime\prime}(x)= 1/x^2$ grows excessively near the boundary point $x^*=0$. In comparison,  for the Shannon entropy kernel considered in \cite{chen2026skl}, where $h_S(\x)=\sum_{i=1}^n\varphi_S(x_i)$ with $\varphi_S(x_i)=x_i\log(x_i)$, the second-order derivative $\varphi_S^{\prime\prime}(x_i)=1/x_i$ exhibits moderate growth near the boundary $x^*_i=0$, under which the SK\L{} property broadly holds. This motivates the search for structural conditions on kernels that ensure the validity of the extend SK\L{} property. As it turns out, the closedness of $\dom(h)$ is sufficient.
	\begin{proposition}\label{pro:skl_general}
		Suppose that Assumption \ref{assum:general} (i) holds with $\dom(\varphi)$ closed. Then, every proper, subanalytic function $H:\cl(\dom(h))\to\overline{\R}$ that is continuous on $\dom(H)$ is an SK\L{} function under the kernel $h$.
	\end{proposition}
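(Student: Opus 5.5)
We reduce the scaled K\L{} property to an ordinary K\L{} property through a change of variables built from the kernel. Set
\[
\psi(x):=\int_{x_0}^x\sqrt{\varphi''(s)}\,{\rm d}s ,
\]
with $x_0\in\inter(\dom(\varphi))$ fixed. Since $\dom(\varphi)$ is closed, Corollary \ref{co:closed} shows that $\psi$ is finite on all of $\dom(\varphi)$. It is also continuous and strictly increasing, so it is a homeomorphism of $\dom(\varphi)$ onto a closed interval $J$. Let $\phi:=\psi^{-1}:J\to\dom(\varphi)$, applied coordinatewise.

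On $\inter(J)$ we have $\phi'(y)=1/\sqrt{\varphi''(\phi(y))}=\sqrt{\hat\varphi(\phi(y))}$. Hence $\phi'$ extends continuously to all of $J$, taking the value $0$ at the endpoints, so $\phi\in\cC^1(J)$. For $\y\in\inter(J^n)$ and $\x=\phi(\y)$ we get $\nabla^2h(\x)^{-\frac12}=\Diag(\phi'(\y))=\nabla\phi(\y)$. Consider $G:=H\circ\phi$ on $J^n$. At interior points $\phi$ is a $\cC^1$ diffeomorphism, so the chain rule for limiting subdifferentials applies:
\[
\partial G(\y)=\Diag(\phi'(\y))\,\partial H(\x)=\nabla^2h(\x)^{-\frac12}\partial H(\x).
\]
Therefore $\dist(\bz,\nabla^2h(\x)^{-\frac12}\partial H(\x))=\dist(\bz,\partial G(\y))$, and $G(\y)-G(\y^*)=H(\x)-H(\x^*)$ with $\y^*=\psi(\x^*)$. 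A neighborhood of $\y^*$ maps onto a neighborhood of $\x^*$ because $\phi$ is a homeomorphism. Consequently the ordinary K\L{} inequality for $G$ at $\y^*$, restricted to interior points, is exactly the SK\L{} inequality for $H$ at $\x^*$. One caveat: if $\x^*\in\dom(\partial H)$ but $\y^*\notin\dom(\partial G)$, we will not need stationarity of $\y^*$. Instead we use the standard consequence of definability, namely a desingularizing function on a neighborhood, valid for all points with $G(\y^*)<G(\y)<G(\y^*)+\eta$. This form of the \L{}ojasiewicz/Kurdyka inequality holds at every point of the closed domain for continuous definable functions (Bolte--Daniilidis--Lewis), and it is trivial away from critical values.

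It then suffices to show that $G$ is definable in some o-minimal structure and continuous on its closed domain $\phi^{-1}(\dom(H))$. Continuity is immediate from the homeomorphism property. For definability, note that $\sqrt{\varphi''}=\hat\varphi^{-1/2}$ on the interior, which is globally subanalytic by Assumption \ref{assum:general}(i). Then
\[
\psi(x)=\int_{\R}\mathbbm 1_{[x_0,x]}(s)\,\hat\varphi(s)^{-1/2}\,{\rm d}s ,
\]
with a sign convention for $x<x_0$, is a parameterized integral of a globally subanalytic integrand. By Lemma \ref{le:integ}, $\psi$ is definable in $\cS(\R_{\rm an,exp})$ on open sets, and Proposition \ref{pro:o1} extends this to the closure. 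By Proposition \ref{pro:o1}(ii) the inverse $\phi$ is definable as well, and so is its coordinatewise product map.

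Combining this with $H$ requires $H$ to be definable in the same structure. Here I would take "subanalytic" to mean globally subanalytic; otherwise I would first localize. Since the SK\L{} property is local, we can intersect the graph of $H$ with a bounded box around $\x^*$, and a subanalytic set restricted to a bounded box is globally subanalytic. By Proposition \ref{pro:o1}(v), $G=H\circ\phi$ is then definable in $\cS(\R_{\rm an,exp})$. The K\L{} property for lsc definable functions in an o-minimal structure (Bolte--Daniilidis--Lewis--Shiota) supplies $\zeta$, concave and $\cC^1$, and transporting it back through $\phi$ gives the SK\L{} property at every $\x^*\in\dom(\partial H)$.

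The main obstacle is the boundary behaviour of the reparameterization. We need $\phi$ to be $\cC^1$ up to the boundary with $\phi'=\sqrt{\hat\varphi\circ\phi}$, and we need $\psi$ to stay finite; this is precisely where closedness of $\dom(\varphi)$ enters, through Corollary \ref{co:closed}. We also need to carry out the definability of the integral carefully at the closed boundary.
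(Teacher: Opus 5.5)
Your argument is correct, but it takes a genuinely different route from the paper's proof of this proposition.

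\textbf{The paper's route.} The paper deliberately avoids composing with $\phi$, remarking that $H\circ\phi$ need not be subanalytic. Instead it composes $H$ with an explicit semialgebraic power map $G$: $G_i(y)=x^*_i+y$ on interior coordinates, and $G_j(y)=x^*_j\pm y^{\vartheta^*_j}$ with rational $\vartheta^*_j\geq\vartheta_j$ on boundary coordinates. Then $H\circ G$ is subanalytic, and the Bolte--Daniilidis--Lewis \L{}ojasiewicz inequality applies at $\bz$. The boundary asymptotics of Proposition \ref{pro:estimate} give $\varphi''(x_j)^{-\frac12}\geq b\,|G_j'(y_j)|$. This converts the inequality for $H\circ G$ into the SK\L{} inequality up to the constant $b$.

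\textbf{Your route.} You compose with the exact parameterization. So $\partial(H\circ\phi)(\y)=\nabla^2h(\x)^{-\frac12}\partial H(\x)$ is an identity, and SK\L{} at $\x^*$ is literally K\L{} of $H\circ\phi$ at $\psi(\x^*)$. The cost is that you must leave subanalytic geometry. You need three ingredients:
\begin{enumerate}
\item the definability of $\phi$ in $\cS(\R_{\rm an,exp})$, which is exactly the paper's Proposition \ref{pro:parameterization2}(i), proved the same way through Lemma \ref{le:integ};
\item a localization to make $H$ globally subanalytic, which is legitimate when $\dom(H)$ is closed, since $H$ is then bounded on a compact box and the truncated graph is a bounded subanalytic set;
\item the o-minimal K\L{} theorem.
\end{enumerate}

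\textbf{What each buys.} The paper's route needs only subanalytic tools. It also yields a power desingularizer $\zeta(s)=\frac{c}{b(1-\eta)}s^{1-\eta}$, i.e.\ a \L{}ojasiewicz exponent, which is useful for rates. In the non-polynomially-bounded structure $\cS(\R_{\rm an,exp})$, your $\zeta$ is only known to be definable.

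In exchange, your route bypasses the two-sided estimates of Propositions \ref{pro:estimate} and \ref{pro:parameterization}(ii) and the rational-exponent approximation; you use only the finiteness and continuity of $\psi$ from Corollary \ref{co:closed}. It is also the same mechanism the paper later uses for the mirror flow. Moreover, it shows that the paper's worry that $H\circ\phi$ might fail the K\L{} property does not materialize once Proposition \ref{pro:parameterization2}(i) is available.

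\textbf{Two small points.} First, you do not need $\phi\in\cC^1$ up to the boundary, since the chain rule is only used at interior points; so that is not really the main obstacle. Second, like the paper, you tacitly need $\dom(H)$ closed to get lower semicontinuity for the K\L{} theorem.
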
 
	Proposition \ref{pro:skl_general} provides a solid basis for our convergence framework, particularly when the kernel has a closed domain. Its proof relies on the  parameterization functions associated with the kernels, as detailed in Sections \ref{sec:parameterization} and \ref{sec:proof_skl_general}. Readers who are more interested in iterate convergence of BPPM and BPGM may directly go to the next section.
	\subsection{Toolbox: Parameterization Functions} \label{sec:parameterization}
	We introduce the following parameterization functions for analyzing the extended SK\L{} property.
	\begin{definition}\label{def:psi}
		Given a separable kernel  $h:\x\mapsto\sum_{i=1}^n\varphi(x_i)$, 
		we define its inverse parameterization function $\psi:\cl(\dom(\varphi))\to\overline{\R}$ and  parameterization function $\phi:\range(\psi)\to\cl(\dom(\varphi))$ by
		\begin{equation}\label{eq:psi}
			\begin{aligned}
				\psi(x)&=\int_{x_0}^x\sqrt{\varphi^{\prime\prime}(s)}~{\rm d}s+c,\quad &x&\in\cl(\dom(\varphi));\\
				\phi(y)&=\psi^{-1}(y),\quad &y&\in\range(\psi),
			\end{aligned}\tag{$\Psi$}
		\end{equation}
		where  $c\in\R$ and $x_0\in\{y\in\dom(\varphi):|\int_{y}^x\sqrt{\varphi^{\prime\prime}(s)}~{\rm d}s|<+\infty,~\forall~x\in\inter(\dom(\varphi))\}$. 
	\end{definition}
	Clearly, the definition of the parameterization function $\phi$ is closely related to the SK\L{} property under the kernel $h$. Through a subdifferential chain rule, one may interpret the SK\L{} property of $H$ under $h$ as the K\L{} property of the composite function $H\circ\phi$, and hence utilize the K\L{} theory to prove Proposition \ref{pro:skl_general}.  Before proceeding with this analysis, we calculate several examples of parameterization functions to illustrate the accessibility of Definition \ref{def:psi}. 
	\begin{example}\label{example:psi} Let $x_0=0$, $c=0$ in \eqref{eq:psi} for Example \ref{example:h} (i)---(iii)\footnote{We omit the parameterization functions for the Hellinger entropy kernel since they involve elliptic integrals and do not admit simple forms.}; and $x_0=1$, $c=0$ for Example \ref{example:h} (v). The resulting expressions of $\psi$, $\phi$ are as follows:
		\begin{enumerate}[label={{\rm (\roman*)}}]
			\item For the Shannon entropy kernel  $h(\x)=\sum_{i=1}^nx_i\log(x_i)$,
			\[\begin{aligned}
				\psi(x)&=\int_{0}^x \frac{{\rm d}s}{\sqrt{s}}=\sqrt{x},\qquad\qquad &x\in\R_+;\\
				\phi(y)&=\psi^{-1}(y)=y^2,\qquad\qquad &y\in\R_+. 
			\end{aligned}
			\]
			\item For the fractional power kernel $h(\x)=\sum_{i=1}^npx_i-\frac{x_i^p}{1-p}$ ($0<p<1$),
			\[\begin{aligned}
				\psi(x)&=\int_{0}^x \frac{\sqrt{p}~{\rm d}s}{s^{1-\frac12p}}=\frac{2}{\sqrt{p}}x^{\frac{p}2},\qquad &x\in\R_+;\\
				\phi(y)&=\psi^{-1}(y)=\left(\frac{\sqrt{p}}{2}y\right)^{\frac2p},\qquad &y\in\R_+.
			\end{aligned}
			\]
			\item  For the Fermi–Dirac entropy kernel  	$h(\x)=\sum_{i=1}^nx_i\log(x_i)+(1-x_i)\log(1-x_i)$, 
			\[\begin{aligned}
				\psi(x)=&\int_{0}^x \frac{{\rm d}s}{\sqrt{s(1-s)}}=2\arcsin\left(\sqrt{x}\right),\quad &x\in[0,1];\\
				\phi(y)=&\psi^{-1}(y)=\sin^2\left(\frac{y}2\right),\quad &y\in[0,\pi].
			\end{aligned}
			\] 
			\item For the Burg entropy kernel
			$h(\x)=\sum_{i=1}^n-\log(x_i)$, 
			\[\begin{aligned}
				\psi(x)&=\int_{1}^x \frac{{\rm d}s}{s}=\log(x),\qquad\qquad &x&\in\R_{++};\\
				\phi(y)&=\psi^{-1}(y)=\exp(y),\qquad\qquad &y&\in\R. 
			\end{aligned}
			\]
		\end{enumerate}
	\end{example}
	The above examples suggest that $\psi,\phi$ are (or can be approximated by) power functions near the boundary when $\dom(h)$ is closed. Indeed, under the closedness of $\dom(h)$, we have the following properties for $\psi$ and $\phi$, which will find useful for the proof of Proposition \ref{pro:skl_general}. 
	\begin{proposition}\label{pro:parameterization}
		Suppose that Assumption \ref{assum:general} (i) holds with $\dom(h)$ closed. Let $\psi,\phi$ be parameterization functions for the kernel $h:\x\mapsto\sum_{i=1}^n\varphi(x_i)$ as in Definition \ref{def:psi}. Then, we have:
		\begin{enumerate}[label={{\rm (\roman*)}}]
			\item $\psi$ and $\phi$ are well-defined, continuous, and strictly increasing on $\dom(\varphi)$ and $\range(\psi)$, respectively.
			\item For any $\bar{y}\in\bd(\dom(\phi))$, there is a unique $\vartheta_{\bar{y}}\in[2,+\infty)$ such that for $y\in\dom(\phi)$ near $\bar{y}$,\[|\phi(y)-\phi(\bar{y})|=\Theta\left(|y-\bar{y}|^{\vartheta_{\bar{y}}}\right).\]
			In particular, $\vartheta_{\bar{y}}=\frac{2\theta_{\bar{x}}-2}{2\theta_{\bar{x}}-3}$ with $\bar{x}=\phi(\bar{y})$ and $\theta_{\bar{x}}\in (\frac32,2]$ as defined in Proposition \ref{pro:estimate}.
		\end{enumerate}
	\end{proposition}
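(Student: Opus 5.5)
For (i), I will first argue that $\psi$ is finite on all of $\cl(\dom(\varphi))=\dom(\varphi)$. Since $\dom(h)$ is closed, $\dom(\varphi)$ is a closed interval. By Corollary \ref{co:closed}, $\int_y^x\sqrt{\varphi''(s)}\,{\rm d}s$ is finite for all $x,y\in\dom(\varphi)$. Hence the set of admissible base points $x_0$ in Definition \ref{def:psi} is all of $\dom(\varphi)$, and $\psi$ is real-valued. Continuity of $\psi$ then follows from absolute continuity of the (improper but convergent) integral. Strict monotonicity follows because $\varphi''>0$ on $\inter(\dom(\varphi))$, by strong convexity. Consequently $\psi$ is a continuous strictly increasing bijection from the interval $\dom(\varphi)$ onto the interval $\range(\psi)$. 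Its inverse $\phi$ is therefore well-defined, continuous and strictly increasing, by the standard inverse-function fact for monotone continuous maps on intervals.

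For (ii), take $\bar y\in\bd(\dom(\phi))=\bd(\range(\psi))$. By monotonicity and continuity, $\bar x:=\phi(\bar y)$ is an endpoint of $\dom(\varphi)$, hence $\bar x\in\bd(\dom(\varphi))$. Proposition \ref{pro:estimate} then supplies $\theta:=\theta_{\bar x}$, and Corollary \ref{co:closed} gives $\theta\in(\tfrac32,2]$. Writing $\beta=\tfrac{1}{\theta-1}\in[1,2)$, we have $\varphi''(s)=\Theta(|s-\bar x|^{-\beta})$ near $\bar x$. Integrating,
\[
|\psi(x)-\psi(\bar x)|=\left|\int_{\bar x}^x\sqrt{\varphi''(s)}\,{\rm d}s\right|=\Theta\!\left(|x-\bar x|^{1-\beta/2}\right),
\]
where the exponent $1-\beta/2=\frac{2\theta-3}{2\theta-2}$ lies in $(0,\tfrac12]$. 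The $\Theta$ constants pass through the integral because both the upper and the lower bound on $\varphi''$ hold on a one-sided neighborhood, and the integrand keeps a fixed sign.

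Next I substitute $x=\phi(y)$, so that $y=\psi(x)$ and $\bar y=\psi(\bar x)$. This gives $|y-\bar y|=\Theta(|\phi(y)-\phi(\bar y)|^{(2\theta-3)/(2\theta-2)})$ for $y$ near $\bar y$. Continuity of $\phi$ ensures that $y$ near $\bar y$ corresponds to $x$ near $\bar x$. Inverting the power yields
\[
|\phi(y)-\phi(\bar y)|=\Theta\!\left(|y-\bar y|^{\vartheta}\right),\qquad \vartheta=\frac{2\theta-2}{2\theta-3}.
\]
Since $\theta\mapsto\frac{2\theta-2}{2\theta-3}$ is decreasing on $(\tfrac32,2]$ with value $2$ at $\theta=2$ and limit $+\infty$ as $\theta\downarrow\tfrac32$, we get $\vartheta\in[2,+\infty)$.

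Uniqueness of $\vartheta$ is immediate: two distinct exponents cannot both satisfy a $\Theta$-relation as $y\to\bar y$, because their ratio $|y-\bar y|^{\vartheta_1-\vartheta_2}$ tends to $0$ or $\infty$. The main point needing care is the integration step near an endpoint. The $\Theta$ bound from Proposition \ref{pro:estimate} holds only on a neighborhood, so I would fix a small one-sided interval on which $c_1|s-\bar x|^{-\beta}\le\varphi''(s)\le c_2|s-\bar x|^{-\beta}$ and integrate these two bounds explicitly. Integrability at $\bar x$ is exactly the condition $\beta<2$, i.e. $\theta>\tfrac32$.
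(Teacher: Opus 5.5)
Your proposal is correct and follows the paper's proof essentially step for step. You get finiteness and strict monotonicity of $\psi$ from Corollary \ref{co:closed} and $\varphi''>0$. You then integrate the two-sided estimate of Proposition \ref{pro:estimate} near $\bar x$ to obtain $|\psi(x)-\psi(\bar x)|=\Theta(|x-\bar x|^{(2\theta-3)/(2\theta-2)})$, and invert via $x=\phi(y)$ to reach $\vartheta=\frac{2\theta-2}{2\theta-3}\in[2,+\infty)$. Your explicit uniqueness argument and your careful one-sided integration of the bounds are details the paper leaves implicit.
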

	\begin{proof}
		(i) By the closedness of $\dom(h)$, we know that $\dom(\varphi)$ is closed. Then, Corollary \ref{co:closed} yields $|\int^{x}_{y}\sqrt{\varphi^{\prime\prime}(s)}~{\rm d}s| <+\infty$ for all $x,y\in\dom(\varphi)$, which ensures the well-definedness and finiteness of $\psi(x)$ for $x\in\dom(\varphi)$. Moreover, since $\varphi^{\prime\prime}$ is positive and continuous on $\inter(\dom(\varphi))$ by Definition \ref{def:kernel} (ii),  we see that $\psi$ is strictly increasing and continuous from the definition \eqref{eq:psi}. It follows that its inverse function $\phi=\psi^{-1}$ is also well-defined, continuous, and strictly increasing. 
		
		(ii) Let $\bar{x}=\phi(\bar{y})$. By $\bar{y}\in\bd(\dom(\phi))$ and strict increasing of $\phi$, we also have $\bar{x}\in \bd(\range(\phi))=\bd(\dom(\varphi))$. Then, Proposition \ref{pro:estimate} and Corollary \ref{co:closed} ensure that there exists $\theta_{\bar{x}}\in(\frac32,2]$ and $\Delta>0$ such that 
		\begin{equation*}
			\varphi^{\prime\prime}(x)=\Theta\left(|x-\bar{x}|^{-\frac1{\theta_{\bar{x}}-1}} \right)\quad {\rm on }\quad[\bar{x}_j-\Delta,\bar{x}_j+\Delta]\cap \inter(\dom(\varphi)).
		\end{equation*}
		This, together with $|\psi(x)-\psi(\bar{x})|=|\int_{\bar{x}}^x\sqrt{\varphi^{\prime\prime}(s)}{\rm d}s|$ implies that on $[\bar{x}-\Delta,\bar{x}+\Delta]\cap \inter(\dom(\varphi))$,
		\[ |\psi(x)-\psi(\bar{x})|=\Theta\left(\left|\int_{\bar{x}}^x\left| s-\bar{x}  \right|^{-\frac1{2(\theta_{\bar{x}}-1)}}{\rm d}s\right| \right)=\Theta\left(\left|x-\bar{x}  \right|^{\frac{2\theta_{\bar{x}}-3}{2\theta_{\bar{x}}-2}} \right) \]
		Let $x=\phi(y)$ in the above equation and recall $\bar{x}=\phi(\bar{x})$. Using $\phi=\psi^{-1}$, we obtain $|y-\bar{y}|=\Theta(|\phi(y)-\phi(\bar{y})|^{\frac{2\theta_{\bar{x}}-3}{2\theta_{\bar{x}}-2}})$, or equivalently
		\[ \left|\phi(y)-\phi(\bar{y})\right|=\Theta\left(|y-\bar{y}|^{\frac{2\theta_{\bar{x}}-2}{2\theta_{\bar{x}}-3}} \right)\text{ on }\quad [\psi(\bar{y}-\Delta),\psi(\bar{y}+\Delta)]\cap \inter(\dom(\phi)). \]
		Let $\vartheta_{\bar{y}}=\frac{2\theta_{\bar{x}}-2}{2\theta_{\bar{x}}-3}$. We have $\vartheta_{\bar{y}}\in[2,+\infty)$ by $\theta_{\bar{x}}\in(\frac32,2]$. Notice that (i) $[\psi(\bar{y}-\Delta),\psi(\bar{y}+\Delta)]$ is a neighborhood of $\bar{x}$ by the strict increasing of $\psi$; and  (ii) both sides equal zero when $y=\bar{y}$. The above equation completes the proof.

	\end{proof}
	
	\subsection{Proof of Proposition \ref{pro:skl_general}}\label{sec:proof_skl_general}
	Fix the kernel $h:\x\mapsto\sum_{i=1}^n\varphi(x_i)$ and let $\psi,\phi$ be the parameterization functions for $\varphi$ as defined in Definition \ref{def:psi}.	We prove the SK\L{} property of $H$ under $h$ by considering the K\L{} property of an appropriately constructed composite function. Since there is no guarantee that $\phi$ is subanalytic, the naive construction $H\circ\phi$ may neither be subanalytic nor satisfy the K\L{} property. Instead, our strategy is to compose $H$ with a power function based on the estimation in Proposition \ref{pro:parameterization} (ii).
	
	To begin, for an arbitrary $\x^*\in\dom(\partial F)$, we define the interior, left-end, and right-end index sets as follows:
	\[\begin{aligned}
		\cI&:= \left\{i:x^*_i\in\inter(\dom(\varphi))\right\};\\
		\cJ_L&:= \left\{j:x^*_j\in\argmin_{x\in\dom(\varphi)}x \right\}; \\
		\cJ_R&:= \left\{j:x^*_j\in\argmax_{x\in\dom(\varphi)}x \right\}. 
	\end{aligned}
	\]
	Then, Proposition \ref{pro:estimate} and Corollary \ref{co:closed} ensure that for $j\in\cJ_L(\x^*)\cup\cJ_R(\x^*)$, there exists $\theta_j\in(\frac32,2]$ and $\Delta_j>0$ such that 
	\begin{equation}\label{eq:theta}
		\varphi^{\prime\prime}(x)=\Theta\left(|x-x^*_j|^{-\frac1{\theta_j-1}} \right)\quad {\rm on }\quad[x^*_j-\Delta_j,x^*_j+\Delta_j]\cap \inter(\dom(\varphi)).
	\end{equation}
	Let $\y^*=\phi^{-1}(\x^*)=\psi(\x^*)$. By the strict increasing property of $\psi$, $\cI$, $\cJ_L$, and $\cJ_R$ are also the interior, left-end, and right-end index sets of $\y^*$, respectively.
	By Proposition \ref{pro:parameterization} (ii), for $j\in\cJ_L(\x^*)\cup\cJ_R(\x^*)$, there exists $\vartheta_j=\frac{2\theta_j-2}{2\theta_j-3}\in[2,+\infty)$ such that for $y_j\in\dom(\phi)$ near $y^*_j$,
	\[|\phi(y_j)-\phi(y^*_j)|=\Theta\left(|y_j-y^*_j|^{\vartheta_j}\right).\]
	Consider $\vartheta^*_j=\frac{2\theta^*_j-2}{2\theta^*_j-3}\in[2,+\infty)\cap\Q$ with $\vartheta^*_j\geq\vartheta_j$ and $\frac32<\theta^*_j\leq\theta_j$ for $j\in\cJ_L(\x^*)\cup\cJ_R(\x^*)$. We define the composite function $\Phi:\R^n\to\overline{\R}$ by $\Phi(\y)=H(G(\y))$ with $G$ given by
	\[\begin{aligned}
		G_i(y)&=x^*_i+y,\qquad&\forall~y\in\R,~&i\in\cI(\x^*);\\
		G_j(y)&= x^*_j+y^{\vartheta^*_j },\qquad&\forall~y\geq0,~&j\in\cJ_L(\x^*);\\
		G_j(y)&= x^*_j-y^{\vartheta^*_j},\qquad&\forall~y\geq0,~&j\in\cJ_R(\x^*).\\
	\end{aligned}
	\]
	Since $\vartheta^*_j\in\Q$, the graph of $G$ can be characterized by several polynomial equalities. It follows that $G$ is a semi-algebraic function, and thus subanalytic. Since $F$ is also subanalytic and $G$ is continuous, by \cite[(p5), p. 597]{facchinei2003finite}, the composite function $\Phi$ is subanalytic. Moreover, $\Phi$ is continuous on $\dom(\Phi)$ since $F$ and $G$ are both continuous. 
	
	Now, apply Theorem \cite[Theorem 3.1 and Remark 3.2]{bolte2007lojasiewicz}. We see that there exists an exponent $\eta\in[0,1)$, a scalar $c>0$, and a neighborhood $\cV$ of $\y^*=\bz$ such that 
	\begin{equation}\label{eq:H_kl}
		\left|\Phi(\y)-\Phi(\bz)\right|^{\eta}\leq c\cdot \dist\left(\bz,\partial \Phi(\y) \right)\qquad\forall~\y\in\cV,
	\end{equation}
	where we use the convention $\dist(\bz,\emptyset)=+\infty$ and $\infty\leq c^{\prime}\cdot\infty$ for all $c^{\prime}>0$.
	
	We then deduce the extended SK\L{} inequality of $H$ from \eqref{eq:H_kl}.
	Note that $G_i$ is a continuous surjective onto $\dom(h)$ and strictly increasing/decreasing. We know that $G^{-1}$ is well-defined and continuous on $\dom(h)$. Moreover, since $G(\bz)=\x^*$, there is a neighborhood $\cU\subseteq\dom(h) $ of $\x^*$ such that $G^{-1}(\cU)\subseteq\cV$.  
	
	Then, for every $\x\in\cU\cap\inter(\dom(h))$, let $\y=G^{-1}(\x)$ in \eqref{eq:H_kl}. Notice that $\y\in\cV$ and $\Phi(\bz)=H(G(\bz))=H(\x^*)$. We obtain
	\begin{equation}\label{eq:FH}
		\left|H(\x)-H(\x^*)\right|^{\eta}\leq c\cdot \dist\left(\bz,\partial \Phi(\y) \right),\qquad\forall~\x\in\cU\cap\inter(\dom(h)),~\y=G^{-1}(\x).
	\end{equation}
	Observe that when $x_j\in\inter(\dom(\varphi))$ and $y_j=G^{-1}_j(x_j)$, for $j\in\cJ_L(\x^*)\cup\cJ_R(\x^*)$, we have $y_j>0$ and $|G_j^{\prime}(y_j)|>0$; for  $i\in\cI(\x^*)$, we have $G_i^{\prime}(y_i)=1$. It follows that $\nabla G(\y)=\Diag(G_1^{\prime}(y_1),\ldots,G_n^{\prime}(y_n))$ is of full rank for $\y=G^{-1}(\x)$, $\x\in\inter(\dom(h))$. Then, by the subdifferential chain rule \cite[Exercise 10.7]{rockafellar2009variational}, we have
	\[\partial \Phi(\y) =\Diag\left(G_1^{\prime}(y_1),\ldots,G_n^{\prime}(y_n)\right)\partial H(\x),\quad\forall~\x\in\inter(\dom(h)),~\y=G^{-1}(\x). \]
	This, together with \eqref{eq:FH}, implies that for $\x\in\cU\cap\inter(\dom(h)),~\y=G^{-1}(\x)$,
	\begin{equation}\label{eq:FG}
		\left|H(\x)-H(\x^*)\right|^{\theta^*}\leq c\cdot \dist\left(\bz,\Diag(G_1^{\prime}(y_1),\ldots,G_n^{\prime}(y_n))\partial H(\x)\right).
	\end{equation}
	It is left to find an upper bound on the right-hand side. WLOG, we may assume that $\cU$ is closed and $\cU\subseteq\B(\x^*,\Delta^*)$ for a sufficient small $\Delta^*>0$ so that for $\x\in\cU\cap\inter(\dom(h))$, 
	\[\begin{aligned}
		&x_i\in\inter(\dom(\varphi)),\qquad&\forall~ i\in\cI;\\
		&x_j\in[x^*_j-\Delta_j,x^*_j+\Delta_j]\cap \inter(\dom(\varphi)),\qquad&\forall~j\in\cJ_L\cup\cJ_R.
	\end{aligned}\]
	Then, the estimate \eqref{eq:theta} and $\theta_j\geq \theta^*_j>\frac32$, together with the continuity of $\varphi^{\prime\prime}$ on $\inter(\dom(\varphi))$,  implies that there is a lower bound $a>0$ such that for all $\x\in\cU\cap\inter(\dom(h))$ and $\y=G^{-1}(\x)$,
	\begin{equation*} 
		\begin{aligned}
			&\varphi^{\prime\prime}(x_i)^{-\frac12}\geq a,\quad  &\forall~i\in\cI;\qquad
			&\varphi^{\prime\prime}(x_j)^{-\frac12}\geq a\cdot|x-x^*_j|^{\frac{1}{2(\theta^*_j-1)}},\quad  &\forall~j\in\cJ_L\cup\cJ_R,\\
		\end{aligned}
	\end{equation*}
	Recall that (i) $|G^{\prime}_i(y_i)|=1$ for $i\in\cI$; (ii) $|x_j-x^*_j|=|G_j(y_j)-x^*_j|=y_j^{\vartheta^*_j}=y_j^{\frac{2\theta^*_j-2}{2\theta^*_j-3}}$ for $j\in\cJ_L\cup\cJ_R$. We further have
	\begin{equation}\label{eq:boundvarphi}
		\begin{aligned}
			&\varphi^{\prime\prime}(x_i)^{-\frac12}\geq a\cdot \left|G^{\prime}_i(y_i)\right|,\quad  &\forall~i\in\cI;\\
			&\varphi^{\prime\prime}(x_j)^{-\frac12}\geq a\cdot y_j^{\frac{1}{2\theta^*_j-3}}=\frac{a}{\vartheta^*_j}\cdot \left|G^{\prime}_j(y_j)\right|,\quad  &\forall~j\in\cJ_L\cup\cJ_R,\\
		\end{aligned}
	\end{equation}
	
	Let $b=\min\{a,a/\vartheta_j^*:j\in\cJ_L\cup\cJ_R\}$, which is positive since $a>0$ and $\vartheta^*_j\geq2$. We combine \eqref{eq:FG} and \eqref{eq:boundvarphi} to yield
	\[\begin{aligned}
		\left|H(\x)-H(\x^*)\right|^{\eta}&\leq \frac{c}{b}\cdot \dist\left(\bz,\Diag\left(\varphi^{\prime\prime}(x_1)^{-\frac12},\ldots,\varphi^{\prime\prime}(x_n)^{-\frac12}\right)\partial H(\x)\right)\\
		&= \frac{c}{b}\cdot\dist\left(\bz,\nabla^2h(\x)^{-\frac12}\partial H(\x)\right),
		\qquad\forall~\x\in\cU\cap\inter(\dom(h)).
	\end{aligned}\]
	This says that $H$ satisfies the SK\L{} property at $\x^*$ with the desingularizing function $\zeta:x\mapsto \frac{c}{b(1-\eta)}x^{1-\eta}$, where $\eta\in[0,1)$. Since $\x^*\in\dom(\partial H)$
	is arbitrary, we conclude that $F$ is an SK\L{} function under the kernel $h$.
	\subsection{Proof of Theorem \ref{th:general}}\label{sec:proof_general}
	Let $\x^*$ be an accumulation point of the bounded sequence $\{\x^k\}_{k\geq0}$, i.e.,
	there exists a subsequence $ \{\x^{k_t}\}_{t\geq0} $ such that $ \x^{k_t} \to \x^* $. 
	Since the function $F$ is continuous, the subsequence convergence directly yields $ F(\x^{k_t}) \to F(\x^*) $. 
	We observe that $F(\x^k)$ admits a monotone decreasing property from the scaled sufficient decrease condition (H1). Combined with the subsequence convergence, this yields 
	\begin{equation}\label{eq:value_conv}
		F(\x^k) \;\to\; F(\x^*);\qquad F(\x^0)\geq F(\x^k)\geq F(\x^*),\quad\forall~k\geq0.
	\end{equation}
	Since $F$ satisfies the SK\L{} property at $ \x^*$ under the kernel $h$, there exists a scalar $\eta \in(0,+\infty]$, a neighborhood $\cU$ of $\x^*$, and a continuous concave function $\zeta:[0, \eta) \rightarrow \mathbb{R}_{+}$ such that
	\begin{enumerate}[label={{\rm (\roman*)}}]
		\item $\zeta$ is continuously differentiable on $(0, \eta)$ with $\zeta(0)=0$ and $\zeta^{\prime}>0$ over $(0, \eta)$,
		\item for all $\x\in\cU\cap\inter(\dom(h))$ with $F(\x^*)<F(\x)<F(\x^*)+\eta$, 
		$$
		\zeta^{\prime}\left(F(\x)-F(\x^*)\right) \cdot\operatorname{dist}\left(\bz,  \nabla^2h(\x)^{-\frac12}\partial F(\x)\right) \geq 1.
		$$
	\end{enumerate}		
	Consider $d>0$ sufficiently small so that $\B(\x^*, d) \subseteq \cU $. Since the  kernel function $h$ is strongly convex, there exists a constant $\sigma > 0 $ satisfying
	\begin{equation}\label{stongconvexkernel}
		\nabla^2h(\x) \geq\sigma\bI, 
		\quad \forall\, \x \in \mathbb{B}(\x^*, d).
	\end{equation}
	Recall $F(\x^k)\to F(\x^*)$. By discarding finite iterates, without loss of generality, we may assume that
	\begin{equation}\label{initialbound}
		\|\x^0 - \x^*\|_2 
		+ 2\sqrt{\frac{ F(\x^0)-F(\x^*) }{a\sigma}}
		+ \frac{b}{a\sqrt{\sigma}}\,
		\zeta\left(F(\x^0)-F(\x^*)\right)
		< d;
	\end{equation}
	\begin{equation}\label{fxeta}
		F(\x^{0}) \;<\; F(\x^*) + \eta.
	\end{equation}
	Our strategy is to establish the following inequality by induction:
	\begin{equation}\label{16}
		\|\x^{0} - \x^*\|_2 + \sum_{t=0}^K \|\x^{t+1} - \x^t\|_2 < d.
	\end{equation}
	We first prove \eqref{16} for $K=0$. 
	Note that the scaled sufficient decrease (H1) for $k=0$ yields
	\begin{align*}
		F(\x^{1}) - F(\x^{0}) &\leq -a
		\left\|\nabla^2h(\x^{1})^{\frac{1}{2}}(\x^{1}-\x^{0})\right\|^2_2.
	\end{align*}
	This, together with \eqref{stongconvexkernel}, implies
	$F(\x^{1}) - F(\x^{0}) \leq -a\sigma\|\x^{1} - \x^{0}\|_2^2$, and further  
	\begin{equation*}\label{upper-bound}
		\|\x^{0} - \x^1\|_2  
		\;\leq\; \sqrt{\frac{ F(\x^{0}) - F(\x^{1}) }{a\sigma}}  
		\;\leq\; \sqrt{\frac{ F(\x^{0}) - F(\x^*) }{a\sigma}},
	\end{equation*}
	where the second inequality uses $F(\x^k)\geq F(\x^*)$ for all $k\geq0$.
	
	Combined with \eqref{initialbound}, the above bound yields 
	$\|\x^{0}-\x^*\|_2 + \|\x^{0}-\x^{1}\|_2 < d$.  
	We conclude that \eqref{16} holds for $K=0$.
	Now, suppose that \eqref{16} holds for $K=k$ for some $k\geq 0$, which, by the triangle inequality, yields
	\[
	\x^t \in \mathbb{B}(\x^*,d), \quad t=0,1,\ldots, k+1.
	\]
	To complete the induction, we prove that \eqref{16} holds for $K=k+1$.
	Let us first consider the trivial case $F(\x^{t_0}) = F(\x^*)$ for some $t_0\in\N$.  
	By the descrease condition (H1) and \eqref{eq:value_conv}, we have
	\[
	F(\x^*) \leq F(\x^{t_0+1}) \leq F(\x^{t_0})=F(\x^*),
	\]
	and further $F(\x^{t_0+1})=F(\x^*)=F(\x^{t_0})$. By (H1), we have $\x^{t_0+1}=\x^{t_0}$, and inductively $\x^k \equiv \x^{t_0}$ for all $k \geq t_0$.
	It follows that $\{\x^k\}_{k\geq0}$ converges to $\x^{t_0}$. Then, by the limiting stationarity condition (H3), we see that $\x^{t_0}$ is a stationary point of $F$. Moreover, we have
	\[
	\|\x^{0}-\x^*\|_2 + \sum_{t=0}^\infty \|\x^t-\x^{t+1}\|_2
	= \|\x^{0}-\x^*\|_2 + \sum_{t=0}^{t_0-1} \|\x^t-\x^{t+1}\|_2 < d,
	\]
	where the last inequality follows from~\eqref{16} with $K=k$. Hence, in this case, the induction \eqref{16} and the desired convergence result hold.
	
	Therefore, we only need to focus on the nontrivial case, where $F(\x^k) > F(\x^*)$ for all $k\geq0$. Recall \eqref{fxeta}. We know that in this case
	\begin{equation}\label{17}
		F(\x^*) < F(\x^k) < F(\x^*)+\eta, \qquad \forall~k\geq0.
	\end{equation}
	The above bound, together with the inclusion 
	$\x^t \subseteq \mathbb{B}(\x^*, d) \subseteq \mathcal{U}$ for $t=0,1,\ldots,k+1$, 
	ensures that the extended SK\L{} inequality of $F$ at $\x^*$ applies to each $\x^t$, $t=0,1,\ldots,k+1$.  
	Using the concavity of $\zeta$ and the SK\L{} inequality
	\[
	\zeta\big(F(\x)-F(\x^*)\big) \cdot 
	\dist\left(\bz, \nabla^2 h(\x)^{-\frac{1}{2}} \partial F(\x)\right)\geq 1,
	\]
	we have, for $t=0,1,\ldots,k+1$,
	\begin{equation}\label{18}
		\begin{aligned}
			&\zeta\big(F(\x^t)-F(\x^*)\big) - \zeta\big(F(\x^{t+1})-F(\x^*)\big) \\
			\geq& \zeta'\big(F(\x^t)-F(\x^*)\big)\big(F(\x^t)-F(\x^{t+1})\big) \\
			\geq &\frac{F(\x^t)-F(\x^{t+1})}{\dist\left(\bz, \nabla^2 h(\x^t)^{-\frac{1}{2}}  \partial F(\x^t)\right)}.
		\end{aligned}
	\end{equation}
	This, together with (H1) and (H2), yields that for $t=1,2\ldots,k+1$,
	\[
	\zeta\big(F(\x^t)-F(\x^*)\big) - \zeta\big(F(\x^{t+1})-F(\x^*)\big) \geq \frac{a}{b}\cdot \frac{ \left\| \nabla^2h(\x^{t+1})^{\tfrac{1}{2}} (\x^{t+1}-\x^t) \right\|_2^2}{\left\| \nabla^2h(\x^{t})^{\tfrac{1}{2}} (\x^{t}-\x^{t-1}) \right\|_2}.
	\]
	Multiply $\frac{b}{a}$ and $\| \nabla^2h(\x^{t})^{\tfrac{1}{2}} (\x^{t}-\x^{t-1}) \|_2$ on both sides and take a square root. We have
	\[\begin{aligned}
		&\left\| \nabla^2h(\x^{t+1})^{\tfrac{1}{2}} (\x^{t+1}-\x^t) \right\|_2\\
		\leq&  \sqrt{\frac{b}{a}\left(\zeta\big(F(\x^t)-F(\x^*)\big) - \zeta\big(F(\x^{t+1})-F(\x^*)\big)\right)} \cdot\sqrt{\left\| \nabla^2h(\x^{t})^{\tfrac{1}{2}} (\x^{t}-\x^{t-1}) \right\|_2 } \\
		\leq& \frac{b}{2a}\left(\zeta\big(F(\x^t)-F(\x^*)\big) - \zeta\big(F(\x^{t+1})-F(\x^*)\big)\right)+\frac12\left\| \nabla^2h(\x^{t})^{\tfrac{1}{2}} (\x^{t}-\x^{t-1}) \right\|_2,
	\end{aligned} \]
	where the second inequality uses the Cauchy inequality.
	
	Sum the above inequality over $t=1,\ldots,k+1$. We obtain
	\[
	\begin{aligned}
		&\left\| \nabla^2h(\x^{k+2})^{\tfrac{1}{2}} (\x^{k+2}-\x^{k+1}) \right\|_2+\frac12\sum_{t=1}^{k}\left\| \nabla^2h(\x^{t+1})^{\tfrac{1}{2}} (\x^{t+1}-\x^t) \right\|_2\\
		\leq &\frac{b}{2a}\left(\zeta\big(F(\x^1)-F(\x^*)\big) - \zeta\big(F(\x^{k+2})-F(\x^*)\big)\right)+\frac12\left\| \nabla^2h(\x^{1})^{\tfrac{1}{2}} (\x^{1}-\x^{0}) \right\|_2\\
		\leq& \frac{b}{2a}\zeta\big(F(\x^0)-F(\x^*)\big)+\frac12\left\| \nabla^2h(\x^{1})^{\tfrac{1}{2}} (\x^{1}-\x^{0}) \right\|_2,
	\end{aligned}
	\]
	where the second inequality is due to (i) $\zeta(x)\geq0$ on $[0,\eta)$; (ii) $F(\x^1)\leq F(\x^0)$; and (iii) the monotonic increasing of $\zeta$. It follows that
	\[\begin{aligned}
		\sum_{t=0}^{k+1}\left\| \nabla^2h(\x^{t+1})^{\tfrac{1}{2}} (\x^{t+1}-\x^t) \right\|_2&\leq \frac{b}{a}\cdot\zeta\big(F(\x^0)-F(\x^*)\big)+2\left\| \nabla^2h(\x^{1})^{\tfrac{1}{2}} (\x^{1}-\x^{0}) \right\|_2.
	\end{aligned} \]
	Note that (H1) and $F(\x^1)\geq F(\x^*)$ yield
	\[\left\| \nabla^2h(\x^{1})^{\tfrac{1}{2}} (\x^{1}-\x^{0}) \right\|_2 \leq\sqrt{\frac{F(\x^0)-F(\x^1)}{a}}\leq \sqrt{\frac{F(\x^0)-F(\x^*)}{a}}.  \]
	We see that
	\[	\sum_{t=0}^{k+1}\left\| \nabla^2h(\x^{t+1})^{\tfrac{1}{2}} (\x^{t+1}-\x^t) \right\|_2\leq \frac{b}{a}\cdot\zeta\big(F(\x^0)-F(\x^*)\big)+ 2\sqrt{\frac{F(\x^0)-F(\x^*)}{a}} .\]
	Combine this with \eqref{stongconvexkernel}. We obtain
	\[ \sum_{t=0}^{k+1}\|\x^{t+1}-\x^t\|_2\leq \frac{b}{a\sqrt{\sigma}}\cdot\zeta\big(F(\x^0)-F(\x^*)\big)+ 2\sqrt{\frac{F(\x^0)-F(\x^*)}{a\sigma}}. \]
	The above inequality, together with \eqref{initialbound}, implies
	\[
	\|\x^{0} - \x^*\|_2 + \sum_{t=0}^{k+1}\|\x^{t+1}-\x^t\|_2 
	\le \|\x^{0}-\x^*\|_2 + 2\sqrt{\frac{F(\x^0)-F(\x^*)}{a\sigma}}+\frac{b}{a\sqrt{\sigma}}\cdot\zeta\big(F(\x^0)-F(\x^*)\big)< d,
	\]
	which proves \eqref{16} for $K=k+1$, thereby completing the induction. We conclude that  \eqref{16} holds for all $K \ge 0$. It follows that
	\[
	\|\x^{0}-\x^*\|_2 + \sum_{k=0}^{\infty} \|\x^{k+1}-\x^k\|_2 \;\le d < +\infty.
	\]
	Using the Cauchy convergence criterion and the condition that $\x^*$ is an accumulation point of  $\{\x^k\}_{k\geq0}$, we conclude that $\x^k $ converges to $\x^* $ with a finite length. Moreover, by (H3), $\x^*$ is a stationary point.
	We complete the proof.

	\section{Application to BPPM and BPGM}\label{sec:application}
	To apply Theorem \ref{th:general} to the BPMs, we verify (H1)---(H3) for sequences generated by them. As a preliminary step, we establish the relationship between different distances with the scaled norm $\| \nabla^2h(\x)^{\frac12}(\x-\y)\|_2$.
	\begin{proposition}\label{pro:BPM}
		Suppose that Assumption \ref{assum:general} (i) holds. Write $E_{r}(\x,\y):=\exp(r\|\nabla h(\x)-\nabla h(\y)\|_2)$. Then, for any bounded convex open subset $\cU\subseteq\inter(\dom(h))$, there exists $r>0$ such that for $\x,\y\in\cU$,
		\begin{enumerate}[label={{\rm (\roman*)}}]
			\item  $E_{-r}(\x,\y)\cdot \nabla^2h(\x) \preceq \nabla^2h(\y)\preceq E_{r}(\x,\y)\cdot \nabla^2h(\x)$;
			\item  $D_h(\x,\y)\geq \frac12E_{-r}(\x,\y)\cdot\| \nabla^2h(\x)^{\frac12} (\x-\y)\|^2_2$;
			\item  $ \|\nabla^2h(\x)^{-\frac12} (\nabla h(\x)-\nabla h(\y))\|_2\leq E_{r}(\x,\y)\cdot \| \nabla^2h(\x)^{\frac12}(\x-\y)\|_2$. 
		\end{enumerate}
	\end{proposition}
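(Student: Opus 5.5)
Since $h$ is separable, all three claims reduce to coordinatewise scalar statements about $\varphi$, and the key tool is a self-concordance-type bound on $\varphi'''$. By Proposition \ref{pro:estimate}, near every boundary point we have $|\varphi'''|=\Theta((\varphi'')^{\theta})$ with $\theta\in(1,2]$. On a bounded open convex $\cU\subseteq\inter(\dom(h))$, each coordinate projection $\cU_i$ is a bounded open interval in $\inter(\dom(\varphi))$. Its closure might touch $\bd(\dom(\varphi))$. I will first show that there is $r>0$ with
\[
|\varphi'''(s)|\le r\,\varphi''(s)^{2}\qquad\text{for all } s\text{ in the bounded set } \textstyle\bigcup_i \cU_i.
\]
Near a boundary point this follows from Proposition \ref{pro:estimate}. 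Since $\theta\le2$ and $\varphi''\to\infty$ there, we get $(\varphi'')^\theta\le(\varphi'')^2$ eventually. Away from the boundary, on the remaining compact piece, $\varphi'''$ is continuous and $\varphi''\ge\sigma>0$ by strong convexity, so the ratio $|\varphi'''|/(\varphi'')^2$ is bounded. Combining the two regions gives a uniform $r$, possibly after enlarging it.

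For (i), fix a coordinate and set $u=\varphi'(s)$. Because $\varphi''>0$, $\varphi'$ is a diffeomorphism from $\inter(\dom(\varphi))$ onto its range. Then
\[
\frac{d}{du}\log\varphi''(s)=\frac{\varphi'''(s)}{\varphi''(s)^2}\in[-r,r].
\]
Integrating between $u=\varphi'(x_i)$ and $u=\varphi'(y_i)$ gives
\[
e^{-r|\varphi'(x_i)-\varphi'(y_i)|}\le \frac{\varphi''(y_i)}{\varphi''(x_i)}\le e^{r|\varphi'(x_i)-\varphi'(y_i)|}.
\]
The segment between $x_i$ and $y_i$ stays in $\cU_i$ by convexity, so the bound $|\varphi'''|\le r(\varphi'')^2$ applies along it. Since $|\varphi'(x_i)-\varphi'(y_i)|\le\|\nabla h(\x)-\nabla h(\y)\|_2$ and the Hessians are diagonal, (i) follows.

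For (ii), I write the remainder in integral form:
\[
D_h(\x,\y)=\int_0^1(1-t)\,\langle\nabla^2h(\z_t)(\x-\y),\x-\y\rangle\,dt,\qquad \z_t=\y+t(\x-\y)\in\cU.
\]
Applying (i) between $\z_t$ and $\x$ gives $\nabla^2h(\z_t)\succeq E_{-r}(\x,\z_t)\nabla^2h(\x)$. Monotonicity of $\varphi'$ along the segment gives $|\varphi'(x_i)-\varphi'(z_{t,i})|\le|\varphi'(x_i)-\varphi'(y_i)|$, so $E_{-r}(\x,\z_t)\ge E_{-r}(\x,\y)$. Integrating $(1-t)$ over $[0,1]$ produces the factor $\tfrac12$.

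For (iii), I use coordinatewise mean-value theory: $\varphi'(x_i)-\varphi'(y_i)=\varphi''(\xi_i)(x_i-y_i)$ for some $\xi_i$ between $x_i$ and $y_i$. Then
\[
\varphi''(x_i)^{-1/2}|\varphi'(x_i)-\varphi'(y_i)|=\frac{\varphi''(\xi_i)}{\varphi''(x_i)}\,\varphi''(x_i)^{1/2}|x_i-y_i|.
\]
The scalar version of (i), applied with $\xi_i$ and using the same monotonicity, bounds the ratio by $E_r(\x,\y)$. Squaring and summing over coordinates gives (iii).

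The main obstacle is the first step: getting one uniform $r$ valid up to boundary points of $\dom(\varphi)$ that $\cl(\cU)$ may touch. This is exactly where $\theta\le2$ from Proposition \ref{pro:estimate} is needed, together with a compactness argument that splits the bounded set into neighborhoods of the finitely many boundary points and a compact interior remainder.
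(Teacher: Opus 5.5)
Your proposal is correct and follows essentially the same route as the paper. The shared ingredients are the uniform bound $|\varphi'''|\le r(\varphi'')^2$ on bounded sets, obtained from Proposition \ref{pro:estimate} (using $\theta\le 2$) plus a compactness argument away from the boundary; integrating $\log\varphi''$ against $\mathrm{d}\varphi'$ for (i); and the integral remainder together with monotonicity of $\varphi'$ along segments (the paper's Fact \ref{fact:ys}) for (ii). The only deviation is in (iii): your coordinatewise mean-value argument replaces the paper's integral representation with Jensen's inequality and Hessian inversion, a slightly more direct use of separability that gives the same bound.
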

	\begin{proposition}\label{pro:BPMf}
		Consider the setting of Proposition \ref{pro:BPM}. Suppose that $H\in\cC^2(\cU)$, and $Lh+H$, $Lh-H$ are convex for some $L>0$. Then, for $\x,\y\in\cU$,
		\begin{equation}\label{eq:hessianF}
			\left\|\nabla^2h(\x)^{-\frac12} (\nabla H(\x)-\nabla H(\y)) \right\|_2\leq5L E_r(\x,\y)\left\| \nabla^2h(\x)^{\frac12}(\x-\y)\right\|_2. 
		\end{equation}
	\end{proposition}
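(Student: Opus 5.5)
Since $h$ is separable, $\nabla^2 h$ is diagonal with entries $\varphi''(x_i)$, so \eqref{eq:hessianF} can be obtained by working along the segment $\z(s)=\y+s(\x-\y)$, $s\in[0,1]$, which stays in $\cU$ by convexity. By the fundamental theorem of calculus,
\[
\nabla H(\x)-\nabla H(\y)=\int_0^1\nabla^2H(\z(s))(\x-\y)\,{\rm d}s .
\]
Hence
\[
\nabla^2h(\x)^{-\frac12}\bigl(\nabla H(\x)-\nabla H(\y)\bigr)=\int_0^1 M(s)\,\nabla^2h(\x)^{\frac12}(\x-\y)\,{\rm d}s,
\]
where
\[
M(s):=\nabla^2h(\x)^{-\frac12}\nabla^2H(\z(s))\nabla^2h(\x)^{-\frac12}.
\]
It therefore suffices to bound the operator norm $\|M(s)\|_2\le 5L\,E_r(\x,\y)$ uniformly in $s$.

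The convexity of $Lh\pm H$ on $\cU$, with $H\in\cC^2(\cU)$, gives $-L\nabla^2h(\z)\preceq\nabla^2H(\z)\preceq L\nabla^2h(\z)$ for every $\z\in\cU$. Since $\nabla^2H(\z)$ is symmetric, conjugating by $\nabla^2h(\x)^{-\frac12}$ gives
\[
\|M(s)\|_2\le L\,\bigl\|\nabla^2h(\x)^{-\frac12}\nabla^2h(\z(s))\nabla^2h(\x)^{-\frac12}\bigr\|_2 .
\]
Proposition \ref{pro:BPM}(i), applied to the pair $(\x,\z(s))$, then yields
\[
\nabla^2h(\z(s))\preceq E_r(\x,\z(s))\,\nabla^2h(\x).
\]

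The main obstacle is to compare $E_r(\x,\z(s))$ with $E_r(\x,\y)$, i.e.\ to control $\|\nabla h(\x)-\nabla h(\z(s))\|_2$ by $\|\nabla h(\x)-\nabla h(\y)\|_2$. Separability handles this. Each coordinate $\varphi'$ is increasing, and $z_i(s)$ lies between $y_i$ and $x_i$, so
\[
|\varphi'(x_i)-\varphi'(z_i(s))|\le|\varphi'(x_i)-\varphi'(y_i)|\quad\text{for each } i.
\]
Consequently $\|\nabla h(\x)-\nabla h(\z(s))\|_2\le\|\nabla h(\x)-\nabla h(\y)\|_2$, and therefore $E_r(\x,\z(s))\le E_r(\x,\y)$.

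Combining these bounds gives $\|M(s)\|_2\le L\,E_r(\x,\y)$, which is stronger than the claimed constant $5L$; the factor $5$ leaves slack. If the diagonal-coordinate argument above turned out to be problematic, I would fall back on a cruder route. That route bounds $|\nabla^2H|$ by $L(\nabla^2h+|\nabla^2 h|)$-type estimates via the decomposition $\nabla^2H=(L\nabla^2h+\nabla^2H)-L\nabla^2h$, which produces a few extra multiples of $L$ and would explain the constant $5$. Integrating over $s\in[0,1]$ then gives \eqref{eq:hessianF}.
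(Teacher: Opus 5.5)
Your argument is correct, and it takes a genuinely different and sharper route than the paper's.

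\textbf{The paper's route.} It never works with the possibly indefinite $\nabla^2 H$ directly. It sets $G=2Lh+H$, so that $\nabla^2 G\succeq L\nabla^2 h\succ\bz$, and splits $\nabla H=\nabla G-2L\nabla h$. The $h$-part is bounded by Proposition~\ref{pro:BPM}(iii), giving the constant $2L$. The $G$-part is bounded by the same Jensen-plus-inverse-Loewner argument used for Proposition~\ref{pro:BPM}(iii), giving $3L$. That argument needs $\nabla^2 G(\y^s)$ positive definite in order to pass from $\nabla^2 G(\y^s)\preceq 3LE_r(\x,\y)\nabla^2 h(\x)$ to $\nabla^2 h(\x)^{-1}\preceq 3LE_r(\x,\y)\nabla^2 G(\y^s)^{-1}$. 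So the $5L=3L+2L$ is an artifact of this shift-and-split.

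\textbf{Your route.} You conjugate $\nabla^2H(\z(s))$ into the symmetric matrix $M(s)$. For a symmetric matrix, the two-sided bound $-LA\preceq M(s)\preceq LA$ with $A\succeq\bz$ already gives $\|M(s)\|_2=\max_{\|\v\|_2=1}|\v^\top M(s)\v|\le L\|A\|_2$. Combining this with Proposition~\ref{pro:BPM}(i) and your coordinatewise monotonicity comparison (which is exactly the paper's Fact~\ref{fact:ys}) yields the constant $L$. You need neither positive definiteness nor Proposition~\ref{pro:BPM}(iii).

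Your fallback paragraph is therefore unnecessary, and as written it is too vague to evaluate; the main argument already closes the proof.

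\textbf{Comparison.} The paper's route only buys uniformity with the proof of Proposition~\ref{pro:BPM}(iii). Yours is shorter and improves the constant in the BPGM verification of (H2) from $5L+1/\underline{\alpha}$ to $L+1/\underline{\alpha}$, which is immaterial for convergence.
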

	\begin{remark}\label{re:f1}
		The condition $H\in\cC^2(\cU)$ can be relaxed to $H\in\cC^1(\cU)$; see Sec. \ref{sec:remarkf1} for a proof.
	\end{remark}
	Proposition \ref{pro:BPM} (ii), along with the Bregman sufficient decrease for BPMs \eqref{eq:Bsd}, yields
	\[  F(\x^{k+1})-F(\x^k)\leq-\frac{L}2E_{-r}(\x^{k+1},\x^k)  \left\| \nabla^2h(\x)^{\frac12} (\x^{k+1}-\x^k)\right\|^2_2,\]
	which aligns with the scaled sufficient decrease (H1) provided that $E_{-r}(\x^{k+1},\x^k)$ is uniformly bounded for $k\geq0$. 
	Proposition \ref{pro:BPM} (iii) and Proposition \ref{pro:BPMf} link the scaled relative error (H2) to the optimality conditions of \eqref{eq:bppm} and \eqref{eq:bpgm}. Specifically, consider the optimality condition optimality conditions of the subproblems \eqref{eq:bppm}, \eqref{eq:bpgm}:
	\begin{equation}\label{eq:op}
		\nabla f\left(\x^{k+q}\right)+\p^{k+1}+\frac1{\alpha_k}\left(\nabla h\left(\x^{k+1}\right)-\nabla h\left(\x^k\right)\right) =\bz;\quad\p^{k+1}\in\partial g\left(\x^{k+1}\right),\tag{Op}
	\end{equation}
	where $q=1$ for the BPPM, and $q=0$ for the BPGM. 
	Multiply \eqref{eq:op} by the scaling matrix $\nabla^2h(\x^{k+1})^{-\frac12}$. Then, given that Proposition \ref{pro:BPM} (iii) and Proposition \ref{pro:BPMf} hold for $h$ and $f$, the triangle inequality yields 
	\[\begin{aligned}
		&\left\|\nabla^2h(\x^{k+1})^{-\frac12}\left(\nabla f(\x^{k+1})+\p^{k+1}\right)\right\|_2\\
		\leq &\left(\frac{1}{\alpha_k}+5L(1-q) \right)E_r(\x^{k+1},\x^k) \left\| \nabla^2h(\x^{k+1})^{\frac12} (\x^{k+1}-\x^k)\right\|_2  ,
	\end{aligned} \]
	which gives the scaled relative error inequality (H2) for the $k$-th iteration.
	
	With the above relationships, to ensure the uniform bound in (H1), (H2), a sufficient condition is that the ratio $E_r(\x^{k+1},\x^k)$ remains uniformly bounded for $k\geq0$. It amounts to the uniform boundedness of the kernel gradient gap $\nabla h(\x^{k+1})-\nabla h(\x^k)$. \citet{chen2026skl} established this bound for iterates $\x^k$ near a feasible point under linear equality constraints. However, such a guarantee might fail when $\dom(F)$ is an ill-posed subset of $\dom(h)$, as suggested by the following example.
	\begin{example}\label{example:fail}
		Consider $\cD=\B((1,1),1)\subseteq\R^2_+$, $h_S(\x)=\sum_{i=1}^2x_i\log(x_i)$, and $F(\x)=-x_2+\delta_{\cD}(\x)$ with $\x\in\R^2$. Let $\{\z^k\}_{k\geq0}\in\cD\cap\R^2_{++}$ be an arbitrary sequence converging to $\z^*=(0,1)$, and $\tilde{\z}^k$ be the next BPGM iterate at $\z^k$ with step size $1$, i.e.,
		\[ \tilde{\z}^k\coloneqq\argmin_{\z\in\cD}\left\{-z_2+D_{h_S}(\z,\z^k)\right\}. \]
		As we shall prove, one has $\|\nabla h_S(\tilde{\z}^k)-\nabla h_S(\z^k)\|_2\to+\infty$ in this setting, which implies that a uniform bound guarantee similar to \cite[Lemma 3]{chen2026skl} is impossible.
		
		\textit{Proof details:} By the definition of $\tilde{\z}^k$, we have $-\tilde{z}^k_2+D_{h_S}(\tilde{\z}^k,\z^k)\leq-z^k_2$ and further $D_{h_S}(\tilde{\z}^k,\z^k)\leq 4$. This, together with $z^k_1\to z^*_1=0$, implies $\tilde{z}^k_1\to0$. Observe that in the set $\cD$, $\z^*$ is the unique point with first coordinate equal to zero. We obtain $\tilde{\z}^k\to\z^*=(0,1)$. Combining it with $\z^k\to\z^*=(0,1)$ and the optimality condition 
		\[ \left(0,-1\right)+\tilde{\p}^k+\nabla h_S(\tilde{\z}^k)-\nabla h_S(\z^k)=\bz\]
		\[\text{with }\quad  \tilde{\p}^k\in\partial \delta_{\cD}(\tilde{\z}^k)=\cN_{\cD}(\tilde{\z}^k)=\{c(\tilde{z}^k_1-1,\tilde{z}^k_2-1):c\geq0\}, \]
		we see that $\tilde{p}^k_2\to1$. Let $\tilde{\p}^k=c_k(\tilde{z}^k_1-1,\tilde{z}^k_2-1)$ with $c_k\geq0$. We have $c_k=\tilde{p}^k_2/(\tilde{z}^k_2-1)\to+\infty$. As $\|\tilde{\p}^k\|_2\geq c_k|\tilde{z}^k_1-1|$ with $\tilde{z}^k_1\to0$, we further have  $\|\tilde{\p}^k\|_2\to+\infty$. Then, the optimality condition yields 
		\[\lim_{k\to\infty}\|\nabla h_S(\tilde{\z}^k)-\nabla h_S(\z^k)\|_2=\lim_{k\to\infty}\left\|\log\left(\frac{\tilde{\z}^k}{\z^k}\right)\right\|_2=+\infty.\]
	\end{example}
	To avoid the phenomenon illustrated in Example \ref{example:fail} and to guarantee the boundedness of $\{\nabla h(\x^{k+1})-\nabla h(\x^k)\}_{k\geq0}$, we impose well-posedness conditions on the domain $\dom(F)$.
	\begin{assumption}\label{assum:general2}
		Either $\dom(F)=\{\y:\A\x\leq\b\}$ with $\A\in\R^{m\times n}$, $\b\in\R^m$;  or $\dom(F)=\cF\cap\cl(\dom(h))$ with $\cF$ closed convex and $\cN_{\cF}(\x)\cap\spann\{\e_j:x_j\in\bd(\dom(\varphi))\}=\{\bz\}$ for $\x\in\dom(F)$.
	\end{assumption} 
	The above setting extends the linear equality constraints considered in \cite{chen2026skl} to general polyhedral constraints and, moreover, includes sets that satisfy a constraint qualification (CQ). Notably, when $\dom(F)=\{\x\in\dom(h):G_i(\x)\leq0,i\in[m]\}$ for some smooth functions $G_i$, the CQ in Assumption \ref{assum:general2} is strictly weaker than the linear independence constraint qualification (LICQ) commonly adopted in the variational analysis literature.
	
	With condition Assumption \ref{assum:general2} in place, we are now prepared to establish the uniform boundedness of the kernel gradient gap sequence; see Section \ref{appen:bound} in Appendix for its proof.
	\begin{proposition}\label{pro:bound}
		Suppose that Assumptions  \ref{assum:general}, \ref{assum:general2} hold. Let $\{\x^k\}_{k\geq0}$ be a bounded sequence satisfying \eqref{eq:bppm} or \eqref{eq:bpgm} with $\|\x^{k+1}-\x^k\|_2\to0$. Then, the sequence $\{\nabla h(\x^{k+1})-\nabla h(\x^{k})\}_{k\geq0}$ is bounded.
	\end{proposition}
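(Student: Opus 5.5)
I will argue by contradiction, using the optimality condition \eqref{eq:op} to move the question from the kernel gradient gap to the subgradients of $g$. Set $\d^k:=\nabla h(\x^{k+1})-\nabla h(\x^k)$ and suppose that along a subsequence $T_k:=\|\d^k\|_2\to+\infty$. Since $\{\x^k\}$ is bounded and $\|\x^{k+1}-\x^k\|_2\to0$, I may pass to a further subsequence with $\x^k\to\bar\x$, $\x^{k+1}\to\bar\x$ and $\d^k/T_k\to\bar\blam$, where $\|\bar\blam\|_2=1$. Because $\dom(g)$ is closed, $\bar\x\in\dom(F)$. Continuity of $\nabla f$ on $\dom(g)$ keeps $\nabla f(\x^{k+q})$ bounded. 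Assuming the step sizes $\alpha_k$ are bounded away from $0$ and $+\infty$, which is the standing step-size assumption for these methods, \eqref{eq:op} gives $\p^{k+1}/T_k\to-\bar\blam/\bar\alpha$ for some limit $\bar\alpha>0$ of $\alpha_k$. Hence $\|\p^{k+1}\|_2\to\infty$ at the same rate as $T_k$.

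Next I will locate $\bar\blam$ in two cones. For coordinates with $\bar x_i\in\inter(\dom(\varphi))$, $\varphi'$ is uniformly continuous near $\bar x_i$, so $d^k_i\to0$ and $\bar\lambda_i=0$. Therefore $\bar\blam\in\spann\{\e_j:\bar x_j\in\bd(\dom(\varphi))\}$. On the other side, $g$ is convex and locally Lipschitz on $\dom(g)$, so $\partial g(\x)\subseteq\partial g_0(\x)+\cN_{\dom(g)}(\x)$ with $\partial g_0$ locally bounded. Normalizing $\p^{k+1}$ by $T_k$ and using outer semicontinuity of the normal cone of the closed convex set $\dom(g)$ yields $-\bar\blam\in\cN_{\dom(F)}(\bar\x)$. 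Concretely, for every $\z\in\dom(F)$, divide $\langle \p^{k+1},\z-\x^{k+1}\rangle\le g(\z)-g(\x^{k+1})$ by $T_k$ and let $k\to\infty$; $g(\x^{k+1})$ stays bounded below by continuity.

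In the CQ case, $\dom(F)=\cF\cap\cl(\dom(h))$. I will use the normal cone sum rule $\cN_{\dom(F)}(\bar\x)=\cN_{\cF}(\bar\x)+\cN_{\cl(\dom(h))}(\bar\x)$, which holds because the CQ rules out opposing normals, as in the standard qualification for intersections. This gives $-\bar\blam=\v+\w$ with $\v\in\cN_\cF(\bar\x)$ and $\w\in\cN_{\cl\dom(h)}(\bar\x)$. The cone $\cN_{\cl\dom(h)}(\bar\x)$ is supported on the boundary coordinates, and its signs are the opposite of those that Lemma \ref{le:divide}-type reasoning gives for $\bar\blam$: at a left endpoint, $\varphi'\to-\infty$ forces sign information on $\bar\lambda_j$. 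So far I only have limits, so I will refine using the actual sequence. For a boundary index $j$ with $|d^k_j|\to\infty$, monotonicity of $\varphi'$ says $x^{k+1}_j$ is closer to the boundary than $x^k_j$ exactly when $d_j^k$ has the ``inward blowing'' sign. Combined with the sign structure of the normal cone, this should force $\v=-\bar\blam-\w$ to be a nonzero vector in $\cN_\cF(\bar\x)\cap\spann\{\e_j:\bar x_j\in\bd\}$, contradicting Assumption \ref{assum:general2}. The part I must get right is showing that $\w$ cannot cancel $\bar\blam$ entirely. For this I will exploit that $\p^{k+1}\in\partial g(\x^{k+1})$ with $\x^{k+1}$ in the interior of $\dom(h)$. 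Then the normal component of $\p^{k+1}$ comes only from $\cN_\cF(\x^{k+1})$, and $\cl\dom(h)$ contributes nothing at interior points. So $-\bar\blam$ is in fact a limit of normalized elements of $\cN_\cF(\x^{k+1})$, hence in $\cN_\cF(\bar\x)$, with $\w=\bz$. This gives the contradiction immediately.

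In the polyhedral case, $\cN_{\dom(F)}(\x^{k+1})=\{\A^\top\mu:\mu\ge0,\ \mu_i=0 \text{ for inactive } i\}$, and a nonzero $\bar\blam$ supported on boundary coordinates is not excluded by itself. Here I will argue via the subproblem instead. Take $\z$ in the relatively interior direction: $\inter(\dom h)\cap\dom(g)\neq\emptyset$ gives a point $\hat\x$. Testing the variational inequality of the subproblem at convex combinations $\x^{k+1}+t(\hat\x-\x^{k+1})$ bounds $\langle\d^k,\hat\x-\x^{k+1}\rangle$ from above. By the sign analysis, each term $d^k_j(\hat x_j-x^{k+1}_j)$ with $\bar x_j$ on the boundary tends to $+\infty$, since $\hat x_j$ is strictly interior, and this contradicts the bound. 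I expect this polyhedral step to be the main obstacle. The $\mu$'s may blow up along active constraints, and the boundary blow-up of $\varphi'$ must be shown to have the correct sign, which depends on $\x^{k+1}$ being the Bregman minimizer relative to $\x^k$. I would first try Hoffman-type bounds on the multipliers of the active polyhedral face to control $\mu$.
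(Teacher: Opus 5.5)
Your argument for the second alternative of Assumption~\ref{assum:general2} is correct and matches the paper's. At points of $\inter(\dom(h))$ one has $\cN_{\dom(g)}(\x^{k+1})=\cN_{\cF}(\x^{k+1})$, so outer semicontinuity places $-\bar\blam$ in $\cN_{\cF}(\bar\x)\cap\spann\{\e_j:\bar x_j\in\bd(\dom(\varphi))\}$ and the CQ is violated. The sum-rule detour is unnecessary.

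The polyhedral step, however, has a genuine gap. From \eqref{eq:op}, $\p^{k+1}=-\nabla f(\x^{k+q})-\alpha_k^{-1}\d^k$. So the subgradient inequality at a fixed $\hat\x\in\dom(F)$ gives $\alpha_k^{-1}\langle\d^k,\hat\x-\x^{k+1}\rangle\ge g(\x^{k+1})-g(\hat\x)-\langle\nabla f(\x^{k+q}),\hat\x-\x^{k+1}\rangle$. This is a \emph{lower} bound, as in the Bregman projection inequality, not an upper bound. A sum tending to $+\infty$ is therefore no contradiction. After dividing by $T_k$, it only returns $\langle-\bar\blam,\hat\x-\bar\x\rangle\le0$. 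That is already contained in $-\bar\blam\in\cN_{\dom(F)}(\bar\x)$, which, as you observed yourself, is not contradictory for polyhedra. So no fixed test point can close the argument.

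The claim that each boundary term $d^k_j(\hat x_j-x^{k+1}_j)$ tends to $+\infty$ is also unfounded, because the boundary does not dictate the sign of $d^k_j$. Take the Shannon kernel, $\dom(F)=\{\x\in\R^2_+:x_1\le x_2\}$ and $\bar\x=\bz$, where the limiting cone information is inconclusive. At $\x^{k+1}\in\R^2_{++}$ only $x_1\le x_2$ can be active, so a blow-up would read $\d^k=-\alpha_k\mu_k(1,-1)+O(1)$ with $\mu_k\to+\infty$. Then $d^k_1(\hat x_1-x^{k+1}_1)\to-\infty$. Meanwhile $\langle\d^k,\hat\x-\x^{k+1}\rangle=\alpha_k\mu_k(\hat x_2-\hat x_1)+O(1)$ is bounded below, which is fully consistent with the variational inequality. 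Hoffman bounds on the multipliers do not address this.

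The missing idea is to argue at finite iterates and to test against the previous iterate $\x^{k_l}$.
\begin{itemize}
\item Decompose $\p^{k+1}=\hat\p^{k+1}+\A^\top\boldsymbol{\mu}^{k+1}$, with $\hat\p^{k+1}$ bounded and $\boldsymbol{\mu}^{k+1}\ge\bz$ supported on the active set $\cA(\x^{k+1})$.
\item Pass to a subsequence with $\cA(\x^{k_l+1})\equiv\cA_0$ constant.
\item The cone $\{\A^\top\boldsymbol{\mu}:\boldsymbol{\mu}\ge\bz,\ \mu_i=0~(i\notin\cA_0)\}$ is closed. Hence $\p^*:=\lim_l\p^{k_l+1}/\|\p^{k_l+1}\|_2$, which equals $-\bar\blam$, is $\A^\top\boldsymbol{\mu}^*$ with $\boldsymbol{\mu}^*$ supported on $\cA_0$.
\item Complementarity then holds exactly for every $l$: $\langle\p^*,\x^{k_l+1}-\x^{k_l}\rangle=\langle\boldsymbol{\mu}^*,\A\x^{k_l+1}-\b\rangle-\langle\boldsymbol{\mu}^*,\A\x^{k_l}-\b\rangle\ge0$.
\item On the other hand, \eqref{eq:op} and strict monotonicity of $\varphi'$ give, for large $l$, $x^{k_l+1}_i<x^{k_l}_i$ when $p^*_i>0$ and $x^{k_l+1}_i>x^{k_l}_i$ when $p^*_i<0$. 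Hence $\langle\p^*,\x^{k_l+1}-\x^{k_l}\rangle<0$.
\end{itemize}
In the example above, $x^{k+1}_1=x^{k+1}_2$ and $x^k_1\le x^k_2$ force $(x^{k+1}_1-x^k_1)-(x^{k+1}_2-x^k_2)\ge0$. This contradicts $x_1$ decreasing while $x_2$ increases. The sign argument exploits the coordinatewise coupling between $\d^k$ and $\x^{k+1}-\x^k$, which a fixed interior test point $\hat\x$ cannot detect.
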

	Proposition \ref{pro:bound} not only enables the verification of (H1) and (H2), but also ensures the validity of (H3) for the BPPM and BPGM sequences. Consequently, the general convergence Theorem \ref{th:general} applies to these sequences, leading to the main results of this section.
	\begin{theorem}[BPPM]\label{th:BPPM}
		Suppose that Assumptions  \ref{assum:general}, \ref{assum:general2} hold. Let $\{\x^k\}_{k\geq0}$ be a bounded BPPM sequence satisfying \eqref{eq:bppm} with $0<\underline{\alpha}\leq \alpha_k\leq \bar{\alpha}$. Then, {\rm (H1)---(H3)} hold. Further, if $F$ is an SK\L{} function under the kernel $h$, then  $\{\x^k\}_{k\geq0}$ converges to a stationary point of $F$ with a finite length. 
	\end{theorem}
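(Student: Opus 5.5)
The proof reduces to verifying (H1)---(H3) for the BPPM sequence and then invoking Theorem \ref{th:general}.

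\textbf{Step 1: Bregman sufficient decrease.} Comparing the objective value at $\x^{k+1}$ with the candidate $\x=\x^k$ in \eqref{eq:bppm} gives
\[F(\x^{k+1})+\tfrac{1}{\alpha_k}D_h(\x^{k+1},\x^k)\leq F(\x^k).\]
Since $\alpha_k\leq\bar\alpha$, this yields $F(\x^{k+1})-F(\x^k)\leq-\frac1{\bar\alpha}D_h(\x^{k+1},\x^k)$. Three consequences follow.
\begin{itemize}
\item $\{F(\x^k)\}$ is nonincreasing.
\item $\{F(\x^k)\}$ is bounded below, because the sequence is bounded and $F$ is continuous on its closed domain. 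Hence $\sum_k D_h(\x^{k+1},\x^k)<\infty$.
\item On a compact convex set containing the iterates, strong convexity of $h$ gives $D_h(\x^{k+1},\x^k)\geq\frac{\sigma}{2}\|\x^{k+1}-\x^k\|_2^2$. Therefore $\|\x^{k+1}-\x^k\|_2\to0$.
\end{itemize}
Proposition \ref{pro:bound} now applies and shows that $\|\nabla h(\x^{k+1})-\nabla h(\x^k)\|_2\leq M$ for all $k$.

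\textbf{Step 2: Verify (H1) and (H2).} I will apply Proposition \ref{pro:BPM} with $\x=\x^{k+1}$ and $\y=\x^k$. That proposition requires a bounded convex open $\cU\subseteq\inter(\dom(h))$ containing both points, and $\inter(\dom(h))$ itself may be unbounded. I would take $\cU=\inter(\dom(h))\cap\B(\bz,R)$ with $R$ larger than the bound on the iterates. This set is convex, open and bounded.

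The main obstacle is whether the constant $r$ in Proposition \ref{pro:BPM} is uniform on such a $\cU$, which touches the boundary. I expect it is: by separability and Proposition \ref{pro:estimate}, $\varphi'''/\varphi''$ is controlled in terms of $\varphi'$, and that is exactly what the exponential factor $E_r$ encodes. I would check that the statement of Proposition \ref{pro:BPM} indeed allows this $\cU$.

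With uniform $r$, we have $E_{\pm r}(\x^{k+1},\x^k)\in[e^{-rM},e^{rM}]$ for every $k$.
\begin{itemize}
\item Proposition \ref{pro:BPM}(ii) combined with Step 1 gives (H1) with $a=\frac{e^{-rM}}{2\bar\alpha}$.
\item For (H2), the optimality condition \eqref{eq:op} with $q=1$ reads $\nabla f(\x^{k+1})+\p^{k+1}=-\frac1{\alpha_k}(\nabla h(\x^{k+1})-\nabla h(\x^k))$. Because $g$ is convex and locally Lipschitz on its domain and $\x^{k+1}\in\inter(\dom(h))$, the sum rule gives $\nabla f(\x^{k+1})+\p^{k+1}\in\partial F(\x^{k+1})$. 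Multiplying by $\nabla^2h(\x^{k+1})^{-1/2}$ and applying Proposition \ref{pro:BPM}(iii) gives (H2) with $b=e^{rM}/\underline\alpha$.
\end{itemize}
No Lipschitz-type assumption on $f$ is needed here, since $q=1$.

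\textbf{Step 3: Verify (H3).} Suppose $\x^k\to\bar\x$. I split into two cases according to $T_k:=\|\nabla h(\x^{k+1})-\nabla h(\x^k)\|_2$.
\begin{itemize}
\item If $T_k\to0$, then $\v^{k+1}:=\nabla f(\x^{k+1})+\p^{k+1}\to\bz$, and $F(\x^{k+1})\to F(\bar\x)$ by continuity. The limiting subdifferential is closed under such limits, so $\bz\in\partial F(\bar\x)$.
\item Otherwise, $T_k$ stays bounded by Step 1 but need not vanish, and I argue via normal cones. Along a subsequence, $\nabla h(\x^{k+1})-\nabla h(\x^k)\to\bm{w}$. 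Coordinates with $\bar x_i\in\inter(\dom(\varphi))$ give $w_i=0$ by continuity of $\varphi'$. For boundary coordinates, I would telescope: if $w_j\neq0$ then $\varphi'(x^k_j)$ diverges linearly in $k$. Normalizing by $T=k$ in Lemma \ref{le:divide} shows that the accumulated direction lies in $\cN_{\dom(h)}(\bar\x)$, with sign consistent with $w$ (if $\bar x_j$ is a left endpoint, $\varphi'\to-\infty$ forces $w_j\leq0$). So $\bm{w}\in\cN_{\cl\dom(h)}(\bar\x)$, supported on boundary coordinates. Then $\p^{k+1}\to\bar\p:=-\nabla f(\bar\x)-\bm{w}/\alpha$ along a further subsequence with $\alpha_k\to\alpha$, and $\bar\p\in\partial g(\bar\x)$ by closedness of the graph of $\partial g$. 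Finally, $\dom(F)\subseteq\cl(\dom(h))$ gives $\cN_{\cl\dom(h)}(\bar\x)\subseteq\cN_{\dom(g)}(\bar\x)$. Since $g$ is convex, $\partial g(\bar\x)+\cN_{\dom(g)}(\bar\x)=\partial g(\bar\x)$. Hence $\bz=\nabla f(\bar\x)+\bar\p+\bm{w}/\alpha\in\partial F(\bar\x)$.
\end{itemize}

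\textbf{Conclusion.} With (H1)---(H3) established, Theorem \ref{th:general} (under Assumption \ref{assum:general}(i), continuity of $F$, and the SK\L{} property) gives convergence of $\{\x^k\}$ to a stationary point of $F$ with finite length.
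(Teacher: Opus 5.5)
Your Steps 1 and 2 are sound and match the paper's verification of (H1) and (H2). Proposition \ref{pro:BPM} is stated for an arbitrary bounded convex open $\cU\subseteq\inter(\dom(h))$, so your choice $\cU=\inter(\dom(h))\cap\B(\bz,R)$ is allowed and gives a uniform $r$. Your constants $a=e^{-rM}/(2\bar\alpha)$ and $b=e^{rM}/\underline\alpha$ are the paper's.

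The gap is in the second case of (H3), specifically the claim $\bm w\in\cN_{\cl(\dom(h))}(\bar\x)$. You only know that $\nabla h(\x^{k+1})-\nabla h(\x^k)\to\bm w$ along a subsequence $\{k_l\}$. The telescoping identity $\varphi'(x^K_j)=\varphi'(x^0_j)+\sum_{k<K}\bigl(\varphi'(x^{k+1}_j)-\varphi'(x^k_j)\bigr)$, however, involves every increment. So nothing ties $w_j$ to the growth of $\varphi'(x^k_j)$. Lemma \ref{le:divide} only fixes the sign of limits of $\nabla h(\x^k)/T_k$ with $T_k\to\infty$, i.e. of averaged increments, not of individual ones.

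For example, bounded increments $+1,-2,+1,-2,\ldots$ drive $\varphi'(x^k_j)\to-\infty$, which is consistent with $\bar x_j$ being a left endpoint. Yet the odd-indexed increments converge to $+1$, which is the wrong sign for the normal cone. With several boundary coordinates you cannot repair this by choosing a favourable subsequence either, because the bad indices of different coordinates can interlace. Nothing in your argument excludes such behaviour. Without the sign of $\bm w$, the vector $\bar\p+\bm w/\alpha$ need not lie in $\partial g(\bar\x)$. Your first case, applied along a subsequence, does settle $\liminf_k T_k=0$; the real difficulty is $\liminf_k T_k>0$.

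The missing idea is to average before passing to the limit, which is what the paper does. Multiply \eqref{eq:op} (with $q=1$) by $\alpha_l$, sum over $l=0,\ldots,k$, and divide by $S_k=\sum_{l=0}^k\alpha_l$:
\[
\frac{1}{S_k}\sum_{l=0}^{k}\alpha_l\nabla f(\x^{l+1})+\frac{1}{S_k}\sum_{l=0}^{k}\alpha_l\p^{l+1}+\frac{\nabla h(\x^{k+1})-\nabla h(\x^0)}{S_k}=\bz .
\]
Each term can now be controlled:
\begin{itemize}
\item The first term tends to $\nabla f(\bar\x)$.
\item The second term is bounded, since $\{\p^{k+1}\}$ is bounded by \eqref{eq:op} and the bound $M$. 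Each of its cluster points $\bar\p$ lies in $\partial g(\bar\x)$. The paper shows this by averaging the subgradient inequality $\langle\p^{l+1},\x^{l+1}-\x\rangle\geq g(\x^{l+1})-g(\x)$. Alternatively, all cluster points of $\{\p^{l+1}\}$ lie in the closed convex set $\partial g(\bar\x)$.
\item The third term equals $\nabla h(\x^{k+1})/T_k$ with $T_k=S_k\to\infty$, up to the vanishing term $\nabla h(\x^0)/S_k$. Lemma \ref{le:divide} therefore places its limit $\bar\blam=-\nabla f(\bar\x)-\bar\p$ in $\cN_{\dom(h)}(\bar\x)$.
\end{itemize}
Your closing normal-cone step, $\partial g(\bar\x)+\cN_{\dom(g)}(\bar\x)=\partial g(\bar\x)$, then gives $\bz\in\partial F(\bar\x)$.
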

	
	\begin{theorem}[BPGM]\label{th:BPGM}
		Suppose that Assumptions  \ref{assum:general}, \ref{assum:general2} hold, and $Lh+ F$, $Lh- F$ are convex for some $L>0$. Let $\{\x^k\}_{k\geq0}$ be a bounded BPGM sequence satisfying \eqref{eq:bpgm} with $0<\underline{\alpha}\leq \alpha_k\leq \bar{\alpha}<1/L$. Then, {\rm (H1)---(H3)} hold. 
		Further, if $F$ is an SK\L{} function under the kernel $h$, then  $\{\x^k\}_{k\geq0}$ converges to a stationary point of $F$ with a finite length. 
	\end{theorem}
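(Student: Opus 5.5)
The plan is to verify (H1)--(H3) for the BPGM sequence and then invoke Theorem \ref{th:general}. The continuity hypothesis there holds because $\dom(F)=\dom(g)$ is closed, $f$ is $\cC^1$ on it, and $g$ is locally Lipschitz on it. Two preliminary facts come first.
\begin{enumerate}[label={\rm (\alph*)}]
\item \emph{Bregman sufficient decrease.} Since $g$ is convex, $Lh-f=(Lh-F)+g$ is convex, which gives the descent inequality $f(\x^{k+1})\le f(\x^k)+\langle\nabla f(\x^k),\x^{k+1}-\x^k\rangle+LD_h(\x^{k+1},\x^k)$. Comparing the objective of \eqref{eq:bpgm} at $\x^{k+1}$ and at $\x^k$ then yields
\[F(\x^{k+1})\le F(\x^k)-\Big(\tfrac1{\bar\alpha}-L\Big)D_h(\x^{k+1},\x^k).\]
\item \emph{Vanishing steps and bounded gradient gap.} $F$ is continuous on the compact set $\cl\{\x^k\}\subseteq\dom(F)$, so $F$ is bounded below along the sequence and $\sum_k D_h(\x^{k+1},\x^k)<\infty$. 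Strong convexity of $h$ then gives $\|\x^{k+1}-\x^k\|_2\to0$. Proposition \ref{pro:bound} now yields $\|\nabla h(\x^{k+1})-\nabla h(\x^k)\|_2\le M$ for all $k$.
\end{enumerate}

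\textbf{(H1) and (H2).} I apply Proposition \ref{pro:BPM} on $\cU=\inter(\dom(h))\cap\inter(B)$, where $B$ is a large open box containing all iterates. This $\cU$ is bounded, convex and open, so there is $r>0$ with $e^{-rM}\le E_{\pm r}(\x^{k+1},\x^k)^{\pm1}\le e^{rM}$.
\begin{itemize}
\item Combining (a) with Proposition \ref{pro:BPM}(ii), with $\x=\x^{k+1}$ and $\y=\x^k$, gives (H1) with $a=\tfrac12(\bar\alpha^{-1}-L)e^{-rM}$.
\item For (H2), I use \eqref{eq:op} with $q=0$. Since $f$ is smooth, $\nabla f(\x^{k+1})+\p^{k+1}\in\partial F(\x^{k+1})$. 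Following the triangle-inequality computation after Proposition \ref{pro:BPMf}, and using Remark \ref{re:f1} to cover $f\in\cC^1$, I get (H2) with $b=(\underline\alpha^{-1}+5L)e^{rM}$.
\end{itemize}
One point needs care: Proposition \ref{pro:BPMf} requires $Lh+f$ convex, while the hypothesis is stated for $F$. I would either read the hypothesis as referring to the smooth part, or argue that only the $\nabla f$-difference enters. This is a gap I would resolve when writing the proof.

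\textbf{(H3), the main step.} Suppose $\x^k\to\bar\x$, and set $\boldsymbol w^k=\nabla f(\x^k)+\p^{k+1}$. By \eqref{eq:op}, $\boldsymbol w^k=-(\nabla h(\x^{k+1})-\nabla h(\x^k))/\alpha_k$, so $\{\boldsymbol w^k\}$ is bounded and hence so is $\{\p^{k+1}\}$. Since $\p^{k+1}\in\partial g(\x^{k+1})$, outer semicontinuity of the convex subdifferential gives $\dist(\p^{k+1},\partial g(\bar\x))\to0$. Telescoping with $T_K=\sum_{k<K}\alpha_k\to\infty$ gives
\[\frac{1}{T_K}\sum_{k<K}\alpha_k\boldsymbol w^k=-\frac{\nabla h(\x^K)-\nabla h(\x^0)}{T_K}.\]
The left side is bounded. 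Along a subsequence, $\nabla h(\x^K)/T_K\to\bar\blam$, and Lemma \ref{le:divide} gives $\bar\blam\in\cN_{\cl(\dom(h))}(\bar\x)\subseteq\cN_{\dom(g)}(\bar\x)$.

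The weighted averages of $\boldsymbol w^k$ approach the closed convex set $\nabla f(\bar\x)+\partial g(\bar\x)$, because averaging preserves vanishing distance to a convex set. Hence $-\bar\blam\in\nabla f(\bar\x)+\partial g(\bar\x)$. Since $\partial g(\bar\x)+\cN_{\dom(g)}(\bar\x)=\partial g(\bar\x)$ for convex $g$, this gives $\bz\in\nabla f(\bar\x)+\partial g(\bar\x)=\partial F(\bar\x)$, which is (H3).

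I expect the Cesàro/normal-cone argument to be the crux. The boundary coordinates of $\nabla h(\x^k)$ diverge, so one cannot simply pass to the limit in \eqref{eq:op}. The final claim, convergence to a stationary point with finite length, then follows from Theorem \ref{th:general}.
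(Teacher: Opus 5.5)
Your proposal is correct and follows the paper's proof essentially step for step. (H1) comes from the Bregman descent inequality together with Propositions \ref{pro:bound} and \ref{pro:BPM}(ii); (H2) comes from \eqref{eq:op} with Propositions \ref{pro:BPM}(iii), \ref{pro:BPMf} and Remark \ref{re:f1}; and (H3) uses the same telescoped weighted Ces\`aro averaging combined with Lemma \ref{le:divide}. Your outer-semicontinuity/convex-distance shortcut in (H3) is only a cosmetic variant of the paper's averaged subgradient inequality. On the point you flagged, the paper takes your first option: it explicitly invokes convexity of $Lh-f$ and applies Proposition \ref{pro:BPMf} with $H=f$, so the hypothesis is effectively that $Lh\pm f$ are convex.
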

	Theorems \ref{th:BPPM} and \ref{th:BPGM} complete our discrete-time convergence analysis for BPMs. Combined with Proposition \ref{pro:skl_general}, they provide, to the best of our knowledge, the first iterate convergence guarantee for BPPM and BPGM that applies to general closed kernels and composite objective functions. Consequently, we significantly improve the result in \cite{chen2026skl} that relies on specific kernel and problem structure.

	\subsection{Proof of Proposition \ref{pro:BPM}}
	We begin with a simple fact that will be repeatedly used.
	\begin{fact}\label{fact:ys}
		Let $h$ be a separable kernel with $\x,\y\in\inter(\dom(h))$. Let $\y^s\coloneqq \y+s(\x-\y)$ for $s\in[0,1]$. Then, we have
		\[\|\nabla h(\x)-\nabla h(\y^s)\|_2\leq \|\nabla h(\x)-\nabla h(\y)\|_2;\quad\forall~s\in[0,1].\]
		Consequently, one has $E_r(\x,\y^s)\leq E_r(\x,\y) $ and $E_{-r}(\x,\y^s)\geq E_{-r}(\x,\y) $ for all $r>0$.
	\end{fact}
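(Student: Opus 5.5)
The plan is to argue coordinatewise, using the separable structure $\nabla h(\z)=\varphi'(\z)$ with each coordinate depending only on $z_i$. Fix $s\in[0,1]$ and an index $i$. Set $y^s_i=y_i+s(x_i-y_i)$. Since $\dom(\varphi)$ is an interval and $x_i,y_i\in\inter(\dom(\varphi))$, the point $y^s_i$ lies on the segment between $y_i$ and $x_i$, hence in $\inter(\dom(\varphi))$. In particular, $\nabla h(\y^s)$ is well defined.

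The key step is monotonicity of $\varphi'$. By Definition \ref{def:kernel}(ii), $\varphi$ is (strongly) convex and $\cC^3$ on the interior, so $\varphi'$ is nondecreasing there (indeed strictly increasing). Suppose first that $y_i\le x_i$. Then $y_i\le y^s_i\le x_i$, which gives $\varphi'(y_i)\le\varphi'(y^s_i)\le\varphi'(x_i)$. Hence
\[0\le \varphi'(x_i)-\varphi'(y^s_i)\le \varphi'(x_i)-\varphi'(y_i).\]
The case $y_i\ge x_i$ is symmetric, with all signs reversed. In either case,
\[|\varphi'(x_i)-\varphi'(y^s_i)|\le|\varphi'(x_i)-\varphi'(y_i)|.\]

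Squaring these coordinatewise inequalities and summing over $i\in[n]$ gives
\[\|\nabla h(\x)-\nabla h(\y^s)\|_2\le\|\nabla h(\x)-\nabla h(\y)\|_2.\]
The consequences follow because $t\mapsto\exp(rt)$ is increasing and $t\mapsto\exp(-rt)$ is decreasing for $r>0$. Applying these maps to the norm inequality yields $E_r(\x,\y^s)\le E_r(\x,\y)$ and $E_{-r}(\x,\y^s)\ge E_{-r}(\x,\y)$.

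There is no real obstacle. The only points needing care are that the convexity of $\dom(\varphi)$ keeps $\y^s$ in the interior, and that separability reduces the vector comparison to a scalar monotonicity argument. A non-separable kernel would not admit this coordinatewise comparison.
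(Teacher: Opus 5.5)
Your proposal is correct and follows essentially the same argument as the paper. Both reduce the claim to the coordinatewise bound $|\varphi'(x_i)-\varphi'(y^s_i)|\le|\varphi'(x_i)-\varphi'(y_i)|$ via monotonicity of $\varphi'$, then use separability and the monotonicity of $t\mapsto\exp(\pm rt)$. You also spell out that $\y^s$ stays in $\inter(\dom(h))$, a detail the paper leaves implicit.
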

	\begin{proof}[Proof of Fact \ref{fact:ys}]
		Since $\y^{s}$ lies in the line between $\x$ and $\y$, we have $|\varphi^{\prime}(x_i)-\varphi^{\prime}(y^{s}_i)|\leq|\varphi^{\prime}(x_i)-\varphi^{\prime}(y_i)|$ for $i\in[n]$ by the strict increasing of $\varphi^{\prime}$. By the separable structure $\nabla h(\z)=(\varphi^{\prime}(z_1),\ldots,\varphi^{\prime}(z_n))$, we further have $\|\nabla h(\x)-\nabla h(\y^s)\|_2\leq \|\nabla h(\x)-\nabla h(\y)\|_2$ as desired. This, along with the definition of $E_r$, yields $E_r(\x,\y^s)\leq E_r(\x,\y) $ and $E_{-r}(\x,\y^s)\geq E_{-r}(\x,\y)$ for $r>0$. 
	\end{proof}
	We are then ready to prove Proposition \ref{pro:BPM}.
	
	(i) Our first step is to show that for any bounded interval $\cV\subseteq\inter(\dom(\varphi))$,
	\begin{equation}\label{eq:self}
		|\varphi^{\prime\prime\prime}(x)| =\cO\left(\varphi^{\prime\prime}(x)^{2}\right)\quad{\rm on }\quad\cV.
	\end{equation}
	By Proposition \ref{pro:estimate}, for any $\bar{x}\in\bd(\dom(\varphi))$, there exists $\Delta_{\bar{x}}>0$ and an exponent $\alpha_{\bar{x}}\in(1,2]$ such that $|\varphi^{\prime\prime\prime}(x)|=\Theta(\varphi^{\prime\prime}(x)^{\alpha_{\bar{x}}} )$ on $[\bar{x}-\Delta_{\bar{x}},\bar{x}+\Delta_{\bar{x}}]\cap \inter(\dom(\varphi))$. Note that $\varphi^{\prime\prime}(x)=\Omega(1)$ on any bounded subset of $\inter(\dom(\varphi))$ by Assumption \ref{assum:general} (i) and strong convexity of $\varphi$. We see that for any $\bar{x}\in\bd(\dom(\varphi))$,
	\begin{equation}\label{eq:boundary}
		|\varphi^{\prime\prime\prime}(x)|=\Theta\left(\varphi^{\prime\prime}(x)^{\alpha_{\bar{x}}} \right)=\cO\left(\varphi^{\prime\prime}(x)^{2}\right)\quad{\rm on }\quad[\bar{x}-\Delta_{\bar{x}},\bar{x}+\Delta_{\bar{x}}]\cap \inter(\dom(\varphi)).
	\end{equation}   
	On the other hand, for any bounded interval $\cV\subseteq \inter(\dom(\varphi))$, the continuity of $\varphi^{\prime\prime\prime}$ on $\inter(\dom(\varphi))$ and $\varphi^{\prime\prime}(x)=\Omega(1)$ imply
	\[	|\varphi^{\prime\prime\prime}(x)|=\cO\left(\varphi^{\prime\prime}(x)^{2}\right)\quad{\rm on }\quad\tilde{\cV}=\cV\setminus\bigcup_{\bar{x}\in\bd(\dom(\varphi))}[\bar{x}-\Delta_{\bar{x}},\bar{x}+\Delta_{\bar{x}}], \]
	where the set $\tilde{\cV}$ is clearly a bounded subset of $\inter(\dom(\varphi))$ with $\cl(\tilde{\cV})\subseteq\inter(\dom(\varphi))$.
	
	The above estimate, together with \eqref{eq:boundary}, yields \eqref{eq:self}. That is, there exists a scalar $r_{\cV}>0$ such that $|\varphi^{\prime\prime\prime}(x)| \leq r_{\cV}\cdot \varphi^{\prime\prime}(x)^{2}$  on $\cV$. It follows that for any bounded interval $\cV\subseteq\inter(\dom(\varphi))$ and $x,y\in\cV$, 
	\begin{equation}\label{eq:self2}
		\begin{aligned}
			\left|\log(\varphi^{\prime\prime}(x))-\log(\varphi^{\prime\prime}(y))\right|&=\left|\int_{y}^x\frac{\varphi^{\prime\prime\prime}(s)}{\varphi^{\prime\prime}(s)}{\rm d}s\right|\\
			&\leq \left|\int_{y}^x\frac{\left|\varphi^{\prime\prime\prime}(s)\right|}{\varphi^{\prime\prime}(s)}{\rm d}s\right|\\
			&\leq r_{\cV}\left|\int_{y}^x\varphi^{\prime\prime}(s){\rm d}s\right|\\
			&=r_{\cV}\left|\varphi^{\prime}(x)-\varphi^{\prime}(y) \right|.
		\end{aligned}
	\end{equation}
	Now, given a bounded convex subset $\cU\subseteq\inter(\dom(h))$, we let $\bar{\cV}\subseteq\inter(\dom(\varphi))$ be a bounded interval such that $x_i\in\bar{\cV}$ for all $\x\in\cU$, $i\in[n]$. Let $r:=r_{\bar{\cV}}>0$.
	Then, for $\x,\y\in\cU$, $i\in[n]$, the inequality \eqref{eq:self2} yields
	\[\begin{aligned}	
		&\frac{\varphi^{\prime\prime}(y_i)}{\varphi^{\prime\prime}(x_i)}\leq\exp\left(r\left|\varphi^{\prime}(x_i)-\varphi^{\prime}(y_i)\right|\right)\leq E_{r}(\x,\y);\\
		&\frac{\varphi^{\prime\prime}(y_i)}{\varphi^{\prime\prime}(x_i)}\geq\exp\left(-r\left|\varphi^{\prime}(x_i)-\varphi^{\prime}(y_i)\right|\right)\geq E_{-r}(\x,\y),
	\end{aligned}   \]
	which lead to the desired inequality on the Hessian $\nabla^2h(\y)=\Diag(\varphi^{\prime\prime}(y_1),\ldots,\varphi^{\prime\prime}(y_n))$.
	
	(ii) Write $\y^{s_1}=\y+s_1(\x-\y)$, $\y^{s_1,s_2}=\y+s_2(\y^{s_1}-\y)$ for $s_1,s_2\in[0,1]$. Note $h(\x)-h(\y)=\int_0^1\left<\nabla h(\y^{s_1}),\x-\y\right>{\rm d}s_1$. We have
	\begin{equation}\label{eq:Dhint}
		\begin{aligned}
			D_h(\x,\y)&= h(\x)-h(\y)-\left<\nabla h(\y),(\x-\y)\right> \\
			&=\int_0^1\left<\nabla h(\y^{s_1})-\nabla h(\y),\x-\y\right>{\rm d}s_1 \\
			&=\int_0^1s_1\left<\int_0^1\nabla h^2\left(\y^{s_1,s_2}\right){\rm d}s_2\cdot(\x-\y),\x-\y\right>{\rm d}s_1. \\
		\end{aligned} 
	\end{equation}
	We then estimate the Hessian inside.
	Fact \ref{fact:ys} yields $E_{-r}(\x,\y^{s_1,s_2})\geq E_{-r}(\x,\y^{s_1})\geq E_{-r}(\x,\y)>0$.
	Then, the result in (i) implies 
	\[ \nabla h^2\left(\y^{s_1,s_2}\right)\succeq E_{-r}(\x,\y^{s_1,s_2})\nabla^2h(\x)\succeq E_{-r}(\x,\y)\nabla^2h(\x). \] 
	It follows that $\int_0^1\nabla h^2(\y^{s_1,s_2}){\rm d}s_2\succeq E_{-r}(\x,\y)\nabla^2h(\x)$. Revisiting \eqref{eq:Dhint}, we obtain
	\[\begin{aligned}
		D_h(\x,\y)&\geq\int_0^1s_1{\rm d}s_1\cdot E_{-r}(\x,\y) \left<\nabla^2h(\x)(\x-\y),\x-\y\right>\\
		&=\frac12E_{-r}(\x,\y)\cdot\left\| \nabla^2h(\x)^{\frac12} (\x-\y)\right\|^2_2. 
	\end{aligned}\]
	(iii) Write $\y^s=\y+s(\x-\y)$ for $s\in[0,1]$. Fact \ref{fact:ys} gives $E_r(\x,\y^s)\leq E_r(\x,\y)$. By the result in (i), we have
	\begin{equation}\label{eq:Er}
		\nabla^2h(\y^s)\preceq E_r(\x,\y^s)\nabla^2h(\x)\preceq E_r(\x,\y)\nabla^2h(\x).
	\end{equation}
	We then estimate $\|\nabla^2h(\x)^{-\frac12} (\nabla h(\x)-\nabla h(\y))\|^2_2$:
	\begin{equation}\label{eq:h_h2}
		\begin{aligned}
			&\left\|\nabla^2h(\x)^{-\frac12} (\nabla h(\x)-\nabla h(\y))\right\|^2_2\\
			=&\left\| \nabla^2h(\x)^{-\frac12}\int_0^1\nabla^2h(\y^s)(\x-\y){\rm d}s   \right\|^2_2 \\
			\leq& \int_0^1\left\| \nabla^2h(\x)^{-\frac12}\nabla^2h(\y^s)(\x-\y)\right\|^2_2{\rm d}s\\
			=&\int_0^1\left<\nabla^2h(\x)^{-1}\nabla^2h(\y^s)(\x-\y),\nabla^2h(\y^s)(\x-\y)\right>{\rm d}s,
		\end{aligned}
	\end{equation}
	where the inequality is due to Jensen's inequality.
	
	The above estimate, along with \eqref{eq:Er} and its corollary $\nabla^2h(\x)^{-1}\preceq E_r(\x,\y)\nabla^2h(\y^s)^{-1}$, further implies
	\[ \begin{aligned}
		&\left\|\nabla^2h(\x)^{-\frac12} (\nabla h(\x)-\nabla h(\y))\right\|^2_2\\
		\leq& E_r(\x,\y)\int_0^1\left<\nabla^2h(\y^s)^{-1}\nabla^2h(\y^s)(\x-\y),\nabla^2h(\y^s)(\x-\y)\right>{\rm d}s\\
		=&E_r(\x,\y)\int_0^1\left<\x-\y,\nabla^2h(\y^s)(\x-\y)\right>{\rm d}s\\
		\leq& E_r(\x,\y)^2\int_0^1\left<\x-\y,\nabla^2h(\x)(\x-\y)\right>{\rm d}s\\
		=&  E_r(\x,\y)^2 \left\| \nabla^2h(\x)^{\frac12}(\x-\y)\right\|^2_2.
	\end{aligned}
	\]
	By taking a square root in both sides, we obtain the desired inequality.
	
	\subsection{Proof of Proposition \ref{pro:BPMf}}\label{sec:BPMf}
	Let $G=2Lh+H$. Since $Lh+H$ is convex, we have
	$\nabla^2 G=2L\nabla^2h+\nabla^2H\succeq L\nabla^2h\succ\bz.$ 
	By the triangle inequality and $ \nabla H=\nabla G-2L\nabla h$, it is clear that
	\begin{equation}\label{eq:h_F}
		\begin{aligned}
			&\left\|\nabla^2h(\x)^{-\frac12} (\nabla H(\x)-\nabla H(\y)) \right\|_2\\
			\leq& \left\|\nabla^2h(\x)^{-\frac12} (\nabla G(\x)-\nabla G(\y)) \right\|_2+ 2L\left\|\nabla^2h(\x)^{-\frac12} (\nabla h(\x)-\nabla h(\y)) \right\|_2\\
			\leq&\left\|\nabla^2h(\x)^{-\frac12} (\nabla G(\x)-\nabla G(\y)) \right\|_2+ 2LE_r(\x,\y)\left\|\nabla^2h(\x)^{\frac12} (\x-\y) \right\|_2,\\
		\end{aligned}
	\end{equation}
	where the second inequality is due to Proposition \ref{pro:BPM} (iii).
	
	We then estimate $\|\nabla^2h(\x)^{-\frac12} (\nabla G(\x)-\nabla G(\y))\|_2$. Write $\y^s=\y+s(\x-\y)$ for $s\in[0,1]$. Fact \ref{fact:ys} and Proposition \ref{pro:BPM} (i) give $\nabla^2h(\y^s)\preceq E_r(\x,\y)\nabla^2h(\x)$. Similar to \eqref{eq:h_h2}, we further have 
	\[\begin{aligned}
		&\left\|\nabla^2h(\x)^{-\frac12} (\nabla G(\x)-\nabla G(\y))\right\|^2_2\\
		=&\left\| \nabla^2h(\x)^{-\frac12}\int_0^1\nabla^2G(\y^s)(\x-\y){\rm d}s   \right\|^2_2 \\
		\leq& \int_0^1\left\| \nabla^2h(\x)^{-\frac12}\nabla^2G(\y^s)(\x-\y)\right\|^2_2{\rm d}s\\
		\leq&\int_0^1\left<\nabla^2h(\x)^{-1}\nabla^2G(\y^s)(\x-\y),\nabla^2G(\y^s)(\x-\y)\right>{\rm d}s.
	\end{aligned}
	\]
	Note that $G=2Lh+H$, the convexity of $Lh-H$, and $\nabla^2h(\y^s)\preceq  E_r(\x,\y)\nabla^2h(\x)$ imply
	\[\nabla^2G(\y^s)=2L\nabla^2h(\y^s)+\nabla^2H(\y^s)\preceq 3L\nabla^2h(\y^s)\preceq 3L E_r(\x,\y)\nabla^2h(\x). \]
	Since $\nabla^2G,\nabla^2h\succ\bz$, it follows that $\nabla^2h(\x)^{-1}\preceq 3L  E_r(\x,\y)\nabla^2G(\y^s)^{-1}$. We further have
	\[\begin{aligned}
		&\left\|\nabla^2h(\x)^{-\frac12} (\nabla G(\x)-\nabla G(\y))\right\|^2_2\\
		\leq& 3L  E_r(\x,\y)\int_0^1\left<\nabla^2G(\y^s)^{-1}\nabla^2G(\y^s)(\y-\x),\nabla^2G(\y^s)(\x-\y)\right>{\rm d}s\\
		=& 3L  E_r(\x,\y)\int_0^1\left<\y-\x,\nabla^2G(\y^s)(\x-\y)\right>{\rm d}s\\
		\leq&\left(3L E_r(\x,\y)\right)^2\int_0^1\left<\y-\x,\nabla^2h(\x)(\x-\y)\right>{\rm d}s\\
		=& \left(3L E_r(\x,\y)\right)^2\left\|\nabla^2h(\x)^{\frac12} (\x-\y) \right\|^2_2.
	\end{aligned}\]
	Taking a square root in both sides, we obtain
	\[\left\|\nabla^2h(\x)^{-\frac12} (\nabla G(\x)-\nabla G(\y))\right\|_2
	\leq 3L E_r(\x,\y) \left\|\nabla^2h(\x)^{\frac12} (\x-\y) \right\|_2.  \]
	This, together with \eqref{eq:h_F}, yields the desired inequality.
	
	\subsection{Proof of Remark \ref{re:f1}}\label{sec:remarkf1}
	Given that the condition $H\in\cC^2(\cU)$ in Proposition \ref{pro:BPMf} is relaxed to $H\in\cC^1(\cU)$, to apply the arguments in Sec. \ref{sec:BPMf} to proving \eqref{eq:hessianF}, we need the following claim:
	
	\textbf{Claim 1.}  The gradient $\nabla H$ is locally Lipschitz continuous on $\cU$. Furthermore, for almost every pair $(\x,\y)\in\cU\times\cU$, $\nabla^2H(\y^s)$ exists for almost every $s\in[0,1]$, where $\y^s=\y+s(\x-\y)$. 
	
	Observe that when Claim 1 holds, one has
	\[\nabla H(\x)-\nabla H(\y)=\int_0^1\nabla^2H(\y^s)(\x-\y){\rm d}s ~\text{ for almost every }~(\x,\y)\in\cU\times\cU.\] This enables the arguments in Sec. \ref{sec:BPMf}  to establish \eqref{eq:hessianF} for almost every $(\x,\y)\in\cU\times\cU$. Then, by its continuity w.r.t. $(\x,\y)$, \eqref{eq:hessianF}  holds for all $\x,\y\in\cU$. 
	Therefore, to establish Remark \ref{re:f1}, it suffices to prove Claim 1. 
	
	We first show that $\nabla H$ is locally Lipschitz continuous on $\cU$.
	Note that the convexity of $Lh+F$ and $Lh-F$ implies
	\begin{equation}\label{eq:convexity}
		-L D_h(\y,\z)
		\le H(\y)-H(\z)-\langle \nabla H(\z),\y-\z\rangle
		\le L D_h(\y,\z), \quad\forall~\y,\z\in\cU.
	\end{equation}
	Restrict $\y,\z$ in $\B(\x,r_{\x})$ for some $\x\in\cU$, $r_{\x}>0$ so that $\B(\x,r_{\x})\subseteq\inter(\dom(h))$. Then, the continuity of $\nabla^2h$ ensures $\nabla^2h(\y)\preceq a_{\x}\bI$ for all $\y\in\B(\x,r_{\x})$ for some $a_{\x}>0$. It follows that 
	\[ D_h(\y,\z)\leq \frac{a_{\x}}{2}\left\|\y-\z\right\|^2_2,\qquad\forall~\y,\z\in\B(\x,r_{\x}). \]
	This, together with \eqref{eq:convexity}, implies that for all $\y,\z\in\cU\cap\B(\x,r_{\x})$,
	\[ 	-\frac{La_{\x}}{2}\left\|\y-\z\right\|^2_2
	\le H(\y)-H(\z)-\langle \nabla H(\z),\y-\z\rangle
	\le \frac{La_{\x}}{2}\left\|\y-\z\right\|^2_2. \]
	These inequalities ensure that the functions $\y\mapsto {La_{\x}\|\y\|^2_2}/2+H(\y)$, $\y\mapsto {La_{\x}\|\y\|^2_2}/2-H(\y)$ are convex near $\x$ for every $\x\in\cU$. Hence, $H$ and $-H$ are lower-$C^2$ on $\cU$ by \cite[Theorem 10.33]{rockafellar2009variational}, i.e., $H$ is both lower- and upper-$C^2$ on $\cU$. Thanks to \cite[Theorem 13.34]{rockafellar2009variational}, we know that $\nabla H$ is strictly continuous (locally Lipschitz continuous) on the open set $\cU$.
	
	The remaining task is showing that for almost every $(\x,\y)\in\cU\times\cU$, the Hessian $\nabla^2H(\y^s)$ exists a.e. on $[0,1]$. 
	
	By Rademacher’s theorem (see, e.g., \cite[Theorem 3.2]{evans2025measure}),  $\nabla H$ is differentiable a.e. on $\cU$. That is, $\nabla^2 H$ exists a.e. on $\cU$. 
	Let $\cV$ denote the subset of $\cU$ where $\nabla^2 H$ does not exist, and $\cL^n$ denote the $n$-dimensional Lebesgue measure. Then, we have $\cL^n(\cV)=0$. 
	
	Define the function $G:\cU\times\cU\times(0,1)\to\R^n$  and the set $\cS\subseteq \cU\times\cU\times(0,1)$ by
	\[G(\x,\y,s)=\y+s(\x-\y);\qquad \cS\coloneqq G^{-1}(\cV).  \]
	It is clear that $G$ is a continuous measurable function and hence $\cS$ is a measurable set.
	Our strategy is to show $\cL^{2n+1}(\cS)=0$, which, together with Fubini's theorem and  $\cS\subseteq \cU\times\cU\times(0,1)$, would imply \[0=\cL^{2n+1}(\cS)=\int_{\cU\times\cU\times(0,1)}\hat{\delta}_{\cS}(\x,\y,s){\rm d}(\x,\y,s)=\int_{\cU\times\cU}\left(\int_0^1\hat{\delta}_{\cS}(\x,\y,s){\rm d}s\right){\rm d}(\x,\y),\]
	where $\hat{\delta}_{\cS}(\x,\y,s)=1$ if $(\x,\y,s)\in\cS$; and $\hat{\delta}_{\cS}(\x,\y,s)=0$ otherwise.
	
	Then, it would follow that for almost every $(\x,\y)\in\cU\times\cU$, $\hat{\delta}_{\cS}(\x,\y,s)=0$ for almost every $s\in(0,1)$. 
	Note that by definition we have
	\[\hat{\delta}_{\cS}(\x,\y,s)=0\Longleftrightarrow (\x,\y,s)\notin\cS\Longleftrightarrow \y^s\notin \cV\Longleftrightarrow \nabla^2 H(\y^s)\text{ exists}.\]
	We would complete the proof of Claim 1 if $\cL^{2n+1}(\cS)=0$ was established.
	
	To show $\cL^{2n+1}(\cS)=0$, we consider the coarea formula \cite[Theorem 3.10]{evans2025measure}. Observe that $G$ is a Lipschitz continuous function due to the boundedness of $\cU$ and $(0,1)$. Kirszbraun's theorem (see, e.g., \cite[Theorem 3.1]{evans2025measure}) ensures that there is a Lipschitz continuous extension of $G$ to $\R^{2n+1}$, denoted by $\tilde{G}$. Clearly, we have 
	\begin{equation}\label{eq:tildeG}
		\nabla \tilde{G}(\x,\y,s)=\nabla G(\x,\y,s)\quad\text{ on }\quad\cU\times\cU\times(0,1)\supseteq\cS.
	\end{equation}
	Let $J\tilde{G}(\x,\y,s)$ be the Jacobian of $\tilde{G}$ at $(\x,\y,s)\in\cS$ in the sense of \cite[Definition 3.4 and Theorem 3.6]{evans2025measure}, i.e.,
	\begin{equation}
		J\tilde{G}(\x,\y,s)=\sqrt{\det(\nabla \tilde{G}(\x,\y,s)\nabla \tilde{G}(\x,\y,s)^{\top})}.\label{eq:Jaco}
	\end{equation}
	The coarea formula \cite[Theorem 3.10]{evans2025measure} asserts
	\begin{equation}\label{eq:area}
		\int_{\cS} J \tilde{G}(\x,\y,s){\rm d}(\x,\y,s)
		= \int_{\R^n} \cH^{n+1}\left(\cS\cap \tilde{G}^{-1}(\z)\right){\rm d}\z,
	\end{equation}
	where $\cH^{n+1}$ denotes the $(n+1)$-dimensional Hausdorff measure, and $ \cH^{n+1}(\cS\cap \tilde{G}^{-1}(\z))$ is bounded for $\z\in\R^n$ due to the boundedness of $\cS$. 
	
	Observe that $J \tilde{G}(\x,\y,s)=J_n {G}(\x,\y,s)$ on $\cS$ due to \eqref{eq:tildeG} and the definition \eqref{eq:Jaco}. Moreover,  by the definitions of $\tilde{G}$ and $\cS$, we have 
	\[\cS\cap \tilde{G}^{-1}(\z)=G^{-1}\left(\cV\right)\cap \tilde{G}^{-1}(\z) =G^{-1}\left(\cV\right)\cap {G}^{-1}(\z),\]
	which is nonempty only if $\z\in\cV$. That is, $\cH^{n+1}(\cS\cap \tilde{G}^{-1}(\z))>0$ only if $\z\in\cV$. We see that \eqref{eq:area} amounts to
	\begin{equation}\label{eq:area2}
		\int_{\cS} J {G}(\x,\y,s){\rm d}(\x,\y,s)
		= \int_{\cV} \cH^{n+1}\left(\cS\cap \tilde{G}^{-1}(\z)\right){\rm d}\z=0, 
	\end{equation}
	where the second equality is due to $\cL^n(\cV)=0$ and the boundedness of $\cH^{n+1}(\cS\cap \tilde{G}^{-1}(\z))$.
	
	Finally, note that $\nabla G(\x,\y,s)=\begin{bmatrix}
		s\bI_n&(1-s)\bI_n&\x-\y
	\end{bmatrix}$ is a linear map with full rank on $\cU\times\cU\times(0,1)\supseteq\cS$. We see that $J {G}(\x,\y,s)=\det(\nabla G(\x,\y,s))=\sqrt{\det(\nabla G(\x,\y,s)\nabla G(\x,\y,s)^{\top})}>0$ on $\cS$ by \cite[Theorem 3.6]{evans2025measure}, and further $J {G}$ is continuous and thus measurable. These, combined with \eqref{eq:area2}, yield $\cL^{2n+1}(\cS)=0$. We complete the proof.
	
	\subsection{Proof of Proposition \ref{pro:bound}}\label{appen:bound}
	Suppose that there is a subsequence $\{k_l\}_{l\geq0}\subseteq\N$ such that $\{\nabla h(\x^{k_l+1})-\nabla h(\x^{k_l})\}_{l\geq0}$ is unbounded. Our strategy is to yield a contradiction through the optimality conditions \eqref{eq:op}.
	
	Observe that the boundedness of $\{\x^k\}_{k\geq0}$ and continuity of $\nabla f$ ensure the boundedness of $\{\nabla f(\x^{k+q})\}_{k\geq0}$. This, together with \eqref{eq:op} and $\|\nabla h(\x^{k_l+1})-\nabla h(\x^{k_l})\|_2\to\infty$, implies
	$\|\p^{k_l+1}\|_2\to\infty$.
	By passing to a subsequence if necessary, we may assume that 
	\begin{equation}\label{eq:px}
		\x^{k_l}\to\x^*\in\dom(F);\qquad \frac{\p^{k_l+1} }{\|\p^{k_l+1}\|_2}\to\p^*. 
	\end{equation}
	We then characterize $\p^k\in\partial g(\x^k)$. We first present the following lemma to characterize the subdifferential of a convex function that is Lipschitz continuous on its domain.
	\begin{lemma}\label{le:f2}
		Let $H:\R^n\to\overline{\R}$ be a convex function that is $L_H$-Lipschitz continuous on the nonempty domain $\dom(H)$ with $L_H>0$. Then, we have
		\[\partial H(\x)\subseteq \B(\bz,L_H)+\cN_{\dom(H)}(\x),\qquad\forall~\x\in\dom(H).\]
	\end{lemma}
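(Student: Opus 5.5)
We prove Lemma \ref{le:f2} by writing $H$ as the sum of a globally Lipschitz convex function and the indicator of its domain, and then applying the convex subdifferential sum rule.

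\textbf{Step 1: extension.} Set $\cD:=\dom(H)$. Define the Lipschitz (Pasch--Hausdorff) extension
\[\hat H(\x):=\inf_{\z\in\cD}\left\{H(\z)+L_H\|\x-\z\|_2\right\},\qquad \x\in\R^n.\]
This function has the following properties:
\begin{itemize}
\item It is convex, since it is the infimal convolution of the convex function $H$ with the convex function $L_H\|\cdot\|_2$.
\item It is finite everywhere and $L_H$-Lipschitz on $\R^n$.
\item It coincides with $H$ on $\cD$. Indeed, for $\x\in\cD$, taking $\z=\x$ gives $\hat H(\x)\le H(\x)$. Conversely, Lipschitz continuity of $H$ on $\cD$ gives $H(\z)+L_H\|\x-\z\|_2\ge H(\x)$ for every $\z\in\cD$.
\end{itemize}
Since $\hat H=H$ on $\cD$ and $H=+\infty$ off $\cD$, we have $H=\hat H+\delta_{\cD}$. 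The set $\cD$ is convex because $H$ is convex, so $\delta_{\cD}$ is a convex function.

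\textbf{Step 2: sum rule.} The function $\hat H$ is finite and continuous on all of $\R^n$, so the qualification condition for the convex sum rule (e.g., \cite[Theorem 23.8]{rockafellar2015convex}) holds at any point of $\cD$. Note that closedness of $\cD$ is not needed. For $\x\in\cD$ this gives
\[\partial H(\x)=\partial\hat H(\x)+\partial\delta_{\cD}(\x)=\partial\hat H(\x)+\cN_{\cD}(\x).\]
Here the limiting subdifferential agrees with the convex one, because $H$ is convex.

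\textbf{Step 3: bound on $\partial\hat H$.} Let $\v\in\partial\hat H(\x)$. The subgradient inequality gives
\[\left<\v,\y-\x\right>\le \hat H(\y)-\hat H(\x)\le L_H\|\y-\x\|_2\quad\text{for all }\y\in\R^n.\]
Choosing $\y=\x+\v$ yields $\|\v\|_2^2\le L_H\|\v\|_2$, so $\|\v\|_2\le L_H$. Hence $\partial\hat H(\x)\subseteq\B(\bz,L_H)$, and combining with Step 2 gives the claimed inclusion $\partial H(\x)\subseteq\B(\bz,L_H)+\cN_{\cD}(\x)$.

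The only delicate point is in Step 1: checking that the extension agrees with $H$ on $\cD$ and remains convex. Both are standard consequences of the infimal-convolution structure.
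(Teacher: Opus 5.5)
Your proof is correct and takes the same route as the paper. The paper also extends $H$ to a globally $L_H$-Lipschitz convex function, writes $H$ as that extension plus $\delta_{\dom(H)}$, applies the sum rule, and bounds the extension's subgradients by $L_H$. The differences are minor. You construct the extension explicitly as $\inf_{\z\in\cD}\{H(\z)+L_H\|\x-\z\|_2\}$, where the paper simply cites \cite[Theorem 1]{cobzas1978norm}. You also use the convex sum rule \cite[Theorem 23.8]{rockafellar2015convex}, which, as you note, does not need $\dom(H)$ to be closed.
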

	\begin{proof}[Proof of Lemma \ref{le:f2}]
		Thanks to \cite[Theorem 1]{cobzas1978norm}, we know that there is an extension function $\tilde{H}:\R^n\to\R$ that is proper convex and $L_H$-Lipschitz continuous on $\R^n$ such that $\tilde{H}(\x)=H(\x)$ for $\x\in\dom(H)$. In particular, we have
		\[ H=\tilde{H}+\delta_{\dom(H)}. \]
		Clearly, $\inter(\dom(\tilde{H}))\cap\dom(H)=\R^n\cap\dom(H)=\dom(H)\neq\emptyset$. By the subdifferential sum rule \cite[Corollary 10.9]{rockafellar2009variational}, we have
		\[\partial H(\x)=\partial \tilde{H}(\x)+\partial \delta_{\dom(H)}(\x)=\partial \tilde{H}(\x)+\cN_{\dom(H)}(\x),\qquad\forall~\x\in\dom(H),\]
		where the second equality is due to the convexity of $\dom(H)$ and \cite[Exercise 8.14]{rockafellar2009variational}.
		
		The above equality, together with $\partial \tilde{H}(\x)\subseteq\B(\bz,L_H)$, which is ensured by the $L_H$-Lipschitz continuity of $\tilde{H}$ on $\R^n$, yields the desired inclusion result.
	\end{proof}
	We then apply Lemma \ref{le:f2} to the function $g$ restricted in a bounded region.
	Since $\{\x^k\}_{k\geq0}$ is bounded and $g$ is locally Lipschitz on $\dom(g)$, there exist scalars $R,L_g>0$ such that $\x^k\in\inter(\B(\bz,R))$, $k\geq0$, and $g+\delta_{\B(\bz,R)}$ is $L_g$-Lipschitz continuous on $\dom(g)\cap\B(\bz,R)$. By Lemma \ref{le:f2}, we have the following inclusion:
	\[ \partial\left(g+\delta_{\B(\bz,R)}\right)(\x)\subseteq \B(\bz,L_g)+\cN_{\dom(g)\cap \B(\bz,R)}(\x),\qquad\forall~\x\in\dom(g)\cap\B(\bz,R). \]
	Note that $\x^k\in\inter(\B(\bz,R))\cap\dom(g)$. We have $\partial\delta_{\B(\bz,R)}(\x^k)=\cN_{\B(\bz,R)}(\x^k)=\{\bz\}$. Further, the sum subdifferential rule \cite[Corollary 10.9]{rockafellar2009variational} and intersection rule for normal cones \cite[Theorem 6.42]{rockafellar2009variational} imply that for $k\geq0$, 
	\[\begin{aligned}
		\partial\left(g+\delta_{\B(\bz,R)}\right)(\x^k)&=	\partial g(\x^k)+\{\bz\}=\partial g(\x^k);\\
		\cN_{\dom(g)\cap \B(\bz,R)}(\x^k)&=\cN_{\dom(g)}(\x^k)+\{\bz\}=\cN_{\dom(g)}(\x^k).
	\end{aligned}\]
	These, together with the previous inclusion, yield $\partial g(\x^k)\subseteq \B(\bz,L_g)+\cN_{\dom(g)}(\x^k)$, i.e.,
	\begin{equation}\label{eq:decompose}
		\p^k=\hat{\p}^k+\d^k\qquad\text{ with }\quad  \|\hat{\p}^k\|\leq L_g,~\d^k\in\cN_{\dom(g)}(\x^k),\quad\forall~k\geq0.
	\end{equation}
	Next, we separately analyze $\p^*$ under two cases of Assumption \ref{assum:general2} to yield a contradiction.
	
	\subsubsection{Case 1. The domain is a polyhedral.}
	In this case, $\dom(g)=\dom(F)=\{\y:\A\x\leq\b\}$ for some $\A=(\a_1,\ldots,\a_m)^{\top}\in\R^{m\times n}$, $\b\in\R^m$. Define the active index set $\cA(\x)=\{i:(\A\x-\b)_i=0\}$. By \cite[Theorem 6.46]{rockafellar2009variational}, we have 
	\[\cN_{\dom(g)}(\x)= \left\{\A^{\top}\blam:\blam\in\R^m_+,\lambda_{i}^k=0,i\notin\cA(\x)\right\}. \]
	This, together with \eqref{eq:decompose}, yields
	\begin{equation}\label{eq:lambda}
		\p^k=\hat{\p}^k+\A^{\top}\blam^k,\quad \|\hat{\p}^k\|_2\leq L_g,\quad \blam^k\in\R^m_+,\quad\lambda_{i}^k=0,\quad i\notin\cA(\x^k).  
	\end{equation}
	By passing to a subsequence if necessary, we may assume that $\cA(\x^{k_l+1})\equiv\cA_0$ for some $\cA_0\subseteq[m]$. Let $\cB_0=[n]\setminus\cA_0$.  We have  $\blam^{k_l+1}_{\cB_0}\equiv\bz$, and hence
	\[\A^{\top}\blam^{k_l+1}\in\cD_0:=\left\{\A^{\top}\blam:\blam\in\R^m_+,\blam_{\cB_0}=\bz\right\}.\] 
	Then, the convergence \eqref{eq:px} and the decomposition \eqref{eq:lambda} yield
	\[\p^*=\lim_{k\to\infty}\frac{\hat{\p}^{k_l+1}+\A^{\top}\blam^{k_l+1}}{\|\p^{k_l+1}\|_2}=\lim_{k\to\infty}\frac{\A^{\top}\blam^{k_l+1}}{\|\p^{k_l+1}\|_2}\in\cD_0,\]
	where the second equality is due to $\|\hat{\p}^{k_l+1}\|_2\leq L_g$; the inclusion uses $\A^{\top}\blam^{k_l+1}/\|\p^{k_l+1}\|_2\in\cD_0$ and  the closedness of $\cD_0$ .
	
	Hence, there is a vector $\blam^*\in\R^m_+$ with $\blam^*_{\cB_0}=\bz$ such that
	\[\p^*=\A^{\top}\blam^*.\]
	Combining $\blam^*_{\cB_0}=\bz$ and $(\A\x^{k_l+1}-\b)_{\cA_0}=\bz$, we obtain
	\[ \left<\blam^*,\A\x^{k_l+1}-\b\right>=\left<\blam^*_{\cA_0},(\A\x^{k_l+1}-\b)_{\cA_0}\right>+\left<\blam^*_{\cB_0},(\A\x^{k_l+1}-\b)_{\cB_0}\right> =0.\]
	Note that $\left<\blam^*,\A\x^{k_l}-\b\right>\leq0$ by $\blam^*\geq\bz$ and $\x^{k_l}\in\dom(F)$, i.e., $\A\x^{k_l}-\b\leq\bz$. We see that
	\begin{equation}\label{eq:contra1}
		\left<\p^*,\x^{k_l+1}-\x^{k_l}\right>=\left<\A^{\top}\blam^*,\x^{k_l+1}-\x^{k_l}\right>=\left<\blam^*,\A\x^{k_l+1}-\b\right>-\left<\blam^*,\A\x^{k_l}-\b\right>\geq0.
	\end{equation}
	To yield a contradiction to \eqref{eq:contra1}, we define the index sets
	\[\cI^*_+:=\left\{i:p^*_i>0\right\},\qquad   \cI^*_-:=\left\{i:p^*_i<0\right\}.  \]
	Clearly, $\cI^*_+\cup\cI^*_-\neq\emptyset$ since $\p^*\neq\bz$ with $\|\p^*\|_2=1$. Then, the convergence \eqref{eq:px} implies $p^{k_l+1}_i\to+\infty$ for $i\in\cI^*_+$; and  $p^{k_l+1}_i\to-\infty$ for $i\in\cI^*_-$. Recall the boundedness of $\{\nabla f(\x^k)\}_{k\geq0}$. We know that there exists an index $K>0$ such that for $k_l\geq K$,
	\[	\nabla_i f(\x^{k_l+q})+p^{k_l+1}_i>0,~\quad\forall~i\in\cI^*_+; \qquad	\nabla_i f(\x^{k_l+q})+p^{k_l+1}_i<0,\quad\forall~i\in\cI^*_-; \]
	This, together with the equality \eqref{eq:op}, the separable structure $\nabla h(\z)=(\varphi^{\prime}(z_1),\ldots,\varphi^{\prime}(z_n))$, and strict increasing of $\varphi^{\prime}$, implies that for $k_l\geq K$,
	\[  x^{k_l+1}_i<x^{k_l}_i,\quad\forall~i\in\cI^*_+; \qquad x^{k_l+1}_i>x^{k_l}_i,\quad\forall~i\in\cI^*_-.  \]
	Recall $p^*_i>0$ for $i\in\cI^*_+$; and $p^*_i<0$ for $i\in\cI^*_-$. It follows that 
	\[\left<\p^*,\x^{k_l+1}-\x^{k_l}\right><0,\]
	which contradicts \eqref{eq:contra1}.
	
	\subsubsection{Case 2. The domain is not a polyhedral.}
	In this case, Assumption \ref{assum:general2} assumes that $\dom(g)=\dom(F)=\cF\cap{\cl(\dom(h))}$ with $\cF$ closed convex and
	\begin{equation}\label{eq:contrad2}
		\cN_{\cF}(\x)\cap\spann\{\e_j:x_j\in\bd(\dom(\varphi))\}=\{\bz\},\qquad\forall~\x\in\dom(F).
	\end{equation} 
	By the intersection rule for normal cones \cite[Theorem 6.42]{rockafellar2009variational} and $\x^k\in\dom(F)\cap\inter(\dom(h))$, we have
	\[ \cN_{\dom(g)}(\x^k)=\cN_{\cF}(\x^k)+\partial\delta_{\cl(\dom(h))}(\x^k)=\cN_{\cF}(\x^k)+\{\bz\}=\cN_{\cF}(\x^k).  \]
	Recall $\d^k\in\cN_{\dom(g)}(\x^k)$ and $\|\hat{\p}^k\|_2\leq L_g$ by \eqref{eq:decompose}. We have $\d^k\in\cN_{\cF}(\x^k)$ and $\d^k/\|\p^k\|_2\in \cN_{\cF}(\x^k)$. The convergence \eqref{eq:px} yields
	\[ \p^*=\lim_{k\to\infty}\frac{\hat{\p}^{k_l+1}+\d^{k_l+1}}{\|\p^{k_l+1}\|_2}=\lim_{k\to\infty}\frac{\d^{k_l+1}}{\|\p^{k_l+1}\|_2}\in\cN_{\cF}(\x^*),\]
	where the inclusion is due to the outer semi-continuity of normal cone \cite[Proposition 6.6]{rockafellar2009variational}.
	
	On the other hand, let $k=k_l$ in the optimality condition \eqref{eq:op}, divide $\|\p^{k_l+1}\|_2$ in both sides, and take $l\to\infty$. We have
	\[\p^*=\lim_{l\to\infty} \frac{\nabla h(\x^{k_l})-\nabla h(\x^{k_l+1})}{\alpha_{k_l}\|\p^{k_l+1}\|_2}=\lim_{l\to\infty}\frac{\nabla h(\x^{k_l})-\nabla h(\x^{k_l+1})}{\|\nabla h(\x^{k_l})-\nabla h(\x^{k_l+1})\|_2} . \]
	Let $\cI=\{i:x^*_i\in\inter(\dom(\varphi))\}$. The convergence $\x^{k_l}\to\x^*$ and the condition  $\|\x^{k+1}-\x^k\|_2\to0$ imply $\varphi_i(x^{k_l})-\varphi_i(x^{k_l+1})\to0$ for $i\in\cI$. It follows that 
	\[\p^*_{\cI}=\bz,\quad {\rm i.e.}, \quad\p^*\in\spann\left\{\e_j:x^*_j\in\bd(\dom(\varphi))\right\}.\]
	This, together with $\p^*\in\cN_{\cF}(\x^*)$ and $\|\p^*\|_2=1$, yields
	\[ \p^*\in\cN_{\cF}(\x^*)\cap\spann\left\{\e_j:x^*_j\in\bd(\dom(\varphi))\right\};\qquad \p^*\neq\bz. \]
	This contradicts \eqref{eq:contrad2}. We complete the proof.
	
	\subsection{Proof of Theorem \ref{th:BPPM}}\label{sec:BPPM}
	We verify (H1)---(H3) in order. Under (H1)---(H3), the convergence result directly follows from Theorem \ref{th:general}.
	
	\subsubsection{Verifying (H1) for BPPM sequence.} We begin with the subproblem \eqref{eq:bppm}, which ensures the following sufficient decrease inequality:
	\begin{equation}\label{eq:decrease1}
		\begin{aligned}
			F(\x^k)= F(\x^k)+\frac{1}{\alpha_k}D_h(\x^{k},\x^k)&\geq F(\x^{k+1})+\frac{1}{\alpha_k}D_h(\x^{k+1},\x^k)\\
			&\geq F(\x^{k+1})+\frac{1}{\bar{\alpha}}D_h(\x^{k+1},\x^k),
		\end{aligned}
	\end{equation}
	where the second inequality is due to $\alpha_k\leq \bar{\alpha}$.
	
	Note that the strong convexity of $h$, the boundedness of $\{\x^k\}_{k\geq0}$ and Assumption \ref{assum:general} (i) ensure that $h$ is $\sigma$-strongly convex on a bounded set $\cU\supseteq\{\x^k\}_{k\geq0}$ with $\sigma>0$. It follows that $D_h(\x^{k+1},\x^k)\geq\sigma \|\x^{k+1}-\x^k\|^2_2$. This, together with \eqref{eq:decrease1}, yields
	\[ F(\x^{k+1})-F(\x^k)\leq -\frac{\sigma}{\bar{\alpha}}   \left\|\x^{k+1}-\x^k\right\|^2_2.\]
	Observe that the continuity of $f$ and the boundedness of $\{\x^k\}_{k\geq0}$ imply a lower bound $v>-\infty$ such that $F(\x^{k+1})-F(\x^0)\geq v$ for all $k\geq0$.  It follows that $F(\x^{k+1})-F(\x^k)\to0$, and then $\|\x^{k+1}-\x^k\|\to0$ by the above decrease inequality. Hence, the conditions of Proposition \ref{pro:bound} are satisfied, yielding an upper bound $M>0$ such that
	\begin{equation}\label{eq:boundM}
		\left\|\nabla h(\x^{k+1})-\nabla h(\x^k)\right\|_2\leq M,\qquad\forall~k\geq0.
	\end{equation}
	Combining the bound \eqref{eq:boundM} and Proposition \ref{pro:BPM} (ii), we obtain
	\[ D_h(\x^{k+1},\x^k)\geq \frac{1}{2}\exp(-rM)\left\| \nabla^2h(\x^{k+1})^{\frac12} (\x^{k+1}-\x^{k})\right\|^2_2. \]
	This, together with \eqref{eq:decrease1}, implies (H1).
	\subsubsection{Verifying (H2) for BPPM sequence.} 
	Consider the optimality condition \eqref{eq:op} with $q=1$. Multiply $\nabla^2h(\x^{k+1})^{-\frac12}$ in both sides and rearrange. We have
	\[ 	\nabla^2h(\x^{k+1})^{-\frac12}\left(\nabla f(\x^{k+1})+\p^{k+1}\right)=-\frac1{\alpha_k}\nabla^2h(\x^{k+1})^{-\frac12}\left(\nabla h(\x^{k+1})-\nabla h(\x^k)\right) . \]
	Since $\nabla f(\x^{k+1})+\p^{k+1}\in\partial F(\x^{k+1})$, we have 
	\[\begin{aligned}
		\dist\left(\bz,\partial F(\x^{k+1})\right)&\leq  \left\|\nabla^2h(\x^{k+1})^{-\frac12}\left(\nabla f(\x^{k+1})+\p^{k+1}\right)\right\|_2 \\
		&= \frac1{\alpha_k}\left\|\nabla^2h(\x^{k+1})^{-\frac12}\left(\nabla h(\x^{k+1})-\nabla h(\x^k)\right)\right\|_2\\
		&\leq \frac{1}{\underline{\alpha}}\cdot \exp(rM) \left\|\nabla^2 h(\x^{k+1})^{\frac12}\left(\x^{k+1}-\x^k\right)\right\|_2,
	\end{aligned} \]
	where the second inequality is due to Proposition \ref{pro:BPM} (iii), the bound \eqref{eq:boundM}, and $\alpha_k\geq\underline{\alpha}$. This proves (H2).
	\subsubsection{Verifying (H3) for BPPM sequence.}\label{sec:BPPM_H3}
	We first present a weighted convergence guarantee under the stepsize choice $0<\underline{\alpha}\leq \alpha_k\leq\bar{\alpha}$.
	\begin{fact}\label{fact:conver}
		Suppose that the sequence $\{\z^k\}_{k\geq0}\subseteq\R^n$ converges to $\bar{\z}\in\R^n$, and $\{\alpha_k\}_{k\geq0}$ satisfies $0<\underline{\alpha}\leq \alpha_k\leq\bar{\alpha}$. Then, we have 
		\[\frac{\sum_{l=0}^k\alpha_k\z^k}{\sum_{l=0}^k\alpha_k}\to\bar\z.  \]
	\end{fact}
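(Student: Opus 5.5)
This is the weighted Cesàro (Toeplitz) lemma. The plan is to show that the weighted averages $\bar\z^k:=\bigl(\sum_{l=0}^k\alpha_l\z^l\bigr)/\bigl(\sum_{l=0}^k\alpha_l\bigr)$ satisfy $\|\bar\z^k-\bar\z\|_2\to0$ by the usual split into an early fixed block and a late tail. (The indices in the displayed sums are presumably meant to be $\alpha_l\z^l$ and $\alpha_l$.)

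First, set $S_k:=\sum_{l=0}^k\alpha_l$. The lower bound $\alpha_l\geq\underline{\alpha}>0$ gives $S_k\geq (k+1)\underline{\alpha}\to+\infty$. Since the weights $\alpha_l/S_k$ are nonnegative and sum to one, we can write
\[\bar\z^k-\bar\z=\frac{1}{S_k}\sum_{l=0}^k\alpha_l(\z^l-\bar\z).\]

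Next, fix $\epsilon>0$ and choose $N$ with $\|\z^l-\bar\z\|_2\leq\epsilon$ for all $l>N$. Splitting the sum at $N$ and using the triangle inequality yields
\[\|\bar\z^k-\bar\z\|_2\leq\frac{1}{S_k}\sum_{l=0}^{N}\alpha_l\|\z^l-\bar\z\|_2+\frac{\epsilon}{S_k}\sum_{l=N+1}^k\alpha_l\leq \frac{C_N}{S_k}+\epsilon,\]
where $C_N:=\bar\alpha\sum_{l=0}^N\|\z^l-\bar\z\|_2$ is a finite constant independent of $k$. Because $S_k\to\infty$, we get $\limsup_{k}\|\bar\z^k-\bar\z\|_2\leq\epsilon$, and since $\epsilon$ is arbitrary, the claim follows.

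There is no real obstacle here. The only point needing care is that divergence of $S_k$ comes from the lower step-size bound $\underline{\alpha}$, while the upper bound $\bar\alpha$ (or simply finiteness of the first $N+1$ terms) keeps the early block bounded.
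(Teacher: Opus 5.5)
Your proof is correct, and you read the index typo in the statement correctly. The paper's proof also begins by centering the weighted average at $\bar\z$ and applying the triangle inequality. It then diverges from yours: it bounds $\alpha_l\leq\bar{\alpha}$ in the numerator and $\sum_{l=0}^k\alpha_l\geq(k+1)\underline{\alpha}$ in the denominator. This reduces the claim, up to the factor $\bar{\alpha}/\underline{\alpha}$, to the unweighted Cesàro mean $\frac{1}{k+1}\sum_{l=0}^k\|\z^l-\bar\z\|_2$, and the paper simply cites a textbook for the fact that this mean tends to zero. You instead carry out the $\epsilon$--$N$ split directly on the weighted sum. The paper's route is shorter, but it outsources the only substantive step and genuinely needs both step-size bounds. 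Your route is self-contained and proves a slightly stronger statement: it only requires nonnegative weights with $S_k\to+\infty$. The upper bound $\bar{\alpha}$ enters your argument only through the constant $C_N$, where $\max_{l\leq N}\alpha_l$ would serve equally well.
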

	\begin{proof}[Proof of Fact \ref{fact:conver}]
		The triangle inequality and  $0<\underline{\alpha}\leq \alpha_k\leq\bar{\alpha}$ imply
		\[
		\begin{aligned}
			\left\|\frac{\sum_{l=0}^k\alpha_k\z^k}{\sum_{l=0}^k\alpha_k}-\z\right\|= \left\|\frac{\sum_{l=0}^k\alpha_k(\z^k-\z)}{\sum_{l=0}^k\alpha_k}\right\|\leq \frac{\sum_{l=0}^k\alpha_k\left\|\z^k-\z\right\|}{\sum_{l=0}^k\alpha_k}\leq \frac{\bar{\alpha}}{\underline{\alpha}}\cdot\frac{\sum_{l=0}^k\left\|\z^k-\z\right\|}{k+1}.\\
		\end{aligned}
		\]
		The convergence $\z^k\to\bar{\z}$ and \cite[Problem 3-1]{mattuck1999introduction} ensure the right-hand side converges to zero, yielding the desired result.\end{proof}
	Then, we are ready to verify (H3) for BPPM.
	
	Multiply the optimality condition \eqref{eq:op} with $\alpha_k$, sum the resulted equation from $0$ to $k$, and divide $\sum_{l=0}^k\alpha_l$. We obtain
	\begin{equation}\label{eq:limitop}
		\frac{\sum_{l=0}^k\alpha_l\nabla f(\x^{l+1})}{\sum_{l=0}^k\alpha_l}+\frac{\sum_{l=0}^k\alpha_l\p^{l+1}}{\sum_{l=0}^k\alpha_l}+\frac{\ \nabla h(\x^{k+1})-\nabla h(\x^0)}{\sum_{l=0}^k\alpha_l} =\bz,\quad\forall~k\geq0.
	\end{equation}
	Our strategy is to show that under the assumption that $\x^k\to\bar{\x}$, the equation \eqref{eq:limitop} converges to the stationarity condition of $f$ at $\bar{\x}$ so that $\bar{\x}$ is a stationary point of $f$. 
	
	Since $f\in\cC^1(\dom(F))$ and $\x^k\to\bar{\x}$, we have $\nabla f(\x^k)\to\nabla f(\bar{\x})$. Then, Fact \ref{fact:conver}  implies 
	\[	\frac{\sum_{l=0}^k\alpha_l\nabla f(\x^{l+1})}{\sum_{l=0}^k\alpha_l}\to \nabla f(\bar{\x}). \]
	Note that the bound \eqref{eq:boundM}, the optimality condition \eqref{eq:op}, and the boundedness of $\{\nabla f(\x^k)\}_{k\geq0}$ ensure the boundedness of $\{\p^{k+1}\}_{k\geq0}$. Recall $0<\underline{\alpha}\leq \alpha_k\leq \bar{\alpha}$. We have the boundedness of $\{{\sum_{l=0}^k\alpha_l\p^{l+1}}/{\sum_{l=0}^k\alpha_l}\}_{k\geq0}$. Then, there is a subsequence $\{k_d\}_{d\in\N}\subseteq\N$ and $\bar{\p}\in\R^n$ such that 
	\[\lim_{d\to\infty}\frac{\sum_{l=0}^{k_d}\alpha_l\p^{l+1}}{\sum_{l=0}^{k_d}\alpha_l}=\bar{\p}. \]
	We then show $\bar{\p}\in\partial g(\bar{\x})$. The above limit implies that for an arbitrary $\x\in\dom(g)$, 
	\begin{equation}\label{eq:barp}
		\begin{aligned}
			\left<\bar{\p},\bar{\x}-\x\right>&= \lim_{d\to\infty}\frac{\sum_{l=0}^{k_d}\alpha_l\left<\p^{l+1},\bar{\x}-\x\right>}{\sum_{l=0}^{k_d}\alpha_l} \\
			&=\lim_{d\to\infty}\frac{\sum_{l=0}^{k_d}\alpha_l\left<\p^{l+1},\bar{\x}-\x^{l+1}\right>}{\sum_{l=0}^{k_d}\alpha_l}+\frac{\sum_{l=0}^{k_d}\alpha_l\left<\p^{l+1},\x^{l+1}-\x\right>}{\sum_{l=0}^{k_d}\alpha_l}. \\
		\end{aligned}
	\end{equation}
	Observe that $\left<\p^{k+1},\bar{\x}-\x^{k+1}\right>\to0$ by $\x^k\to\bar{\x}$ and the boundedness of $\{\p^{k+1}\}_{k\geq0}$. Fact \ref{fact:conver} implies that the limit of first part in the right-hand of \eqref{eq:barp} is zero. Together with the convexity of $g$ and $\p^{k+1}\in\partial g(\x^{k+1})$, this implies
	\[\begin{aligned}
		\left<\bar{\p},\bar{\x}-\x\right>&=\lim_{d\to\infty} \frac{\sum_{l=0}^{k_d}\alpha_l\left<\p^{l+1},\x^{l+1}-\x\right>}{\sum_{l=0}^{k_d}\alpha_l}\\
		&\geq \lim_{d\to\infty} \frac{\sum_{l=0}^{k_d}\alpha_l\left(g(\x^{l+1})-g({\x})\right)}{\sum_{l=0}^{k_d}\alpha_l}\\
		&=g(\bar{\x})-g(\x),\qquad\quad\forall~\x\in\dom(g),
	\end{aligned}   \]
	where the last equality is due to Fact \ref{fact:conver} and $g(\x^{k+1})\to g(\bar{\x})$. It follows that 
	\begin{equation}\label{eq:pp}
		g(\x)-g(\bar{\x})\geq\left<\bar{\p},\x-\bar{\x}\right>,\quad\forall~\x\in\dom(g),  \quad{\rm i.e.},\quad \bar{\p}\in\partial g(\bar{\x}).
	\end{equation} 
	We then consider the limit of the third term in \eqref{eq:limitop}. Let $k=k_d$ in \eqref{eq:limitop} and $d\to\infty$. By the established convergences of the first two terms, we have
	\[\bar{\blam}:=\lim_{d\to\infty} \frac{ \nabla h(\x^{k_d+1})-\nabla h(\x^0)}{\sum_{l=0}^{k_d}\alpha_l}=-\nabla f(\bar{\x})-\bar{\p}. \]
	Using $\x^k\to\bar{\x}$, $\sum_{l=0}^{k_d}\alpha_l\to+\infty$, and $\nabla h(\x^0)/\sum_{l=0}^{k_d}\alpha_l\to\bz$, by Lemma \ref{le:divide}, we have \[\left<\bar\blam,\x-\bar\x\right>\leq0,\qquad\forall~\x\in \dom(h)\supseteq\dom(g).\] Combined with \eqref{eq:pp}, it yields
	\[g(\x)-g(\bar{\x})\geq\left<\bar{\p}+\bar\blam,\x-\bar{\x}\right>,\quad\forall~\x\in\dom(g),  \quad{\rm i.e.},\quad \bar{\p}+\bar\blam\in\partial g(\bar{\x}).  \]
	The above inclusion, together with $\bar\blam=-\nabla f(\bar{\x})-\bar{\p}$, yields
	\[	\bz=	\nabla f(\bar{\x})+\bar{\p}+\bar\blam\in\nabla f(\bar{\x})+\partial g(\bar{\x})= \partial F(\bar{\x}).\]
	This proves (H3). We complete the proof.

	\subsection{Proof of Theorem \ref{th:BPGM}}
	We verify (H1)---(H3) in order, under which the convergence result directly follows from Theorem \ref{th:general}.
	\subsubsection{Verifying (H1) for BPGM sequence.} The subproblem \eqref{eq:bpgm} ensures the following sufficient decrease inequality:
	\begin{equation} \label{eq:decrease2}
		g(\x^k)\geq \left<\nabla f(\x^k),\x^{k+1}-\x^k\right>+g(\x^{k+1})+\frac{1}{\alpha_k}D_h(\x^{k+1},\x^k).
	\end{equation}
	To estimate the right-hand side, we recall the convexity of $Lh-f$, which implies
	\[ Lh(\x^{k+1})-f(\x^{k+1})\geq  Lh(\x^{k})-f(\x^{k})+\left<L\nabla h(\x^{k})-\nabla f(\x^{k}),\x^{k+1}-\x^k\right> .  \]
	Rearrange the above inequality. We have
	\[ f(\x^k)\geq f(\x^{k+1})-\left<\nabla f(\x^{k}),\x^{k+1}-\x^k\right>-LD_h(\x^{k+1},\x^k).  \]
	Sum this inequality with \eqref{eq:decrease2}, and notice $F=f+g$. We obtain the sufficient decrease
	\[  F(\x^k)\geq F(\x^{k+1})+\left(\frac1{\alpha_k}-L\right)D_h(\x^{k+1},\x^k)\geq F(\x^{k+1})+\left(\frac1{\bar{\alpha}}-L\right)D_h(\x^{k+1},\x^k),  \]
	where the second inequality is due to $\alpha_k\leq \bar{\alpha}$.  Clearly, $1/{\bar{\alpha}}-L>0$ since $\bar{\alpha}<1/L$.
	
	Then, using the same arguments for \eqref{eq:boundM}, we know there is a scalar $M>0$ such that
	\begin{equation}\label{eq:boundM2}
		\left\|\nabla h(\x^{k+1})-\nabla h(\x^k)\right\|_2\leq M,\qquad\forall~k\geq0.
	\end{equation}
	This, together with Proposition \ref{pro:BPM} (ii) and the established sufficient decrease, yields (H1).
	
	\subsubsection{Verifying (H2) for BPGM sequence.}
	Rearrange the optimality condition \eqref{eq:op} with $q=0$. We have
	\[   	\nabla f(\x^{k+1})+\p^{k+1}=\nabla f(\x^{k+1})-\nabla f(\x^k)-\frac1{\alpha_k}\left(\nabla h(\x^{k+1})-\nabla h(\x^k)\right)  .\]
	Multiply $\nabla^2h(\x^{k+1})^{-\frac12}$ in both sides. We further have
	\[ \begin{aligned}
		&\nabla^2h(\x^{k+1})^{-\frac12}\left(\nabla f(\x^{k+1})+\p^{k+1}\right) \\
		=& \nabla^2h(\x^{k+1})^{-\frac12} \left(\nabla f(\x^{k+1})-\nabla f(\x^k) \right)-\frac1{\alpha_k}\nabla^2h(\x^{k+1})^{-\frac12}\left(\nabla h(\x^{k+1})-\nabla h(\x^k) \right). 
	\end{aligned} \]
	It follows from  $\nabla f(\x^{k+1})+\p^{k+1}\in\partial F(\x^{k+1})$, $\alpha_k\geq\underline{\alpha}$, and the triangle inequality that
	\[\begin{aligned}
		\dist\left(\bz,\nabla^2h(\x^{k+1})^{-\frac12} \partial F(\x^{k+1})\right) \leq & \left\|\nabla^2h(\x^{k+1})^{-\frac12}\left(\nabla f(\x^{k+1})+\p^{k+1}\right) \right\|_2 \\
		\leq& \left\|\nabla^2h(\x^{k+1})^{-\frac12} \left(\nabla f(\x^{k+1})-\nabla f(\x^k) \right) \right\|_2\\
		&+\frac1{\underline{\alpha}} \left\| \nabla^2h(\x^{k+1})^{-\frac12}\left(\nabla h(\x^{k+1})-\nabla h(\x^k) \right)\right\|_2\\
		\leq& \left(5L +\frac{1}{\underline{\alpha}}\right)\exp(rM)\left\| \nabla^2h(\x^{k+1})^{\frac12}(\x^{k+1}-\x^k)\right\|_2,
	\end{aligned}  
	\]
	where the last inequality is due to the bound \eqref{eq:boundM2}, Proposition \ref{pro:BPM} (iii), and  Proposition \ref{pro:BPMf} (with Remark \ref{re:f1}). This proves (H2).
	\subsubsection{Verifying (H3) for BPGM sequence.} The arguments are nearly the same as the verification of (H3) for the BPPM sequence in Sec. \ref{sec:BPPM_H3} except that $\nabla f(\x^{k+1})$ is replaced by $\nabla f(\x^{k})$, which does not affect the proof. We omit the details for brevity.  
	
	\section{Trajectory Convergence of Mirror Flow}\label{sec:flow}
	With iterate convergence well established for discrete-time BPMs, we now consider trajectory convergence of the continuous-time BPM, i.e., the mirror flow, which is an absolutely continuous curve\footnote{A curve $\x:\R_+\to\R^n$ is absolutely continuous if there exits a map $\y:\R_+\to\R^n$ that is integrable on any compact interval and satisfies $\x(t)=\x(0)+\int_0^t\y(s){\rm d}s$ for all $t\geq0$. In this case, $\y(t)=\dot{\x}(t)$ a.e. on $\R_+$. See \cite[Section 2.1]{davis2020stochastic}.}  (arc)  $\x:\R_+\to\R^n$ that satisfies the following dynamics (see, e.g., \cite[Section 3.1]{ding2025exploration} and \cite{krichene2015accelerated,bolte2003barrier}):
	\begin{equation}\label{eq:mirrorflow}
		\begin{aligned}
			&\bz\in\frac{{\rm d} }{{\rm d}t} \nabla h\left(\x(t)\right)+\partial F\left(\x(t)\right)\quad~&{\rm a.e.~ on }~\R_+,\\  
			&\x(t)\in\inter(\dom(h))\cap\dom(F),\quad&\forall~t\in\R_+.
		\end{aligned}\tag{$\cE$}
	\end{equation} 
	Instead of using the extended SK\L{} property to analyze its convergence, we adopt a more intuitive approach based on transformations induced by the parameterization functions. 
	\begin{proposition}{\rm (cf. \cite[Theorem 5.1]{li2022implicit})}\label{pro:Euclidean}
		Suppose that Assumption \ref{assum:general} holds with $\psi,\phi$ given in Definition \ref{def:psi}. Let $\x:\R_+\to\R^n$ be an arc satisfying \eqref{eq:mirrorflow}.  Define $\Phi:\z\mapsto F(\phi(\z))$. Then, the parameterized curve $\y:t\mapsto\psi(\x(t))$ is an arc that satisfies the Euclidean subgradient dynamics
		\begin{equation}\label{eq:Euclidean_flow}
			\begin{aligned}
				&\bz\in\dot{\y}(t)+ \partial \Phi(\y(t))\quad~&{\rm a.e.~ on }~\R_+;\\
				&y_i(t)\in\inter(\dom(\phi)),&\quad\forall~i\in[n],~t\in\R_+.
			\end{aligned}\tag{$\cG$}
		\end{equation}
	\end{proposition}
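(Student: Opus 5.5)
The argument is a change of variables combined with a subdifferential chain rule along the interior of the domain. Write $\psi(\x)=(\psi(x_1),\ldots,\psi(x_n))$ componentwise, as in the paper's notation. Since $\x(t)\in\inter(\dom(h))$ for all $t$, each $x_i(t)$ lies in $\inter(\dom(\varphi))$. There $\psi$ is $\cC^1$ with $\psi'(x)=\sqrt{\varphi''(x)}>0$ by Definition \ref{def:kernel}(ii), and $\phi=\psi^{-1}$ is $\cC^1$ on $\inter(\dom(\phi))=\psi(\inter(\dom(\varphi)))$ with $\phi'(y)=1/\sqrt{\varphi''(\phi(y))}$. Hence $y_i(t)=\psi(x_i(t))\in\inter(\dom(\phi))$, which is the second line of \eqref{eq:Euclidean_flow}.

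\textbf{Step 1: $\y$ is an arc.} Fix a compact interval $[0,T]$. The image $\{x_i(t):t\in[0,T]\}$ is compact in $\inter(\dom(\varphi))$, so $\psi$ is Lipschitz there and $\y=\psi\circ\x$ is absolutely continuous on $[0,T]$. The chain rule gives $\dot y_i(t)=\sqrt{\varphi''(x_i(t))}\,\dot x_i(t)$ a.e. In the same way, $\nabla h(\x(t))=\varphi'(\x(t))$ is absolutely continuous, since $\varphi'$ is locally Lipschitz on the interior. Its derivative is
\[\frac{{\rm d}}{{\rm d}t}\nabla h(\x(t))=\nabla^2h(\x(t))\dot\x(t)=\nabla^2h(\x(t))^{\frac12}\dot\y(t)\quad\text{a.e.}\]
We therefore have $\dot\y=\nabla^2h(\x)^{-1/2}\frac{{\rm d}}{{\rm d}t}\nabla h(\x)$ a.e. Combining this with \eqref{eq:mirrorflow} shows that for a.e. $t$ there is $\v(t)\in\partial F(\x(t))$ with
\[\dot\y(t)=-\nabla^2h(\x(t))^{-\frac12}\v(t)=-\Diag\big(\phi'(\y(t))\big)\v(t).\]

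\textbf{Step 2: chain rule.} We now show $\Diag(\phi'(\y))\partial F(\phi(\y))\subseteq\partial\Phi(\y)$ at every $\y$ with all $y_i\in\inter(\dom(\phi))$. On a neighborhood of such a $\y$, the map $\phi$ is a $\cC^1$ diffeomorphism onto a neighborhood of $\x=\phi(\y)$ in $\inter(\dom(h))$, with invertible Jacobian $\Diag(\phi'(\y))$. The basic chain rule \cite[Exercise 10.7]{rockafellar2009variational} then gives equality,
\[\partial\Phi(\y)=\nabla\phi(\y)^\top\partial F(\phi(\y)),\]
which is the same argument the proof of Proposition \ref{pro:skl_general} used for $G$. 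The Jacobian is diagonal, so $\nabla\phi(\y)^\top=\Diag(\phi'(\y))=\nabla^2h(\x)^{-1/2}$. Hence $-\dot\y(t)\in\partial\Phi(\y(t))$ a.e., which is \eqref{eq:Euclidean_flow}.

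The main obstacle is regularity: one must check that the composition of $\varphi'$ or $\psi$ with an absolutely continuous curve stays absolutely continuous, and that the a.e. chain rule applies. Both follow from local Lipschitzness on compact subsets of the interior. This is exactly why the constraint $\x(t)\in\inter(\dom(h))$ in \eqref{eq:mirrorflow} is essential. Boundary behavior of $\phi$, as in Proposition \ref{pro:parameterization}, plays no role here; it only matters for later boundedness arguments.
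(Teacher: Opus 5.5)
Your proposal is correct and follows essentially the same route as the paper. It uses the a.e. identity $\dot{\y}=\nabla^2h(\x)^{\frac12}\dot{\x}$, the identity $\Diag(\phi'(\y))=\nabla^2h(\x)^{-\frac12}$, and the full-rank chain rule \cite[Exercise 10.7]{rockafellar2009variational} to identify $\nabla^2h(\x)^{-\frac12}\partial F(\x)$ with $\partial\Phi(\y)$. Your explicit justification of absolute continuity, via Lipschitzness of $\psi$ and $\varphi'$ on the compact image of $[0,T]$, is a slightly more careful version of the paper's one-line remark.
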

	Given the above transformation, a natural approach to establishing the convergence of mirror flow trajectory $\{\x(t)\}_{t\geq0}$ is proving the convergence of parameterized trajectory $\{\y(t)\}_{t\geq0}$ to some point $\bar{\y}$ and then using the continuity of $\phi$ to yield $\lim_{t\to\infty}\x(t)=\lim_{t\to\infty}\phi(\y(t))=\phi(\bar{\y})$. Then, to establish convergence of  $\{\y(t)\}_{t\geq0}$ through the K\L{} framework \cite{absil2005convergence}, two core questions arise:
	\begin{itemize}
		\item When will the parameterized objective function $\Phi$ satisfy the K\L{} inequality?
		\item Is the parameterized trajectory $\{\y(t)\}_{t\geq0}$ bounded when the original trajectory is?
	\end{itemize}
	To address these issues,  we develop the following properties for the parameterization functions.
	\begin{proposition}\label{pro:parameterization2}
		Suppose that Assumption \ref{assum:general} (i) holds with $\dom(h)$ closed. Let $\psi,\phi$ be parameterization functions of $h$ as defined in Definition \ref{def:psi}.  The  following hold:
		\begin{enumerate}[label={{\rm (\roman*)}}]
			\item $\phi$ and $\psi$ are definable in $\cS(\R_{{\rm an,exp}})$.
			\item $\psi([a,b])$ is bounded for any bounded interval $[a,b]\subseteq\dom(\varphi)$. 
		\end{enumerate}
	\end{proposition}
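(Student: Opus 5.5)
We will prove (ii) first, since it follows directly from the earlier results. Because $\dom(h)$ is closed, $\dom(\varphi)$ is closed. Corollary \ref{co:closed} then gives $|\int_y^x\sqrt{\varphi''(s)}\,{\rm d}s|<+\infty$ for all $x,y\in\dom(\varphi)$. By Proposition \ref{pro:parameterization} (i), $\psi$ is finite, continuous and strictly increasing on $\dom(\varphi)$. Hence for $[a,b]\subseteq\dom(\varphi)$ we have $\psi([a,b])=[\psi(a),\psi(b)]$, which is a bounded interval. This settles (ii).

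For (i), the plan is to prove definability of $\psi$ and then get $\phi$ for free. Once $\psi$ is definable in $\cS(\R_{\rm an,exp})$, it is injective by Proposition \ref{pro:parameterization} (i), so $\phi=\psi^{-1}$ is definable by Proposition \ref{pro:o1} (ii). To handle $\psi$, write the integrand through $\hat\varphi$ from Assumption \ref{assum:general} (i): on $\inter(\dom(\varphi))$ we have $\sqrt{\varphi''(s)}=\hat\varphi(s)^{-1/2}$. The map $s\mapsto\hat\varphi(s)^{-1/2}$ is globally subanalytic on $\inter(\dom(\varphi))$. This holds because $\hat\varphi$ is globally subanalytic and positive there, and $t\mapsto t^{-1/2}$ on a bounded-away piece is semialgebraic; composition stays within $\cS(\R_{\rm an})$ by Proposition \ref{pro:o1} (v). Since the integrand is positive, the sign issue in $\int_{x_0}^x$ can be handled by setting
\[
H(s,x)=\big(\mathbbm{1}_{\{x_0\le s\le x\}}-\mathbbm{1}_{\{x\le s\le x_0\}}\big)\,\hat\varphi(s)^{-1/2}
\]
for $s\in\inter(\dom(\varphi))$, and $H=0$ otherwise. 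This $H$ is globally subanalytic on $\R\times\R$, since it is built from semialgebraic indicator sets and a definable function by piecewise definition, which preserves definability. Lemma \ref{le:integ} then applies with $\cU=\inter(\dom(\varphi))$ open. Well-definedness of $\hat H(x)=\int_\R H(s,x)\,{\rm d}s$ follows from the finiteness in Corollary \ref{co:closed}, so $\psi-c=\hat H$ is definable in $\cS(\R_{\rm an,exp})$ on $\inter(\dom(\varphi))$.

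The remaining step is to extend to the closure. Since $\dom(\varphi)$ is an interval, $\bd(\dom(\varphi))$ consists of at most two points. The graph of $\psi$ on $\dom(\varphi)$ is therefore the union of the definable graph over the interior with at most two points $(\bar x,\psi(\bar x))$. Finite sets are definable, and definable sets are closed under finite unions, so $\psi$ is definable on $\cl(\dom(\varphi))$. Adding the constant $c$ preserves definability.

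The main obstacle is a technical point in applying Lemma \ref{le:integ}. One must check that the truncated integrand $H$ is genuinely globally subanalytic on all of $\R\times\R$, including behaviour at infinity when $\dom(\varphi)$ is unbounded. Global subanalyticity is a condition at infinity, and $\hat\varphi$ is only assumed globally subanalytic on $\cl(\dom(\varphi))$. I would first verify that extending $\hat\varphi$ by zero outside $\dom(\varphi)$ keeps its graph globally subanalytic. The graph of the extension is the graph of $\hat\varphi$ together with $(\R\setminus\dom(\varphi))\times\{0\}$, a semialgebraic set. The reciprocal square root would be taken only where $\hat\varphi>0$, which is a definable set by o-minimality.
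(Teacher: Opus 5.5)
Your proposal is correct and follows essentially the same route as the paper. Part (ii) comes from finiteness and monotonicity of $\psi$ on the closed domain. Part (i) writes $\sqrt{\varphi''}=\hat\varphi^{-1/2}$, truncates the integrand with signed indicators, applies Lemma \ref{le:integ} on $\inter(\dom(\varphi))$, and inverts via Proposition \ref{pro:o1} (ii). The only cosmetic difference is the boundary extension: the paper takes the closure of the graph over $\inter(\dom(\varphi))$ and uses Proposition \ref{pro:o1} (i), while you adjoin the at most two boundary points, which is equally valid.
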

	Proposition \ref{pro:parameterization2} (i), along with the composition rule of definable functions and the fact that definable functions satisfies K\L{} inequality \cite{bolte2007clarke}, ensures the K\L{} inequality of the parameterized objective function $\Phi$ when $F$ is definable. 
	Proposition \ref{pro:parameterization2} (ii) ensures the boundedness of the parameterized trajectory $\{\psi(\x(t))\}_{t\geq0}$ whenever the original trajectory $\{\x(t)\}_{t\geq0}$ is bounded.  The aforementioned questions are thereby addressed. We remark that the closedness of $\dom(h)$ is necessary for item (ii): When $\dom(h)$ is open, $\{\y(t)\}_{t\geq0}$ may diverge even if the original mirror flow trajectory $\{\x(t)\}_{t\geq0}$ converges, making our methodology inapplicable. To see this, it suffices to consider the setting of Example \ref{example:skl_fails}.
	\begin{example}[Unbounded Parameterized Trajectory]\label{example:unbound}
		Consider the Burg entropy kernel $h(x)=-\log(x)$ and the function $F:x\mapsto x+\delta_{\R_+}(x)$, where the inverse parameterization function $\psi$ is $x\mapsto\log(x)$ by Example \ref{example:psi}. Let the mirror flow trajectory $\{x(t)\}_{t\geq0}$ satisfy 
		\[ \frac{{\rm d}}{{\rm d}t}\left(-\frac1{x(t)} \right)+1=0;\qquad x(0)=1\qquad x(t)>0.\]
		It is easy to derive that $x(t)=\frac{1}{1+t}$ for $t\geq0$, which converges to $x^*=0$. 
		However, the parameterized trajectory $\{y(t)\}_{t\geq0}$ is given by $y(t)=\psi(x(t))=-\log(1+t)$, which diverges as $t\to+\infty$.
	\end{example}

	With these preparations, we are ready to establish trajectory convergence of mirror flow.
	\begin{theorem}\label{th:main}
		Suppose that Assumption \ref{assum:general} holds with $\dom(h)$ closed, and $F$ is definable in $\cS(\R_{{\rm an,exp}})$. Let $\x:\R_+\to\R^n$ be a bounded arc satisfying \eqref{eq:mirrorflow}. 
		Then, the trajectory $\{\x(t)\}_{t\geq0}$ converges to a stationary point of $F$ with a finite length.
	\end{theorem}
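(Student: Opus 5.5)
We reduce Theorem \ref{th:main} to the K\L{} theory of Euclidean subgradient curves via the change of variables of Proposition \ref{pro:Euclidean}. Let $\y(t)=\psi(\x(t))$, where $\psi$ acts componentwise. By Proposition \ref{pro:Euclidean}, $\y$ is an arc satisfying \eqref{eq:Euclidean_flow} with $\Phi=F\circ\phi$. By Proposition \ref{pro:parameterization2} (i) and Proposition \ref{pro:o1} (iv)--(v), the map $\z\mapsto\phi(\z)$ is definable in $\cS(\R_{{\rm an,exp}})$ when applied componentwise. Since $F$ is definable, $\Phi$ is definable as well, and $\Phi$ is continuous on its (closed) domain because $F$ and $\phi$ are continuous (Proposition \ref{pro:parameterization} (i)). Since $\{\x(t)\}$ is bounded, it lies in a product of bounded intervals of $\dom(\varphi)$, which is closed. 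Proposition \ref{pro:parameterization2} (ii) then shows that $\{\y(t)\}$ is bounded.

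Next we apply the standard definable-K\L{} argument to $\y$.
\begin{enumerate}[label={{\rm (\alph*)}}]
\item \textbf{Energy decrease.} Along the arc, the chain rule for definable (hence path-differentiable, Whitney-stratifiable) continuous functions gives $\frac{{\rm d}}{{\rm d}t}\Phi(\y(t))=-\|\dot\y(t)\|_2^2$ a.e. We would invoke the chain rule of \cite{bolte2007clarke} or \cite{davis2020stochastic}, possibly after restricting to the domain. So $\Phi(\y(t))$ is nonincreasing, bounded below by boundedness and continuity, and $\int_0^\infty\|\dot\y\|^2_2\,{\rm d}t<\infty$.
\item \textbf{Accumulation point and limit value.} Let $\bar\y$ be an accumulation point. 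Then $\Phi(\y(t))\downarrow\Phi(\bar\y)$.
\item \textbf{Desingularization.} Apply the K\L{} inequality for definable functions \cite{bolte2007clarke} at $\bar\y$, with desingularizer $\zeta$. While $\y(t)$ stays in the K\L{} neighborhood, one gets $-\frac{{\rm d}}{{\rm d}t}\zeta(\Phi(\y(t))-\Phi(\bar\y))\ge\|\dot\y(t)\|_2$, using $\|\dot\y\|_2=\dist(\bz,\partial\Phi(\y))$. This holds in the minimal-norm sense, which is where the definable chain rule is needed. The usual trapping argument then yields $\int_0^\infty\|\dot\y\|_2\,{\rm d}t<\infty$, and hence $\y(t)\to\bar\y$.
\end{enumerate}
If $\Phi(\y(t_0))=\Phi(\bar\y)$ at some finite $t_0$, then $\dot\y=\bz$ afterwards and the conclusion is trivial.

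Now transfer back to $\x$. Since $\phi$ is continuous, $\x(t)=\phi(\y(t))\to\bar\x:=\phi(\bar\y)$. Finite length of $\x$ follows because $\phi$ is locally Lipschitz on bounded sets. Indeed, $\phi'=1/\sqrt{\varphi''(\phi)}$ is bounded because $\varphi''$ is bounded below by strong convexity. Hence $\int\|\dot\x\|_2\le C\int\|\dot\y\|_2<\infty$.

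The main obstacle is stationarity of $\bar\x$ for $F$. Stationarity of $\bar\y$ for $\Phi$ does not transfer directly, because $\phi'$ vanishes at boundary points; this is exactly the spurious-stationarity issue. We would instead mimic the (H3) argument of Sec. \ref{sec:BPPM_H3} in continuous time. Integrate \eqref{eq:mirrorflow} on $[0,T]$ and divide by $T$:
\[
\frac{\nabla h(\x(T))-\nabla h(\x(0))}{T}+\frac1T\int_0^T\left(\nabla f(\x(t))+\p(t)\right){\rm d}t=\bz,\qquad \p(t)\in\partial g(\x(t)).
\]
The Ces\`aro average of $\nabla f(\x(t))$ tends to $\nabla f(\bar\x)$.

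For the $\p$-term we need $\p(t)$ bounded. This can be obtained from Assumption \ref{assum:general2}-type reasoning, or directly: $\dot\y$ is bounded a.e. near the limit. Alternatively, we can avoid boundedness by using the subgradient inequality $\langle\p(t),\x-\x(t)\rangle\le g(\x)-g(\x(t))$. This gives, for any $\x\in\dom(g)$,
\[
\Big\langle \tfrac{\nabla h(\x(T))-\nabla h(\x(0))}{T}+\tfrac1T\textstyle\int_0^T\nabla f(\x(t)){\rm d}t,\ \x-\x(T)\Big\rangle\ \ge\ g(\x(T))-g(\x)+o(1),
\]
up to controlling the discrepancy between $\x(t)$ and $\x(T)$ via finite length. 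Passing to a subsequence on which $\nabla h(\x(T))/T\to\bar\blam$, Lemma \ref{le:divide} gives $\bar\blam\in\cN_{\dom(h)}(\bar\x)$, so the $\bar\blam$ term has the right sign. In the limit we obtain $g(\x)-g(\bar\x)\ge\langle-\nabla f(\bar\x),\x-\bar\x\rangle$, i.e.\ $\bz\in\partial F(\bar\x)$.

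If $\nabla h(\x(T))/T$ is unbounded, we normalize by its norm instead of by $T$. The resulting limit direction is a nonzero vector in $\cN_{\dom(h)}(\bar\x)$ that is also a normal to $\dom(g)$ at $\bar\x$. This should be excluded using the fact that $\inter(\dom(h))\cap\dom(g)\neq\emptyset$ together with convexity. We expect controlling this averaging step rigorously, rather than the K\L{} part, to be the delicate point.
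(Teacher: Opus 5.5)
Your reduction to \eqref{eq:Euclidean_flow}, the boundedness of $\y$ via Proposition \ref{pro:parameterization2}, the definable K\L{} argument, and the transfer of convergence and finite length back to $\x$ follow the paper's proof and are fine. The gap is in the stationarity step (the paper's Lemma \ref{le:limit}), exactly where you expected difficulty. None of your three ways of handling the $\p$-term works as stated.

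First, Assumption \ref{assum:general2} is not a hypothesis of Theorem \ref{th:main}. Second, a bound on $\dot{\y}$ would not bound $\p(t)$. Indeed, $\p(t)+\nabla f(\x(t))=-\nabla^2h(\x(t))^{\frac12}\dot{\y}(t)$ a.e., and $\nabla^2h(\x(t))^{\frac12}$ blows up in the coordinates $j$ with $\bar{x}_j\in\bd(\dom(\varphi))$. Nothing in the hypotheses bounds the normal-cone part of $\p(t)$ pointwise (compare the discrete-time Example \ref{example:fail}); this is why the paper only controls Ces\`aro averages of $\p$. Third, finite length cannot control $\frac1T\int_0^T\langle\p(t),\x(T)-\x(t)\rangle\,{\rm d}t$, because smallness of $\|\x(T)-\x(t)\|_2$ is useless against a possibly unbounded $\p(t)$. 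So the inequality you display, and with it both of your cases, is not justified by the argument you give.

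The missing idea is to use the dynamics instead of any bound on $\p$. Substitute $\p(t)+\nabla f(\x(t))=-\frac{{\rm d}}{{\rm d}t}\nabla h(\x(t))$ and integrate by parts. Together with convexity of $h$ this gives
\[
\int_0^T\left<\p(t)+\nabla f(\x(t)),\bar{\x}-\x(t)\right>{\rm d}t\ \geq\ -D_h(\bar{\x},\x(0)).
\]
The right-hand side is a finite constant precisely because $\dom(h)$ is closed, so $h(\bar{\x})<+\infty$. With $\x(T)$ in place of $\bar{\x}$, the same computation gives exactly $-D_h(\x(T),\x(0))$, which stays bounded because $\varphi$ is finite and continuous on its closed domain. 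Now add the subgradient inequality $\langle\p(t),\x(t)-\z\rangle\geq g(\x(t))-g(\z)$ and use $\x(t)\to\bar{\x}$. This yields
\[
\left<\tfrac1T\textstyle\int_0^T\p(t)\,{\rm d}t,\ \bar{\x}-\z\right>\geq g(\bar{\x})-g(\z)-o(1)\qquad\text{for every }\z\in\dom(g),
\]
and both of your cases follow from this.

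Your description of the unbounded case also needs correcting. A nonzero vector lying in both $\cN_{\dom(h)}(\bar{\x})$ and $\cN_{\dom(g)}(\bar{\x})$ is no contradiction, since $\dom(g)\subseteq\dom(h)$ gives $\cN_{\dom(h)}(\bar{\x})\subseteq\cN_{\dom(g)}(\bar{\x})$. What you actually obtain is a unit vector $\p^*$, the normalized limit along a subsequence of the averages of $\p$, with these properties:
\begin{itemize}
\item $\langle\p^*,\bar{\x}-\z\rangle\geq0$ for all $\z\in\dom(g)$;
\item $-\p^*\in\cN_{\dom(h)}(\bar{\x})$;
\item $p^*_i=0$ whenever $\bar{x}_i\in\inter(\dom(\varphi))$, and the remaining coordinates carry the signs from the proof of Lemma \ref{le:divide}.
\end{itemize}
Choosing $\z=\x(0)\in\inter(\dom(h))\cap\dom(g)$ then forces $\langle\p^*,\bar{\x}-\x(0)\rangle<0$, contradicting the first property. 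The strict inequality comes from this sign structure and from $\x(0)$ being interior, not from convexity alone.
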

	Theorem \ref{th:main} establishes, to the best of our knowledge, the first general trajectory convergence result for mirror flow that does not rely on either the convexity of the objective function or isolation property of the stationary points. This represents a significant improvement over existing results on mirror trajectory \cite{ding2025exploration,Attouch2004RegularizedLD,Alvarez2018HessianRG,Bolte2003BarrierOA}, demonstrating the power of our parameterization-based convergence analysis. Consequently, we complete the continuous-time convergence analysis for BPMs.

	\subsection{Proof of Proposition \ref{pro:Euclidean}}
	Note that $\psi$ is locally Lipschitz continuous on $\inter(\dom(\varphi))$ since $\psi^{\prime}(z)=\sqrt{\varphi^{\prime\prime}(z)}$ is continuous for $z\in\inter(\dom(\varphi))$.
	For the arc (i.e., absolutely continuous curve) $\x$ on $\inter(\dom(h))$, it is clear that  $\y=\psi(\x)$ is also an arc. We then prove it satisfies \eqref{eq:Euclidean_flow}.
	
	Due to the a.e. differentiability of $\x$ and the fact that $\x(t)\in\inter(\dom(h))$ for $t\in\R_+$, we have $\frac{{\rm d}}{{\rm d} t} \nabla h(\x(t))=\nabla^2h(\x(t))\dot{\x}(t)$ a.e. on $\R_+$. Hence, we may rewrite the dynamics \eqref{eq:mirrorflow} as 
	\[\bz\in \nabla^2 h(\x(t))\dot{\x}(t)+\partial F(\x(t))\quad~{\rm a.e.~ on }~\R_+.\] 
	Note that the strong convexity of $\varphi$ ensures $\nabla^2h(\x(t))=\Diag(\varphi^{\prime\prime}(x_1(t)),\ldots,\varphi^{\prime\prime}(x_n(t)))\succ\bz$.
	The above inclusion can be further formulated as
	\begin{equation}\label{eq:middle_flow}
		\bz\in \nabla^2 h(\x(t))^{\frac12}\dot{\x}(t)+\nabla^2 h(\x(t))^{-\frac12}\partial F(\x(t))\quad~{\rm a.e.~ on }~\R_+.
	\end{equation}
	Next, we show that \eqref{eq:middle_flow} is equivalent to subdifferential inclusion in \eqref{eq:Euclidean_flow}. To find the derivative of the curve $\y:t\mapsto\psi(\x(t))$, observe that a.e. differentiability of $\x$ yield 
	\begin{equation*}
		\dot{\y}(t)=\frac{{\rm d} }{{\rm d}t} \psi(\x(t))=\Diag\left(\psi^{\prime}(\x(t))\right)\dot{\x}(t)\quad~{\rm a.e.~ on }~\R_+. 
	\end{equation*}
	Using \eqref{eq:psi}, we see that $\psi^{\prime}(x_i(t))=\sqrt{\varphi^{\prime\prime}(x_i(t))}$ for $i\in[n]$ and hence
	\begin{equation}\label{eq:doty}
		\dot{\y}(t)=\Diag\left(\sqrt{\varphi^{\prime\prime}(\x(t))}\right)\dot{\x}(t)= \nabla^2 h(\x(t))^{\frac12}\dot{\x}(t)\quad~{\rm a.e.~ on }~\R_+. 
	\end{equation}
	To characterize $\partial \Phi=\partial (F\circ \phi)$, observe that $\phi=\psi^{-1}$ and  $\psi^{\prime}=\sqrt{\varphi^{\prime\prime}}>0$ yield 
	\begin{equation}\label{eq:fullrank}
		\begin{aligned}
			\Diag\left(\phi^{\prime}(\y(t))\right)&=\Diag\left(\frac1{\psi^{\prime}(x_1(t))},\ldots,\frac1{\psi^{\prime}(x_n(t))} \right)\\
			&=\Diag\left(\frac1{\sqrt{\varphi^{\prime\prime}(x_1(t))}},\ldots,\frac1{\sqrt{\varphi^{\prime\prime}(x_n(t))}} \right)\\
			&=  \nabla^2 h(\x(t))^{-\frac12}\succ\bz.
		\end{aligned}
	\end{equation}
	This, together with \cite[Exercise 10.7]{rockafellar2009variational} (chain rule) and $\x(t)=\phi(\y(t))$, yields
	\begin{equation*}\label{eq:partial_g}
		\partial \Phi(\y(t))=\Diag\left(\phi^{\prime}\left(\y(t)\right)\right)\partial F\left(\phi\left(\y(t)\right)\right)= \nabla^2 h(\x(t))^{-\frac12}\partial F(\x(t)).
	\end{equation*}
	Combining the above subdifferential expression with \eqref{eq:middle_flow} and \eqref{eq:doty}, we obtain
	\[	\bz\in\dot{\y}(t)+ \partial \Phi(\y(t))\quad~{\rm a.e.~ on }~\R_+. \]
	Note that the monotonic increasing of $\psi$ and $x_i(t)\in\inter(\dom(\varphi))$ ensure $y_i(t)=\psi(x_i(t))\in\inter(\range(\psi))=\inter(\dom(\phi))$ for $i\in[n]$.
	We conclude that the curve $\y$ satisfies \eqref{eq:Euclidean_flow}.
	
	\subsection{Proof of Proposition \ref{pro:parameterization2}}
	Thanks to Proposition \ref{pro:parameterization}. Item (ii) directly follows from the  monotonic increasing and well-definedness of $\psi$ on $\dom(\varphi)$ and the closedness of $\dom(\varphi)$. Therefore, we focus on the proof of item (i).
	
	As $\hat{\varphi}:\dom(\varphi)\to\R_+$ is globally subanalytic (i.e., definable in $\cS(\R_{{\rm an}})$), we know that $\hat{\varphi}|_{\inter(\dom(\varphi))}:x\mapsto1/\varphi^{\prime\prime}(x)$ is also globally subanalytic by Proposition \ref{pro:o1} (iii).
	This, together with global subanalyticity of $u:x\mapsto1/\sqrt{x}$ on $(0,+\infty)$ and the composition rule Proposition \ref{pro:o1} (v), implies that $\sqrt{\varphi^{\prime\prime}}:\inter(\dom(\varphi))\to\R$, i.e., $u\circ\hat{\varphi}$, is globally subanalytic.
	
	Now, we are prepared to show that $\psi:x\mapsto \int_{x_0}^x\sqrt{\varphi^{\prime\prime}(s)}{\rm d}s$ is definable in $\cS(\R_{{\rm an,exp}})$. It suffices to consider an arbitrary $x_0\in\inter(\dom(\varphi))$ since different $x_0$ only cause a constant difference for $\psi$. Let us define the function $v:\R\times\R\to\R$ by
	\[v(s,x)=\begin{cases}
		\begin{aligned}
			\sqrt{\varphi^{\prime\prime}(s)},\qquad &\text{ if }(s,x)\in \cD_1=\{(s,x):x_0\leq s\leq x, ~x\in\inter(\dom(\varphi))\};\\
			-\sqrt{\varphi^{\prime\prime}(s)},\qquad &\text{ if }(s,x)\in \cD_2=\{(s,x):x\leq s< x_0,~x\in\inter(\dom(\varphi))\};\\
			0,\qquad&\text{ if }(s,x)\in\cD_3=\R^2\setminus(\cD_1\cup\cD_2).
		\end{aligned}
	\end{cases}\]
	Direct computation shows that its parameterized integral $\hat{v}:\inter(\dom(\varphi))\to\R$ defined by $\hat{v}(x)=\int_{\R}v(s,x){\rm d}s$ satisfies
	\[\hat{v}(x)= 
	\int_{x_0}^x\sqrt{\varphi^{\prime\prime}(s)}{\rm d}s=\psi(x),\qquad \forall~x\in\inter(\dom(\varphi)).
	\]
	As $\graph(v)$ is the union of $\graph(v|_{\cD_1})$, $\graph(v|_{\cD_2})$, and $\{(s,x,0):(s,x)\in\cD_3\}$, which are globally subanalytic, we know that $\graph(v)$ and further $v$ are globally subanalytic. Then, it follows from Lemma \ref{le:integ} that the parameterized integral $\hat{v}$ (and its graph) is definable in $\cS(\R_{{\rm an,exp}})$. 
	
	Finally, observe that $\graph(\psi)=\cl(\graph(\hat{v}))$ due to (i) $\hat{v}(x)=\psi(x)$ for $x\in\dom(\hat{v})=\inter(\dom(\varphi))$; (ii) $\dom(\psi)=\dom(\varphi)=\cl(\dom(\hat{v}))$; and (iii) the continuity of $\psi$.  By Proposition \ref{pro:o1} (i), $\graph(\psi)$ and further $\psi$ are definable in $\cS(\R_{{\rm an,exp}})$. Also, the inverse function $\phi$ is definable in $\cS(\R_{{\rm an,exp}})$ by Proposition \ref{pro:o1} (ii). We complete the proof.

	\subsection{Proof of Theorem \ref{th:main}}
	We first develop a decrease property for the curve $\gamma:t\mapsto g(\y(t))$ to facilitate the classical K\L{} inequality-based analysis. 
	\begin{lemma}\label{le:path}
		Consider the setting of  Theorem \ref{th:main}.  Let $\psi,\phi$ be parameterization functions of $h$ as defined in Definition \ref{def:psi}, and efine the curve $\gamma:\R_+\to\R^n$ by $\gamma(t)=\Phi(\y(t))$. Then, $\gamma$ is absolutely continuous and satisfies
		\begin{equation}\label{eq:suff_descent}
			\gamma^{\prime}(t)=-\left\|\dot{\y}(t) \right\|^2_2 \quad~{\rm a.e.~ on }~\R_+.
		\end{equation}
	\end{lemma}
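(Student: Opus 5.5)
We will prove Lemma \ref{le:path} with a nonsmooth chain rule along arcs for the function $\Phi=F\circ\phi$, combined with the differential inclusion \eqref{eq:Euclidean_flow} from Proposition \ref{pro:Euclidean}. (The ``$g$'' in the lead-in sentence should be read as $\Phi$, and $\gamma$ is scalar-valued.) Throughout, $\y(t)=\psi(\x(t))$ is the arc produced by Proposition \ref{pro:Euclidean}.

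\textbf{Step 1: $\Phi$ admits a chain rule along arcs.} Since $\dom(h)$ is closed, Proposition \ref{pro:parameterization2} (i) shows that $\phi$ is definable in $\cS(\R_{{\rm an,exp}})$. The function $F$ is definable in the same structure. By the composition rule in Proposition \ref{pro:o1} (v), $\Phi$ is definable; it is also continuous on its domain, because $F$ and $\phi$ are continuous. For definable functions that are locally Lipschitz on their domain, the Clarke/limiting subdifferential chain rule holds along arcs, see \cite{davis2020stochastic}: for every arc $\z$ in the domain, $t\mapsto\Phi(\z(t))$ is absolutely continuous and
\[
\frac{{\rm d}}{{\rm d}t}\Phi(\z(t))=\left<\v,\dot{\z}(t)\right>\quad\text{for all }\v\in\partial\Phi(\z(t)),\ \text{a.e. }t.
\]
The difficulty is that $\Phi$ is not globally Lipschitz: $g$ may be an indicator-type term and $\phi$ is only locally Lipschitz. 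Two facts rescue this. First, $\y(t)$ stays in $\inter(\dom(\phi))$, and on any compact time interval it remains in a compact subset of that interior, so $\phi$ is $\cC^1$ there with $\phi^{\prime}>0$. Second, $F=f+g$ is locally Lipschitz on $\dom(g)$ by Assumption \ref{assum:general} (ii). We therefore work on $[0,T]$ and replace $g$ by a convex Lipschitz extension plus $\delta_{\dom(g)}$, exactly as in Lemma \ref{le:f2}.

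\textbf{Step 2: Absolute continuity and the chain rule in convex form.} On $[0,T]$, write $\gamma(t)=f(\x(t))+g(\x(t))$ with $\x=\phi\circ\y$. The first term is absolutely continuous because $f\in\cC^1$ and $\x$ is an arc. The second term is absolutely continuous because $g$ is locally Lipschitz on $\dom(g)$ and $\x(t)\in\dom(g)$ for all $t$. For the convex part we use the classical chain rule for convex functions along arcs (Brezis). At a.e.\ $t$ where both $\x$ and $g\circ\x$ are differentiable, take any $\p\in\partial g(\x(t))$. The subgradient inequality, applied for $s\to t^{+}$ and for $s\to t^{-}$, gives the two one-sided bounds
\[
\frac{{\rm d}}{{\rm d}t}g(\x(t))\ \geq\ \left<\p,\dot{\x}(t)\right>
\quad\text{and}\quad
\frac{{\rm d}}{{\rm d}t}g(\x(t))\ \leq\ \left<\p,\dot{\x}(t)\right>,
\]
hence $\frac{{\rm d}}{{\rm d}t}g(\x(t))=\left<\p,\dot{\x}(t)\right>$ for every $\p\in\partial g(\x(t))$. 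Adding the smooth part yields $\gamma^{\prime}(t)=\left<\bm{w},\dot{\x}(t)\right>$ for every $\bm{w}\in\partial F(\x(t))=\nabla f(\x(t))+\partial g(\x(t))$.

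\textbf{Step 3: Conversion to the $\y$-variables.} By \eqref{eq:mirrorflow}, at a.e.\ $t$ we may choose $\bm{w}=-\nabla^2h(\x(t))\dot{\x}(t)\in\partial F(\x(t))$. By \eqref{eq:doty}, $\dot{\y}(t)=\nabla^2h(\x(t))^{\frac12}\dot{\x}(t)$. Substituting into Step 2,
\[
\gamma^{\prime}(t)=-\left<\nabla^2h(\x(t))\dot{\x}(t),\dot{\x}(t)\right>=-\left\|\nabla^2h(\x(t))^{\frac12}\dot{\x}(t)\right\|_2^2=-\|\dot{\y}(t)\|_2^2 .
\]
Equivalently, one can pick $\v=-\dot{\y}(t)\in\partial\Phi(\y(t))$ from \eqref{eq:Euclidean_flow} in Step 1. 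Since $T$ is arbitrary, this gives \eqref{eq:suff_descent}.

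The main obstacle is Step 2: making the chain rule rigorous when $g$ is merely convex and locally Lipschitz on a closed domain, with the arc possibly approaching $\bd(\dom(g))$. The approach above avoids definability entirely, because it works with convexity of $g$ in the $\x$-variables, where the subgradient inequality holds globally on $\dom(g)$.
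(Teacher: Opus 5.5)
Your proof is correct, and all of its substance is in Steps 2--3. Step 1 (definability, the Lipschitz-extension remark) is never used and can be dropped. This is just as well, since the chain rule in \cite{davis2020stochastic} is stated for functions locally Lipschitz on all of $\R^n$, not merely relative to their domain.

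The paper argues in the $\y$-coordinates instead. It asserts that $F$ is regular and transfers regularity to $\Phi=F\circ\phi$ via \cite[Theorem 10.6]{rockafellar2009variational}, using that $\nabla\phi$ is diagonal with positive entries on $\inter(\dom(\phi))^n$. It then notes that $\Phi$ is locally Lipschitz there, invokes the chain rule along arcs for regular locally Lipschitz functions (``by the same arguments of'' \cite{davis2020stochastic}), and finally inserts $-\dot{\y}(t)\in\partial\Phi(\y(t))$ from \eqref{eq:Euclidean_flow}. That is exactly the ``equivalently'' alternative you mention at the end of Step 3.

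You stay in the $\x$-coordinates. You apply the global convex subgradient inequality for $g$ on both sides of a time where $\x$ and $g\circ\x$ are differentiable, which gives $\frac{{\rm d}}{{\rm d}t}F(\x(t))=\langle \bm{w},\dot{\x}(t)\rangle$ for every $\bm{w}\in\partial F(\x(t))$. You then take $\bm{w}=-\nabla^2h(\x(t))\dot{\x}(t)$ directly from \eqref{eq:mirrorflow} and convert with \eqref{eq:doty}.

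Your route has three advantages:
\begin{itemize}
\item It needs neither regularity of the composite $\Phi$ nor the subdifferential formula from Proposition \ref{pro:Euclidean}.
\item It establishes the absolute continuity of $\gamma$ explicitly, via Lipschitz continuity of $g$ on the compact set $\x([0,T])\subseteq\dom(g)$; the paper leaves this implicit.
\item It handles an extended-valued $g$, such as an indicator, without difficulty. The paper's claim that $F$ is locally Lipschitz holds only relative to $\dom(g)$, so the cited lemma must be re-run rather than applied verbatim.
\end{itemize}
The paper's route, for its part, works in the coordinates used by the subsequent K\L{} argument. Because its two-sided argument only uses Fr\'echet subgradients, it would also apply to any regular $F$ that is Lipschitz along the arc, not only to $f+g$ with $g$ convex.
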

	\begin{proof} Recall that $\Phi=F\circ \phi$ and note that (i) $F=f+g$ is regular and locally Lipschitz continuous by \cite[Exercise 8.20 (b)]{rockafellar2009variational} and Assumption \ref{assum:general} (ii); and (ii) $\nabla\phi(\z)=\Diag(\phi^{\prime}(z_1),\ldots,\phi^{\prime}(z_n))$ is of full rank for all $\z\in\inter(\dom(\phi))$ since direct computation gives
		\[\phi^{\prime}(z_i)=\psi^{\prime}(\phi(z_i))^{-1}=\varphi^{\prime\prime}(\phi(z_i))^{-\frac12}>0.\]
		Thanks to \cite[Theorem 10.6]{rockafellar2009variational}, we see that  $\Phi$ is regular on $\inter(\dom(\phi))^n\cap\dom(\Phi)$. Moreover, from the expression of $\nabla\phi(\z)$ and continuity of $\phi$, $\varphi^{\prime\prime}$,  we see that $\phi$ is continuously differentiable and hence locally Lipschitz continuous on $\inter(\dom(\phi))$. It follows that $\Phi$ is also locally Lipschitz continuous on $\inter(\dom(\phi))^n\cap\dom(\Phi)$. Then, by the same arguments of \cite[Lemma 5.4]{davis2020stochastic}, we have
		\[	\gamma^{\prime}(t)=\left<\v,\dot{\y}(t)\right>, \forall ~\v\in\partial\Phi(\y(t)) \quad~{\rm a.e.~ on }~\R_+. \]
		Since $-\dot{\y}(t)\in\partial \Phi(\y(t))$ a.e. on $\R_+$ by \eqref{eq:Euclidean_flow}, the above equality yields \eqref{eq:suff_descent} as desired.
	\end{proof}
	Next, we show that the limiting point of  mirror trajectory is stationary once it converges, avoiding spurious stationary points \cite{chen2025spurious} that may trap BPMs. Similar results were given in \cite{ding2025exploration} for linear constraint sets.  In contrast, our setting poses substantially greater challenges due to the general composite structure of $f$. We defer the proof to appendix since it is quite technical and lengthy.
	\begin{lemma}\label{le:limit}
		Consider the setting of  Theorem \ref{th:main}. Suppose that $\dom(\varphi)$ is closed and $\{\x(t)\}_{t\geq0}$ converges to $\bar{\x}\in\R^m$. Then, $\bar{\x}$ is a stationary point of $f$.
	\end{lemma}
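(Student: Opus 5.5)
We mimic the discrete-time verification of (H3) for the BPPM in Sec. \ref{sec:BPPM_H3}, replacing weighted averages of iterates by time averages along the trajectory. (We read the conclusion as stationarity of $F$, i.e. $\bz\in\partial F(\bar\x)$.)

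\textbf{Step 1: a selection and its boundedness.} By \eqref{eq:mirrorflow} and the chain rule $\frac{{\rm d}}{{\rm d}t}\nabla h(\x(t))=\nabla^2h(\x(t))\dot\x(t)$, there is a measurable selection $\p(t)\in\partial g(\x(t))$ such that
\[\nabla f(\x(t))+\p(t)+\frac{{\rm d}}{{\rm d}t}\nabla h(\x(t))=\bz\quad\text{a.e. on }\R_+.\]
Integrating over $[0,T]$ and dividing by $T$ gives
\[\frac1T\int_0^T\nabla f(\x(s)){\rm d}s+\frac1T\int_0^T\p(s){\rm d}s+\frac{\nabla h(\x(T))-\nabla h(\x(0))}{T}=\bz .\]
The first term tends to $\nabla f(\bar\x)$ by continuity of $\nabla f$ (the Ces\`aro argument of Fact \ref{fact:conver}). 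The main obstacle is to bound $\frac1T\int_0^T\p$, the continuous analogue of Proposition \ref{pro:bound}. Since $\x(t)$ stays in a bounded set, Lemma \ref{le:f2} lets us decompose $\p(t)=\hat\p(t)+\d(t)$ with $\|\hat\p(t)\|_2\le L_g$ and $\d(t)\in\cN_{\dom(g)}(\x(t))$.

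\textbf{Step 2: boundedness of the normal-cone part.} To bound $\d$, we plan to show $\nabla h(\x(T))/T$ is bounded; then the averaged identity bounds $\frac1T\int_0^T\d$. We use Lemma \ref{le:path}: $\int_0^\infty\|\dot\y\|_2^2<\infty$, where $\dot\y=-\nabla^2h(\x)^{-1/2}(\nabla f(\x)+\p)$. Closedness of $\dom(h)$ gives $\varphi''(x)^{-1/2}=\Theta(|x-\bar x_j|^{1/(2(\theta-1))})$ near the boundary. If the averaged $\d$ were unbounded along some $T_k\to\infty$, we would normalize, extract a limit direction, and reach the contradictions of Cases 1 and 2 of Proposition \ref{pro:bound}. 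In the polyhedral case the sign argument applies to $\dot\x$ instead of $\x^{k+1}-\x^k$. In the CQ case, the limit direction lies in $\cN_{\cF}(\bar\x)$, by convexity of the cone and outer semicontinuity, and is supported on boundary coordinates, since interior coordinates of $\nabla h(\x(T))$ converge. This step is where the heavy technical work lies.

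\textbf{Step 3: pass to the limit.} Along a subsequence $T_k\to\infty$, let $\frac1{T_k}\int_0^{T_k}\p\to\bar\p$ and $\nabla h(\x(T_k))/T_k\to\bar\blam$; the $\nabla h(\x(0))/T$ term vanishes. Using $\langle\p(s),\x(s)-\x\rangle\ge g(\x(s))-g(\x)$ and the continuity of $g$ on its domain, averaging gives $\bar\p\in\partial g(\bar\x)$, exactly as in \eqref{eq:pp}. If $\nabla h(\x(T_k))$ stays bounded then $\bar\blam=\bz$; otherwise we rescale and apply Lemma \ref{le:divide} with $T_k\to\infty$ to get $\bar\blam\in\cN_{\dom(h)}(\bar\x)$. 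Then $\bar\p+\bar\blam\in\partial g(\bar\x)$ because $\dom(g)\subseteq\dom(h)$. The limit identity $\nabla f(\bar\x)+\bar\p+\bar\blam=\bz$ therefore yields $\bz\in\partial F(\bar\x)$.
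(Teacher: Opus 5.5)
There is a genuine gap in your Step 2, and a related one in Step 3. Lemma \ref{le:limit} is posed in the setting of Theorem \ref{th:main}, which assumes only Assumption \ref{assum:general} with $\dom(h)$ closed. Assumption \ref{assum:general2} (polyhedral domain or the CQ) is \emph{not} available, so you cannot invoke the contradictions of Cases 1 and 2 of Proposition \ref{pro:bound}.

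Even granting that assumption, the transfer does not work as you describe. What blows up is the time average $\frac1{T_k}\int_0^{T_k}\p$, not a single subgradient, so nothing fixes the sign of $\dot\x(t)$ pointwise; the usable comparison is between $\x(T_k)$ (or $\bar\x$) and $\x(0)$. Your appeal to Lemma \ref{le:path} and the $\Theta$-estimates is also never turned into a bound on $\nabla h(\x(T))/T$.

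Step 3 fails too, where you say ``exactly as in \eqref{eq:pp}''. The discrete argument eliminated $\langle\p^{l+1},\bar\x-\x^{l+1}\rangle$ using the pointwise bound on $\p^{k+1}$ from \eqref{eq:boundM}. Here only averages of $\p$ are controlled, and the normal-cone part satisfies $\langle\d(t),\bar\x-\x(t)\rangle\le 0$, which is the wrong sign for the inequality you need.

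The missing idea, which fixes both gaps without any constraint qualification, is to use $t\mapsto D_h(\bar\x,\x(t))$ as a Lyapunov quantity. It is finite precisely because $\dom(h)$ is closed, so $h(\bar\x)<+\infty$. Along the flow its derivative is $\langle\nabla f(\x(t))+\p(t),\bar\x-\x(t)\rangle$, hence
\[\int_0^T\langle\nabla f(\x(t))+\p(t),\bar\x-\x(t)\rangle\,{\rm d}t\ge -D_h(\bar\x,\x(0)),\]
which is \eqref{eq:inth}. Combine this with the subgradient inequality $\langle\p(t),\x(t)-\z\rangle\ge g(\x(t))-g(\z)$. For every $\z\in\dom(g)$ and $\epsilon>0$ you get $\frac1T\int_0^T\langle\p(t),\bar\x-\z\rangle\,{\rm d}t\ge g(\bar\x)-g(\z)-2\epsilon$, up to a term that vanishes as $T\to\infty$.

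Now suppose the averages blew up along $t_k$ with normalized limit direction $\p^*$. The bound forces $\langle\p^*,\bar\x-\z\rangle\ge 0$ for all $\z\in\dom(g)$, in particular for $\z=\x(0)$. But $\p^*$ is also the normalized limit of $\nabla h(\x(0))-\nabla h(\x(t_k))$. So $\p^*_{\cI}=\bz$, and monotonicity of $\varphi'$ gives $\langle\p^*,\bar\x-\x(0)\rangle<0$, a contradiction.

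The same lower bound then gives $\bar\p\in\partial g(\bar\x)$ in your Step 3. After that, your conclusion via Lemma \ref{le:divide} goes through as written.
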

	With Lemmas \ref{le:path} and \ref{le:limit} in place, we are ready to prove Theorem \ref{th:main}. Let $\psi$, $\phi$ be the parameterization functions of the kernel $h$ as defined in Definition \ref{def:psi}. Consider the parameterized curve $\y:\R_+\to\R^n$ where $\y(t)\coloneqq \psi(\x(t))$. Our strategy is to apply the classical K\L{} convergence analysis to $\{\y(t)\}_{t\geq0}$ and establish its convergence. This would then ensure the convergence of $\{\x(t)\}_{t\geq0}$, which, together with Lemma \ref{le:limit}, proves Theorem \ref{th:main}.
	\begin{proof}[Proof of Theorem \ref{th:main}]
		The proof is similar to that of \cite[Theorem 4.5]{bolte2007lojasiewicz}. Nevertheless, we still provide a proof for completeness.
		
		Recall that the trajectory $\{\x(t)\}_{t\geq0}$ is assumed bounded. There is an accumulation point $\bar{\x}\in\dom(F)\subseteq\dom(h)$ such that $\x(t_k)\to\bar{\x}$ for some sequence $\{t_k\}_{k\geq0}$ with $t_k\to+\infty$.
		Let $\bar{\y}=\psi(\bar{\x})$. We know that $\bar{\y}\in\R^n$ is a well-defined finite vector by Proposition \ref{pro:parameterization} (i). To prove $\x(t)\to\bar{\x}$, our strategy is to prove $\y(t)\to\bar{\y}$.
		
		As $\y(t_k)=\psi(\x(t_k))$ and $\psi$ is continuous  by Proposition \ref{pro:parameterization} (i), we know that \[\lim\limits_{k\to\infty}\y(t_k)=\lim\limits_{k\to\infty}\psi(\x(t_k))=\psi(\bar{\x})=\bar{\y}.\]
		Further, by the continuity of $\Phi=F\circ\phi$, we have 
		\[\lim_{k\to\infty}\gamma(t_k)=\lim_{k\to\infty}\Phi(\y(t_k))= \Phi(\bar{\y}).\]
		This, together with $\eqref{eq:suff_descent}$, which ensures the monotonic decreasing of $\gamma$, implies 
		\begin{equation}\label{eq:gamma}
			\lim_{t\to+\infty}\gamma(t)=\Phi(\bar{\y});\qquad \gamma(t)\geq \Phi(\bar{\y}),\quad\forall~t\in\R_+.
		\end{equation}
		Consider the trivial case of \eqref{eq:gamma}, where 
		\[\gamma(T)=\Phi(\bar{\y})\qquad \text{ for some }~ T\in\R_+.\] In this case, the limit $\gamma(t)\to \Phi(\bar{\y})$ and the monotonic decrease of $\gamma(t)$ imply $\gamma(t)=\Phi(\bar{\y})$ for $t\geq T$.
		Then, it follows from \eqref{eq:suff_descent} that $\|\dot{\y}(t)\|_2=\gamma^{\prime}(t)=0$ a.e. on $[T,+\infty)$ and further ${\y}(t)\equiv\y(T)$ for $t\geq T$. This, together with $\y(t_k)\to\bar{\y}$, yields $\y(T)=\bar{\y}$ and $\y(t)\to\bar{\y}$. 
		
		Moreover, the trajectory $\y(t)$ has a finite length since $\int_{0}^{+\infty}\|\dot{\y}(t)\|_2{\rm d}t=\int_{0}^T\|\dot{\y}(t)\|_2{\rm d}t$ and
		\begin{equation}\label{eq:finite}
			\begin{aligned}
				\int_{0}^T\|\dot{\y}(t)\|_2~{\rm d}t&\leq \frac12\int_{0}^T\left(1+\|\dot{\y}(t)\|^2_2\right){\rm d}t
				\\
				&=\frac{T}{2}-\frac12	\int_{0}^T\gamma^{\prime}(t){\rm d}t\\
				&=\frac{T}{2}+\frac12\left(\gamma(0)-\gamma(T)\right)<+\infty,
			\end{aligned}	
		\end{equation}
		where the first inequality uses the Cauchy inequality and the second one is due to \eqref{eq:suff_descent}.
		
		We then focus on the nontrivial case where 
		\[\gamma(t)>\Phi(\bar{\y}),\qquad \forall~ t\in\R_+.\]
		Let us first show that the parameterized trajectory $\{\y(t)\}_{t\geq0}$ is bounded.
		By Proposition \ref{pro:parameterization} (ii) and the boundedness of $\{\x(t)\}_{t\geq0}$, i.e., $\x(t)\in[a,b]^n$ for some $a,b\in\dom(\varphi)$, we have
		\[\y(t)\in[c,d]^n,\qquad\quad \forall~t\in\R_+, \]
		with $[c,d]=\psi([a,b])\subseteq\dom(\varphi)$ also bounded.
		
		Then, we show that the K\L{} inequality holds for $\Phi$, for which we study the definability of $\Phi$.
		Write $\Phi|_{[c,d]^n}=F\circ \phi|_{[c,d]^n}$, where $\phi|_{[c,d]^n}=(\phi|_{[c,d]},\ldots,\phi|_{[c,d]})$. By Proposition \ref{pro:parameterization} (iii) and Proposition \ref{pro:o1} (iv), $\phi|_{[c,d]^n}$ is definable in $\cS(\R_{{\rm an,exp}})$. This, together with $F\in\cS(\R_{{\rm an,exp}})$ and the composition rule Proposition \ref{pro:o1} (v), yields 
		\[\Phi|_{[c,d]^n}\in\cS(\R_{{\rm an,exp}}).\]
		Then, by the K\L{} inequality \cite[Theorem 14]{bolte2007clarke} and the fact $\partial \Phi\subseteq \partial_C \Phi$ (see \cite[Equation (7)]{bolte2007clarke}; where $\partial_C$ denotes Clarke subdifferential),  there a scalar $\rho>0$, a strictly increasing continuous definable function $\zeta:[0,\rho)\to\R_+$ which is continuously differentiable on $(0,\rho)$ with $\zeta(0)=0$, and a continuous definable function $w:\R_+\to(0,\rho)$ such that
		\begin{equation}\label{eq:KLw}
			\zeta^{\prime}\left(|\Phi(\z)-\Phi(\bar{\y})|\right)\dist\left(\bz,\partial \Phi(\z)\right)\geq1,
		\end{equation}
		whenever $0<|\Phi(\z)-\Phi(\bar{\x})|\leq w(\|\z\|_2)$. 
		
		Let $\eta=\min_{\z\in[c,d]^n}w(\|z\|_2)$. The continuity of $w:\R_+\to(0,\rho)$ implies $\eta\in(0,\rho)$. Then, the K\L{} inequality \eqref{eq:KLw} holds whenever  $0<|\Phi(\z)-\Phi(\bar{\x})|\leq\eta$ and $\z\in[c,d]^n$.
		
		Recall that $\Phi(\y(t))=\gamma(t)> \Phi(\bar{\y})$ and $\gamma(t)\to \Phi(\bar{\y})$. We know that there exists $T>0$ such that
		\[\gamma(t)-\Phi(\bar{\y})\in(0,\eta],\qquad~\forall~t\geq T.\]
		Let $\z=\y(t)\in[c,d]^n$ in \eqref{eq:KLw} and notice $-\dot{\y}(t)\in\partial \Phi(\y(t))$ by \eqref{eq:Euclidean_flow}. We have
		\begin{equation}\label{eq:KL2}
			\zeta^{\prime}\left(\gamma(t)-\Phi(\bar{\y})\right)\|\dot{\y}(t)\|_2\geq1,\qquad {\rm a.e.}\quad [T,+\infty).
		\end{equation}
		Combined with \eqref{eq:suff_descent}, the K\L{} inequality \eqref{eq:KL2} yields that a.e. on $[T,+\infty)$,
		\[\begin{aligned}
			\frac{{\rm d} }{{\rm d}t }\zeta\left(\gamma(t)-\Phi(\bar{\y})\right)&=\zeta^{\prime}\left(\gamma(t)-\Phi(\bar{\y})\right){\gamma}^{\prime}(t)\\
			&= -\zeta^{\prime}\left(\gamma(t)-\Phi(\bar{\y})\right)\|\dot{\y}(t)\|^2_2\\
			&\leq -\frac{1}{\|\dot{\y}(t)\|}\|\dot{\y}(t)\|^2_2= -\|\dot{\y}(t)\|_2.
		\end{aligned} \]
		Integrate on $[T,+\infty)$ in both sides. It follows that
		\begin{equation}\label{eq:finite2}
			\begin{aligned}
				\int_{T}^{+\infty}\|\dot{\y}(t)\|{\rm d}t
				&\leq\zeta\left(\gamma(T)-\Phi(\bar{\y})\right)-\lim_{t\to\infty}\zeta\left(\gamma(t)-\Phi(\bar{\y})\right)\\
				&=\zeta\left(\gamma(T)-\Phi(\bar{\y})\right)-\zeta(0)\zeta\left(\gamma(T)-\Phi(\bar{\y})\right),
			\end{aligned}
		\end{equation}
		where the first equality is due to the continuity of $\zeta$ and $\gamma(t)\to \Phi(\bar{\y})$, and the second equality uses $\zeta(0)=0$.
		
		The above bound, together with the arguments in \eqref{eq:finite}, ensures the finite length of the trajectory $\{\y(t)\}_{t\geq0}$. We conclude that $\{\y(t)\}_{t\geq0}$ converges to the accumulation point $\bar{\y}$. 
		
		Back to the mirror descent trajectory $\{\x(t)\}_{t\geq0}$. We see from the continuity of $\phi$ that
		\[\lim_{t\to\infty}\x(t)=\lim_{t\to\infty}\phi(\y(t))=\phi(\bar{\y})=\bar{\x}. \]
		This says that the original trajectory $\{\x(t)\}_{t\geq0}$ converges to $\bar{\x}$. Moreover, it has a finite length as the chain rule \eqref{eq:doty}, the boundedness of $\x(t)$, and the continuity of $\hat{\varphi}$ on $\cl(\dom(\varphi))$ by (A3), imply
		\[\int_0^{+\infty}\left\|\dot{\x}(t)\right\|_2{\rm d}t=\int_0^{+\infty}\left\| \Diag\left(\sqrt{\varphi^{\prime\prime}(\x(t))}\right)^{-1}\dot{\y}(t)\right\|_2{\rm d}t\leq \sigma\int_0^{+\infty}\left\|\dot{\y}(t)\right\|_2{\rm d}t<+\infty,\]
		for some $\sigma>0$.
		
		Finally, by Lemma \ref{le:limit}, we know that the limiting point $\bar{\x}$ is a critical point of $f$. We complete the proof.

	\end{proof}

	\section{Conclusion and Discussion}\label{sec:conclusion}
	In this paper, we extended the SK\L{} property in \cite{chen2026skl}, as well as the scaled sufficient decrease and scaled relative error, to general kernels and objective functions, thereby establishing a unified convergence framework for BPMs. To verify its applicability, we showed that the extended SK\L{} property continues to hold for continuous subanalytic functions when the kernel has a closed domain, and that sequences generated by the BPGM and BPPM satisfy the assumptions of the framework under mild regularity conditions. Moreover, through the parameterization technique, we established trajectory convergence of the continuous-time BPM, namely mirror flow, to stationary points for o-minimal definable objective functions. Taken together, these results provide a broad convergence theory for both discrete- and continuous-time BPMs.
	
	Despite these advances, many questions remain open in the convergence analysis of BPMs. First, the closedness of $\dom(h)$ plays a crucial role in our convergence analysis. Examples \ref{example:skl_fails} and \ref{example:unbound} reveal fundamental challenges in analyzing BPMs under kernels with open domains: The (extended) SK\L{} property may fail for simple semi-algebraic functions, and the parameterized trajectory could become unbounded even if the original mirror flow is bounded. These phenomena suggest that fundamentally new techniques may be required to analyze BPMs under open-domain kernels. Second, our unified convergence framework relies on a regularity condition on the domain set (see Assumption \ref{assum:general2}). Since analogous conditions are typically unnecessary for Euclidean distance-based methods and continuous-time BPM, it is natural to ask whether this assumption can be removed so that the framework applies to arbitrary closed convex domains. Third, the present work focuses primarily on asymptotic convergence analysis. It would be of substantial interest to develop non-asymptotic guarantees that characterize the convergence behavior of BPMs in finite time. We hope that future investigations along these directions will further deepen the understanding of Bregman proximal methods.
	
	\section*{Acknowledgements}
	We did not use AI to develop the ideas or proofs in this paper. AI tools (primarily GPT-4o) were used solely for checking grammar and typos.
	
	\appendix
	\section{Proof of Proposition \ref{pro:estimate}}
	Let us first prove the following technical lemma that slightly generalizes the \L{}ojasiewicz inequality for a univariable function, providing both lower and upper bounds on the function value growth.
	\begin{lemma}\label{le:exponent}
		Let $\zeta:\R\to\overline{\R}$ be a continuous globally subanalytic function on a closed interval. Suppose that $\zeta$ is continuously differentiable on $\inter(\dom(\zeta))$ with $\zeta({x})=0$ for  ${x}\in\bd(\dom(\zeta))$ and $\zeta(x)>0$ for $x\in\inter(\dom(\zeta))$. Then, at $\bar{x}\in \bd(\dom(\zeta))$, either $\zeta$ has  K\L\ exponent $0$ or there exist scalars $\Delta>0$, $\eta\in(0,1)$ such that
		\begin{equation}\label{eq:zeta_estimate}
			|\zeta^{\prime}(x)|=\Theta\left(\zeta(x)^{\eta}\right)\qquad{\rm on}\quad [\bar{x}-\Delta,\bar{x}+\Delta]\cap\inter(\dom(\zeta)),
		\end{equation}
		where $\zeta^{\prime}(x)$ is positive (resp. negative) if $\bar{x}$ is the left-end (resp. right-end) point.
	\end{lemma}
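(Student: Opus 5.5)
We work in a one-sided neighborhood of $\bar{x}$ and use the o-minimal structure $\cS(\R_{\rm an})$. Without loss of generality, $\bar{x}$ is the left endpoint of $\dom(\zeta)$, so near $\bar{x}$ the relevant set is $(\bar{x},\bar{x}+\Delta]$; the right-end case follows by the reflection $x\mapsto 2\bar{x}-x$. The argument has three steps.

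\textbf{Step 1: sign of $\zeta'$.} Since $\zeta$ is globally subanalytic, its derivative $\zeta'$ is definable on $\inter(\dom(\zeta))$: its graph is a first-order definable limit of difference quotients. By the monotonicity theorem for o-minimal structures, $\zeta'$ has constant sign on some interval $(\bar{x},\bar{x}+\Delta_0)$. Because $\zeta(\bar{x})=0<\zeta(x)$ for $x>\bar{x}$, the function $\zeta$ cannot be nonincreasing there, so $\zeta'\geq0$. Again by o-minimality, the zero set of $\zeta'$ near $\bar{x}$ is a finite union of points and intervals; an interval of zeros would force $\zeta$ to be locally constant equal to $\zeta(\bar{x})=0$, contradicting positivity. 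Shrinking $\Delta_0$, we get $\zeta'>0$ on $(\bar{x},\bar{x}+\Delta_0)$. Hence $\zeta$ is strictly increasing there and admits an inverse $\zeta^{-1}:(0,\epsilon)\to(\bar{x},\bar{x}+\Delta_0)$ that is also globally subanalytic.

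\textbf{Step 2: Puiseux expansion of $\zeta'\circ\zeta^{-1}$.} Consider the one-variable function $u(t):=\zeta'(\zeta^{-1}(t))$ on $(0,\epsilon)$. It is globally subanalytic, hence definable in $\R_{\rm an}$. By the Puiseux-type preparation theorem for one-variable globally subanalytic functions (van den Dries–Miller), either $u\equiv0$ near $0$ or $u(t)=c\,t^{\eta}(1+o(1))$ as $t\downarrow0$, with $c\neq0$ and $\eta\in\Q$. The first alternative is excluded by Step 1. Positivity of $u$ forces $c>0$, and therefore $u(t)=\Theta(t^{\eta})$ on a small interval. Substituting $t=\zeta(x)$ gives $\zeta'(x)=\Theta(\zeta(x)^{\eta})$ on $(\bar{x},\bar{x}+\Delta]$ for some $\Delta>0$.

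\textbf{Step 3: range of $\eta$ — the main obstacle.} It remains to show that $\eta<1$, and to identify the case $\eta\le0$ with the K\L{} exponent $0$ alternative.

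For $\eta\ge1$, use $\zeta'\le C\zeta$ near $\bar{x}$. This gives $(\log\zeta)'\le C$, so $\log\zeta(x)\ge\log\zeta(x_1)-C(x_1-x)$ for $x<x_1$. The right-hand side stays bounded as $x\downarrow\bar{x}$, whereas $\zeta(x)\to0$ forces $\log\zeta(x)\to-\infty$, a contradiction. Hence $\eta<1$.

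For $\eta\le0$, the derivative $\zeta'$ is bounded away from $0$ near $\bar{x}$. Then $\dist(0,\partial\zeta(x))\ge c>0$, which is exactly the K\L{} inequality at $\bar{x}$ with exponent $0$; this is the first alternative. Otherwise $\eta\in(0,1)$ and \eqref{eq:zeta_estimate} holds.

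I expect Step 3 to be the delicate point. One must handle the boundary convention for $\eta=0$ carefully, and also ensure that definability of $\zeta'$ and of $\zeta^{-1}$ is justified. If the Puiseux theorem for $\R_{\rm an}$ is not accepted as given, a fallback is to apply the \L{}ojasiewicz inequality \cite{bolte2007lojasiewicz} to get the lower bound $\zeta'\ge c\,\zeta^{\eta}$. The matching upper bound would then come from the monotonicity theorem applied to the definable ratio $\zeta'/\zeta^{\eta}$, which must have a finite positive limit.
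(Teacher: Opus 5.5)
Your proof is correct and follows essentially the paper's route: both reduce the claim to the Puiseux-type asymptotics $c\,t^{\eta}(1+o(1))$ of $t\mapsto\zeta'(\zeta^{-1}(t))$, which is the paper's $\xi(t)$, imported from \cite{bolte2007lojasiewicz}; in both, the case where this function stays bounded away from zero gives the K\L{} exponent $0$ alternative. The differences are minor: you get the sign of $\zeta'$ from the o-minimal monotonicity theorem and $\eta<1$ from a Gr\"onwall bound on $\log\zeta$, whereas the paper derives both from the \L{}ojasiewicz inequality (nonvanishing of $\zeta'$ plus IVT/MVT for the sign, and $\eta\le\theta<1$ for the exponent).
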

	\begin{proof}[Proof of Lemma \ref{le:exponent}]
		We combine the proof of \cite[Theorem 3.1]{bolte2007lojasiewicz} and specialized properties of $\zeta$ to prove the lemma.  As $\dom(\zeta)$ is a closed interval, without loss of generality, we assume that $\bar{x}$ is the left-end point of $\dom(\zeta)$. Since $\zeta(\bar{x})=0$ and $\zeta(x)\geq0$ for $x\in\dom(\zeta)$, we know that $\bar{x}$ is a global optimal minimizer of $\zeta$, and thus a stationary point of $\zeta$. 
		
		To take advantage of the results of \citet{bolte2007lojasiewicz}, we introduce the notion therein. Let $\cS$ be the stationary point set of $\zeta$, $\cS_{\bar{x}}$ be the connected component of $\cS$ containing $\bar{x}$, and $\cU_{\Delta_1}:=\{x\in\dom(\zeta):\dist(x,\cS_{\bar{x}})\leq\Delta_1\}$ with $\Delta_1>0$ be a set that separates $\cS_{\bar{x}}$ from other connected components of $\cS$. Note that the continuity and subanalyticity of $\zeta$ ensure $\zeta(x)\equiv \zeta(\bar{x})=0$ for $x\in\cS_{\bar{x}}$ by \cite[Equation (6)]{bolte2007lojasiewicz}. We have $\cS_{\bar{x}}\subseteq\zeta^{-1}(0)=\bd(\dom(\zeta))$. Since $\bd(\dom(\zeta))$ consists of at most two isolated points, every connected component is a singleton. It follows that $\cS_{\bar{x}}=\{\bar{x}\}$ and $\cU_{\Delta_1}=[\bar{x},\bar{x}+\Delta_1]$ is compact.
		
		Furthermore, we define \[m_{\zeta}(x)\coloneqq\inf\{|v|:v\in\partial \zeta(x)\};\quad \xi(t)=\inf\left\{m_{\zeta}(x):x\in\cU_{\Delta_1}\cap \zeta^{-1}(t)\right\}.\]
		Then, by \cite[Equations (8) and (12)]{bolte2007lojasiewicz}, the following hold:
		\begin{enumerate}[label={{\rm (\roman*)}}]
			\item There exist scalars $r>0$, $\theta\in[0,1)$, and $\Delta_2>0$ with $\bar{x}+\Delta_2\in\inter(\dom(\zeta))$ such that
			\begin{equation}\label{eq:zeta}
				\zeta(x)^{\theta}=|\zeta(x)-\zeta(\bar{x})|^{\theta}\leq r\cdot m_{\zeta}(x),\qquad \forall~x\in [\bar{x},\bar{x}+\Delta_2].
			\end{equation}
			\item $\lim_{t\to0+}\xi(t)=l\in[0,+\infty]$.
			\item  When $l=0$, there exist $c,\eta>0$ such that
			\begin{equation}\label{eq:approxi}
				\xi(t)=ct^{\eta}+o(t^{\eta}).
			\end{equation}
		\end{enumerate}
		To prove \eqref{eq:zeta_estimate}, our strategy is to simplify \eqref{eq:approxi} based on the properties of $\zeta$.
		We begin with $m_{\zeta}(x)$. By the differentiability of $\zeta$ on $\inter(\dom(\zeta))$, we have  $m_{\zeta}(x)=|\zeta^{\prime}(x)|$ for $x\in\inter(\dom(\zeta))$. This, together with \eqref{eq:zeta} and $\zeta(x)>0$ for $x\in\inter(\dom(\zeta))$, implies that
		\[ |\zeta^{\prime}(x)|>0,\qquad \forall~x\in(\bar{x},\bar{x}+\Delta_2]. \]
		As $\zeta^{\prime}$ is continuous on $\inter(\dom(\zeta))$, the sign of $\zeta^{\prime}$ does not change on $(\bar{x},\bar{x}+\Delta_2]$; otherwise there exists $y\in(\bar{x},\bar{x}+\Delta_2)$ such that $\zeta^{\prime}(y)=0$ by the intermediate value theorem, leading to a contradiction. On the other hand, the mean value theorem asserts that for every $x\in(\bar{x},\bar{x}+\Delta_2)$ there exists $y\in (\bar{x},x)$ such that $\zeta(x)-\zeta(\bar{x})=\zeta^{\prime}(y)(x-\bar{x})$. This, along with $\zeta(x)-\zeta(\bar{x})=\zeta(x)>0$ and $x-\bar{x}>0$, implies $\zeta^{\prime}(y)>0$. Taken together, we conclude that 
		\begin{equation}\label{eq:zeta1m}
			\zeta^{\prime}(x)>0,\qquad  m_{\zeta}(x)=|\zeta^{\prime}(x)|=\zeta^{\prime}(x), \qquad \forall~x\in(\bar{x},\bar{x}+\Delta_2]. 
		\end{equation}
		We then simplify $\xi(t)$. Observe that \eqref{eq:zeta1m} ensures that $\bar{x}$ is the unique stationary point in $[\bar{x},\bar{x}+\Delta_2]$. WLOG, we may assume that  $\Delta_1\leq\Delta_2$. It follows that $\zeta$ is strictly monotonically increasing on $\cU_{\Delta_1}=[\bar{x},\bar{x}+\Delta_1]$, and $[\bar{x},\bar{x}+\Delta_1]\cap\zeta^{-1}(t)$ is unique for $t\in[0,\zeta(\bar{x}+\Delta_1)]$. We obtain
		\begin{equation}\label{eq:zeta2}
			\xi(t)=m_{\zeta}(x)=\zeta^{\prime}(x) \text{ with }x=(\bar{x},\bar{x}+\Delta_1]\cap\zeta^{-1}(t), \qquad\forall~ t\in(0,\zeta(\bar{x}+\Delta_1)].
		\end{equation}
		Now, let us prove that either $\zeta$ has a K\L{} exponent of $0$ at $\bar{x}$ or \eqref{eq:zeta_estimate} holds.  
		
		When $\lim_{t\to0+}\xi(t)=l>0$, there is  $\bar{\alpha}\in(0,\zeta(\bar{x}+\Delta_1)]$ such that
		\[\xi(t)\geq\min\{l/2,1\},\qquad\forall~t\in(0,\bar{\alpha}],\]
		where $\xi(t)=\zeta^{\prime}(x)$ with $x=(\bar{x},\bar{x}+\Delta_1]\cap\zeta^{-1}(t)$ by \eqref{eq:zeta2}. This, together with the continuity and strict increasing of $\zeta$ on $(\bar{x},\bar{x}+\Delta_1]$, yields  \[\zeta^{\prime}(x)>\min\left\{l/2,1\right\},\qquad\forall~x\in(\bar{x},\zeta^{-1}(\bar{\alpha})],\]  
		which ensures that $\zeta$ has a K\L{} exponent of $0$ at $\bar{x}$. 
		
		When $\lim_{t\to0+}\xi(t)=0$, we have the estimate \eqref{eq:approxi}. Let $t=\zeta(x)$ for $x\in(\bar{x},\bar{x}+\Delta_1]$ in \eqref{eq:approxi}. By \eqref{eq:zeta2}, we obtain
		\[  \zeta^{\prime}(x)=c\cdot\zeta(x)^{\eta}+o(\zeta(x)^{\eta}). \]
		Clearly, one has $\eta\leq\theta<1$ since \eqref{eq:zeta} ensures $\zeta^{\prime}(x)=\Omega(\zeta(x)^{\theta})$. It follows that $\eta\in(0,1)$. Combined with the continuity of $\zeta$ and $\zeta(\bar{x})=0$, the above estimate implies \eqref{eq:zeta_estimate}. We complete the proof of Lemma \ref{le:exponent}.
	\end{proof}
	With Lemma \ref{le:exponent} in hand, we are ready to prove Proposition \ref{pro:estimate}. Consider $\zeta=\hat{\varphi}^2$ in Lemma \ref{le:exponent} and assume without loss of generality that $\bar{x}\in \bd(\dom(\zeta))=\bd(\dom(\varphi))$ is the left-end point of $\dom(\hat{\varphi})=\cl(\dom(\varphi))$. We see that either $\hat{\varphi}^2$ has the K\L{} exponent $0$ or there exist scalars $\Delta>0$, $\eta\in(0,1)$, such that
	\begin{equation}\label{eq:hatvarphi}
		\frac{\zeta^{\prime}(x)}{\zeta(x)^{\eta}}=\Theta\left(1\right) \qquad{\rm on }\quad x\in (\bar{x},\bar{x}+\Delta]\subseteq\inter(\dom(\varphi)).
	\end{equation}
	We first preclude that $\zeta=\hat{\varphi}^2$ has the K\L{} exponent $0$. If so, then there exists $c,\Delta_1>0$ such that $\zeta^{\prime}(x)\geq c$ for $x\in(\bar{x},\bar{x}+\Delta_1]$. It follows that
	\[ \zeta(x)=\hat{\varphi}^2(x)=\frac{1}{\varphi^{\prime\prime}(x)^2}\geq c(x-\bar{x}),\qquad\forall~x\in (\bar{x},\bar{x}+\Delta_1]. \]
	This implies $\varphi^{\prime\prime}(x)\leq 1/\sqrt{c(x-\bar{x})}$ for $x\in (\bar{x},\bar{x}+\Delta_1]$ and further
	\[\varphi^{\prime}(x)-\varphi^{\prime}(\bar{x}+\Delta_1)=-\int_x^{\bar{x}+\Delta_1}\varphi^{\prime\prime}(s){\rm d}s\geq -\int_x^{\bar{x}+\Delta_1} \frac1{\sqrt{c(s-\bar{x})}}{\rm d}s=\frac{2\left(\sqrt{\Delta_1}-\sqrt{x-\bar{x}} \right)}{\sqrt{c}} .\]
	On the other hand, we have $\varphi^{\prime}(x)\leq\varphi^{\prime}(\bar{x}+\Delta_1)$ by $\varphi^{\prime\prime}>0$ and $x\leq x+\Delta_1$. It follows that $\limsup_{x\to\bar{x}+}|\varphi^{\prime}(x)|<+\infty$, which contradicts the essential smoothness given in Definition \ref{def:kernel} (iii). We conclude that $\zeta$ cannot have the K\L{} exponent $0$ at $\bar{x}$. 
	
	Then, let us focus on the condition \eqref{eq:hatvarphi}, which implies
	\[  \  \frac{\left(\zeta^{1-\eta}\right)^{\prime}(x)}{1-\eta}=\frac{-2\varphi^{\prime\prime\prime}(x)}{\varphi^{\prime\prime}(x)^{3-2\eta}}=\Theta(1),\quad{\rm on }\quad (\bar{x},\bar{x}+\Delta]. \]
	To prove \eqref{eq:estimate}, it suffices to show that $\theta_{\bar{x}}:=3-2\eta\in (1,2]$ and $\varphi^{\prime\prime}(x)=\Theta(|x-\bar{x}|^{-\frac1{\theta_{\bar{x}}-1} })$. Indeed, from the above estimate and $\zeta(\bar{x})=\hat{\varphi}(\bar{x})^2=0$, we see that  
	\[  \frac{1}{\varphi^{\prime\prime}(x)^{2-2\eta} }=\zeta(x)^{1-\eta}=\int^{x}_{\bar{x}}\left(\zeta^{1-\eta}\right)^{\prime}(s){\rm d}s=\Theta\left(|x-\bar{x}|\right)\quad{\rm on }\quad (\bar{x},\bar{x}+\Delta],  \]
	which directly implies 
	\begin{equation}\label{eq:estimate_temp}
		\varphi^{\prime\prime}(x)=\Theta\left(\frac1{(x-\bar{x})^{\frac{1}{2-2\eta}} }\right)=\Theta\left(|x-\bar{x}|^{-\frac1{\theta_{\bar{x}}-1} }\right)\quad{\rm on }\quad (\bar{x},\bar{x}+\Delta].
	\end{equation}
	To estimate the range of $\theta_{\bar{x}}$, we observe that the essential smoothness of $\varphi$, i.e., $\lim_{x\to\bar{x}+}|\varphi^{\prime}(x)|=+\infty$, yields \[\lim_{x\to\bar{x}+}\int^{\bar{x}+\Delta}_x\varphi^{\prime\prime}(s){\rm d}s=\varphi^{\prime}(\bar{x}+\Delta)-\lim_{x\to\bar{x}+}\varphi^{\prime}(x)=+\infty,\] which, together with the estimate \eqref{eq:estimate_temp}, yields $\frac1{2-2\eta}\geq1$. Hence, we have $\eta\in[\frac12,1)$ and $\theta_{\bar{x}}=3-2\eta\in(1,2]$. Finally, we remark that \eqref{eq:estimate_temp} ensures the uniqueness of $\theta_{\bar{x}}$. We complete the proof.
	
	\section{Proof of Corollary \ref{co:closed}}
	To prove the first equivalence, we note that the closedness of $\dom(\varphi)$ is equivalent to $\bar{x}\in\dom(\varphi)$, i.e., $\varphi(\bar{x})<+\infty$, which amounts to  $\liminf_{x\to\bar{x}+}\varphi(x)<+\infty$ by the lower-semicontinuity and convexity of $\varphi$. Recall that Proposition \ref{pro:estimate} asserts that there exists $\Delta>0$ and  $\alpha_{\underline{x}}\in(1,2]$ such that
	\begin{equation}\label{eq:estimate1}
		\varphi^{\prime\prime}(x)=\Theta\left(|x-\bar{x}|^{-\frac1{\theta_{\bar{x}}-1} }\right)\quad {\rm on }\quad (\bar{x},\bar{x}+\Delta]. 
	\end{equation}
	Observe that for $x\in(\bar{x},\bar{x}+\Delta]$,
	\[\varphi(x)-\varphi(\bar{x}+\Delta)=\int_{x}^{\bar{x}+\Delta}-\varphi^{\prime}(s_1){\rm d}s_1=\int_{x}^{\bar{x}+\Delta}\left(\int_{s_1}^{\bar{x}+\Delta}\varphi^{\prime\prime}(s_2){\rm d}s_2-\varphi^{\prime}(\bar{x}+\Delta)\right) {\rm d}s_1. \]
	The equality says that $\liminf_{x\to\bar{x}+}\varphi(x)<+\infty$ is equivalent to
	\[\int_{\bar{x}}^{\bar{x}+\Delta}\int_{s_1}^{\bar{x}+\Delta}\varphi^{\prime\prime}(s_2){\rm d}s_2{\rm d}s_1<+\infty.  \]
	By \eqref{eq:estimate1}, the above integrability holds if and only if $\frac{1}{\theta_{\bar{x}}-1}<2$. As $\theta_{\bar{x}}\in(1,2]$, this is equivalent to $\theta_{\bar{x}}\in(\frac32,2]$.  Therefore, the first equivalence holds.
	
	We the consider the second equivalence. We assume $y\leq x$ without loss of generality. Observe that the continuity of $\varphi^{\prime\prime}$ on $\inter(\dom(\varphi))$ ensures $\sup_{s\in[x,y]}\varphi^{\prime\prime}(s)<+\infty$ and further 
	\begin{equation}\label{eq:boundint}
		0\leq\int^x_y\sqrt{\varphi^{\prime\prime}(s)}{\rm d}s<+\infty\qquad \text{ if}~x,y\in\inter(\dom(\varphi)).
	\end{equation}
	Hence, the equivalence trivially holds if $\dom(\varphi)=\R$. It suffices to consider the case where $\dom(\varphi)\subsetneqq\R$ with $\bd(\dom(\varphi))\neq\emptyset$.
	We only provide proof for the case where $\cl(\dom(\varphi))= [\bar{x},+\infty)$ since the proofs of other cases are nearly the same.

	On the other hand, by $y\leq x$ and $\varphi^{\prime\prime}>0$, we have
	\[ 0\leq\int^{x}_{y}\sqrt{\varphi^{\prime\prime}(s)}~{\rm d}s\leq \int^{x}_{\bar{x}}\sqrt{\varphi^{\prime\prime}(s)}~{\rm d}s=\int^{\bar{x}+\Delta}_{\bar{x}}\sqrt{\varphi^{\prime\prime}(s)}~{\rm d}s+\int_{\bar{x}+\Delta}^{x}\sqrt{\varphi^{\prime\prime}(s)}~{\rm d}s.\]
	Since $x,\bar{x}+\Delta\in\inter(\dom(\varphi))$, the second term in the right-hand side is bounded by \eqref{eq:boundint}. Then, $|\int^{x}_{y}\sqrt{\varphi^{\prime\prime}(s)}~{\rm d}s| <+\infty$ holds if and only if the first term is bounded, which, by \eqref{eq:estimate1}, is equivalent to
	\[\int_{\bar{x}}^{\bar{x}+\Delta}|s-\bar{x}|^{-\frac1{2(\theta_{\bar{x}}-1)} }{\rm d}s <+\infty.  \]
	The above integrability holds if and only if $\frac{1}{2(\theta_{\bar{x}}-1)}<1$. Recall $\theta_{\bar{x}}\in(1,2]$. This is equivalent to $\theta_{\bar{x}}\in(\frac32,2]$.  We conclude that  the second equivalence holds.
	We complete the proof.
	
	\section{Proof of Lemma \ref{le:limit}}
	Recall that $\partial F=\nabla f+\partial g$ by \cite[Exercise 8.8(c)]{rockafellar2009variational}, and the curve $\x$ satisfies the mirror descent dynamics \eqref{eq:mirrorflow}. We know that there is a curve $\p:\R_+\to\R^n$ with $\p(t)\in\partial g(\x(t))$ such that
	\begin{equation}\label{eq:dhdt}
		\frac{{\rm d} }{{\rm d}t} \nabla h\left(\x(t)\right)+\nabla f\left(\x(t)\right)+\p\left(\x(t)\right)=\bz\quad~{\rm a.e.~ on }~\R_+. 
	\end{equation}
	Integrate on $[0,T]$ for $T>0$ and divide $T$. We obtain
	\begin{equation}\label{eq:divide}
		\frac{\nabla h(\x(T))-\nabla h(\x(0))}{T}+\frac{1}{T}\int_{0}^T\nabla f(\x(t)){\rm d}t+\frac{1}{T}\int_{0}^T \p(\x(t)){\rm d}t=\bz,\quad\forall~T>0.
	\end{equation}
	Our goal is proving that the limit of \eqref{eq:divide} is the stationary condition of $f$ at $\bar{\x}$ as  $T\to+\infty$.
	\subsection{Step 1: Basic observations.}
	Lemma \ref{le:divide} ensures the limit of the first term in \eqref{eq:divide} (when exists) belongs to the normal cone $\cN_{\dom(h)}(\bar{\x})$.
	
	Then, we consider the limit of the second term $\frac{1}{T}\int_{0}^T\nabla f(\x(t)){\rm d}t$.
	Note that $\nabla f(\x(t))\to\nabla f(\bar{\x})$ by $\x(t)\to\bar{\x}$ and the continuous differentiability of $f$. We have $\max_{t\in\R_+}\|\nabla f(\x(t))-\nabla f(\bar{\x})\|_2\leq \beta_1$ for some $\beta_1>0$. Moreover, for any $\epsilon>0$, there exists $T_{\epsilon}>0$ such that
	\[\|\nabla f(\x(t))-\nabla f(\bar{\x})\|_2\leq \epsilon,\quad\forall~t\geq T_{\epsilon}.\] 
	It follows that for any $\epsilon>0$,
	\[\begin{aligned}
		\lim_{T\to\infty}\left\|\frac{1}{T}\int_{0}^T\nabla f(\x(t)){\rm d}t-\nabla f(\bar{\x}) \right\|_2
		&=\lim_{T\to\infty}\left\|\frac{1}{T}\int_{0}^T\nabla f(\x(t))-\nabla f(\bar{\x}){\rm d}t \right\|_2\\
		&\leq\lim_{T\to\infty}\frac{1}{T}\int_{0}^T\left\|\nabla f(\x(t))-\nabla f(\bar{\x})\right\|_2{\rm d}t \\
		&=\lim_{T\to\infty}\frac{1}{T}\left( \int_{0}^{T_{\epsilon}}+\int^T_{T_{\epsilon}}\right)\left\|\nabla f(\x(t))-\nabla f(\bar{\x})\right\|_2{\rm d}t \\
		&\leq \lim_{T\to\infty}\frac{T_{\epsilon}\beta_1}{T}+\lim_{T\to\infty}\frac{T-T_{\epsilon}}{T}\epsilon=\epsilon, 
	\end{aligned}\]
	which yields $\frac{1}{T}\int_{0}^T\nabla f(\x(t)){\rm d}t\to \nabla f(\bar{\x})$.
	
	Next, we consider the limit of $\frac{1}{T}\int_{0}^T \p(\x(t)){\rm d}t$.
	Let us define the interior and boundary index sets of $\bar{\x}$ by
	\[\cI\coloneqq\left\{i:\bar{x}_i\in\inter(\dom(\varphi)) \right\};\qquad \cJ\coloneqq\left\{j:\bar{x}_j\in\bd(\dom(\varphi)) \right\}. \]
	For $i\in\cI$, observe that \eqref{eq:divide} and the convergences $\varphi^{\prime}(x_i(t))\to \varphi^{\prime}(\bar{x}_i)$ and $\frac{1}{T}\int_{0}^T\nabla f(\x(t)){\rm d}t\to \nabla f(\bar{\x})$ imply
	\begin{equation}\label{eq:boundI}
		\begin{aligned}
			\lim_{T\to\infty}\frac1T\int_{0}^T p_i(\x(t)){\rm d}t&=	\lim_{T\to\infty}\frac{\varphi^{\prime}(x_i(0))- \varphi^{\prime}(x_i(T)) }{T}-	\lim_{T\to\infty}\frac{1}{T}\int_{0}^T\nabla_i f(\x(t)){\rm d}t\\
			&=-\nabla_if(\bar{\x}),\qquad\forall~i\in \cI.
		\end{aligned}
	\end{equation}
	For $j\in\cJ$, we need to show that $\frac1T\int_{0}^T p_j(\x(t)){\rm d}t$ is bounded on $\R_+$.
	\subsection{Step 2: Establish the boundedness of the mean subdifferential.}
	To show the boundedness of $\frac1T\int_{0}^T \p(\x(t)){\rm d}t$ on $\R_+$,
	our strategy is to estimate the mean inner product $\frac1{T}\int_{0}^{T} \left<\p(\x(t)),\bar{\x}-\z\right>{\rm d}t$ for $\z\in\dom(g)$ and yield a contradiction when the boundedness fails. Observe that for $\z\in\dom(g)$,
	\[\int_{0}^{T} \left<\p(\x(t)),\bar{\x}-\z\right>{\rm d}t=\int_{0}^{T} \left<\p(\x(t)),\x(t)-\z\right>{\rm d}t+\int_{0}^{T} \left<\p(\x(t)),\bar{\x}-\x(t)\right>{\rm d}t. \]
	We estimate the two terms in the right-hand side, respectively.
	
	\textbf{Step 2.1: Estimate of $\int_{0}^{T} \left<\p(\x(t)),\x(t)-\z\right>{\rm d}t$.} 
	
	Recall that $\x(t)\to\bar{\x}$ and $g$ is continuous on $\dom(g)$. We have $\min_{t\in\R_+}g(\x(t))\geq \beta_2$ for some $\beta_2>-\infty$. Moreover, for any $\epsilon>0$, there exits $t_{\epsilon}>0$ such that 
	\[  \|\bar{\x}-\x(t)\|_2\leq\epsilon;\qquad |g(\x(t))-g(\bar{\x})|\leq \epsilon,\quad \forall~ t\geq t_{\epsilon}.\]
	Combined with the convexity of $g$, these imply that for any $\epsilon>0$, $T\geq t_{\epsilon}$, and $\z\in\dom(g)$,
	\begin{equation}\label{eq:p}
		\begin{aligned}
			& \int_{0}^{T} \left<\p(\x(t)),\x(t)-\z\right>{\rm d}t\\
			\geq&  \int_{0}^{T} g(\x(t))-g(\z){\rm d}t\\
			=& \left(\int_{0}^{t_{\epsilon}}+\int_{t_{\epsilon}}^{T}\right) g(\x(t))-g(\z){\rm d}t 	\\
			\geq& {t_{\epsilon}}\left(\beta_2-g(\z)\right)+\left(T-t_{\epsilon}\right)\left( g(\bar{\x})-g(\z)-\epsilon \right).\\
		\end{aligned}
	\end{equation}
	
	\textbf{Step 2.2: Estimate of $\int_{0}^{T} \left<\p(\x(t)),\bar{\x}-\x(t)\right>{\rm d}t$.}
	
	Using \eqref{eq:dhdt} and integration by parts, we have
	\begin{equation*}
		\begin{aligned}
			&\int_{0}^{T} \left<\nabla f(\x(t))+\p(\x(t)),\bar{\x}-\x(t)\right>{\rm d}t\\
			=&  \int_{0}^{T}\left<\frac{{\rm d} }{{\rm d}t} \nabla h\left(\x(t)\right),\x(t)-\bar{\x}\right>{\rm d}t\\
			=&\left<\nabla h(\x(T)),\x(T)-\bar{\x}\right>-\left<\nabla h(\x(0)),\x(0)-\bar{\x}\right> -\int_{0}^{T}\left<\nabla h(\x(t)),\dot{\x}(t)\right>{\rm d}t.\\
		\end{aligned}
	\end{equation*}
	Note that the convexity of $h$ and direct integration imply
	\[ \langle\nabla h(\x(T)),\x(T)-\bar{\x}\rangle\geq h(\x(T))- h(\bar{\x});\]
	\[\int_{0}^{T}\left<\nabla h(\x(t)),\dot{\x}(t)\right>{\rm d}t=\int_{0}^{T}\frac{{\rm d}}{{\rm d}t}h(\x(t)){\rm d}t=h(\x(T))-h(\x(0)).\]
	We further have
	\begin{equation}\label{eq:inth}
		\begin{aligned}
			&\int_{0}^{T} \left<\nabla f(\x(t))+\p(\x(t)),\bar{\x}-\x(t)\right>{\rm d}t\\
			\geq&h(\x(T))-h(\bar{\x})-\left<\nabla h(\x(0)),\x(0)-\bar{\x}\right> +h(\x(0))-h(\x(T))\\ 
			= & -h(\bar{\x})-\left<\nabla h(\x(0)),\x(0)-\bar{\x}\right>+h(\x(0))\\
			=& -D_h(\bar{\x},\x(0)),
		\end{aligned}
	\end{equation}
	where  $h(\bar{\x}),D_h(\bar{\x},\x(0))<+\infty$ since $\dom(h)$ is closed and $\bar{\x}\in\dom(F)\subseteq\dom(h)$. 
	
	We then estimate $ \int_{0}^{T} \left<\nabla f(\x(t)),\bar{\x}-\x(t)\right>{\rm d}t$.
	By the convergence of $\x(t)$ and continuity of $\nabla f$, we know that $ \max_{t\in\R_+}\|\nabla f(\x(t))\|_2\cdot \max_{t\in\R_+}\|\bar{\x}-\x(t)\|_2\leq\beta_3$ for some $\beta_3>0$.
	Similar to \eqref{eq:p}, we have 
	\begin{equation}\label{eq:f1}
		\begin{aligned}
			& \int_{0}^{T} \left<\nabla f(\x(t)),\bar{\x}-\x(t)\right>{\rm d}t\\
			\leq& {\max_{t\in\R_+}\left\|\nabla f(\x(t))\right\|_2}\int_{0}^T\left\|\bar{\x}-\x(t)\right\|_2{\rm d}t\\
			=& {\max_{t\in\R_+}\left\|\nabla f(\x(t))\right\|_2}\left(\int_{0}^{t_{\epsilon}}+\int_{t_{\epsilon}}^T\right)\left\|\bar{\x}-\x(t)\right\|_2{\rm d}t\\
			\leq&{t_{\epsilon}\cdot\max_{t\in\R_+}\|\nabla f(\x(t))\|_2 \max_{t\in\R_+}\|\bar{\x}-\x(t)\|_2}+\left(T-t_{\epsilon}\right)\epsilon\\
			=& t_{\epsilon}\beta_3+\left(T-t_{\epsilon}\right)\epsilon.
		\end{aligned}
	\end{equation}
	Combining \eqref{eq:inth} and \eqref{eq:f1}, we obtain 
	\begin{equation}\label{eq:p1}
		\int_0^{T}\left<\p(\x(t)),\bar{\x}-\x(t)\right>{\rm d}t\geq -D_h(\bar{\x},\x(0))- {t_{\epsilon}\beta_3}- {\left(T-t_{\epsilon}\right)\epsilon}.
	\end{equation}
	Sum up \eqref{eq:p} and \eqref{eq:p1} and divide ${T}$ in both sides. 
	We see that for all $\epsilon>0$, $T\geq t_{\epsilon}$ and $\z\in\dom(g)$,
	\begin{equation}\label{eq:p3}
		\begin{aligned}
			&\frac1{T}\int_{0}^{T} \left<\p(\x(t)),\bar{\x}-\z\right>{\rm d}t\\
			\geq& \frac{t_{\epsilon}}{T}\left(\beta_2-g(\z)\right)+\frac{T-t_{\epsilon}}{T}\left( g(\bar{\x})-g(\z)-\epsilon \right)\\
			&- \frac{D_h(\bar{\x},\x(0))}T-\frac{t_{\epsilon}\beta_3}T-\frac{\left(T-t_{\epsilon}\right)\epsilon}T.
		\end{aligned}
	\end{equation}
	
	\textbf{Step 2.3: Find a contradiction.}
	
	Now, suppose that there is a sequence $\{t_k\}_{k\geq0}$ with $t_k\to+\infty$ such that 
	\[\lim_{k\to\infty}\left\|\frac1{t_k}\int_{0}^{t_k} \p(\x(t)){\rm d}t\right\|_2=+\infty.\]
	Moreover, by passing to a subsequence if necessary, suppose  \[\frac{\frac1{t_k}\int_{0}^{t_k} \p(\x(t)){\rm d}t}{\|\frac1{t_k}\int_{0}^{t_k} \p(\x(t)){\rm d}t\|_2}\to\p^*\in\R^n.\]
	In \eqref{eq:p3}, let $T=t_k$, divide $\|\frac1{t_k}\int_{0}^{t_k} \p(\x(t)){\rm d}t\|_2$ in both sides, and take $k\to+\infty$. As $\|\frac1{t_k}\int_{0}^{t_k} \p(\x(t)){\rm d}t\|_2\to+\infty$ ensure the right-side goes to zero, we obtain
	\[	\left<\p^*,\bar{\x}-\z\right>\geq 0,\qquad\forall~\z\in\dom(g).\]
	In particular, 
	\begin{equation}\label{eq:contrad}
		\left<\p^*,\bar{\x}-\x(0)\right>\geq 0.
	\end{equation}
	We then seek a contradiction to \eqref{eq:contrad}. Let $T=t_k$ in \eqref{eq:divide}, divide $\|\frac1{t_k}\int_{0}^{t_k} \p(\x(t)){\rm d}t\|_2$, and take $k\to\infty$. One has
	\begin{equation}\label{eq:p*}
		\begin{aligned}
			\p^*=\lim_{k\to\infty}\frac{\nabla h(\x(0))-\nabla h(\x(t_k))}{\left\|\int_{0}^{t_k} \p(\x(t)){\rm d}t\right\|_2 }.
		\end{aligned}
	\end{equation}
	For $\p^*_{\cI}$, the convergence $x_i(t_k)\to\bar{x}_i\in\inter(\dom(\varphi))$ yields the boundedness of $\{\varphi^{\prime}(x_i(t_k))\}_{k\geq0}$, $i\in\cI$. It follows from \eqref{eq:p*} and $\|\int_{0}^{t_k} \p(\x(t)){\rm d}t\|_2\to+\infty$ that $\p^*_{\cI}=\bz$. 
	
	For $\p^*_{\cJ}$,  the limit \eqref{eq:p*}, the convergence $\x(t_k)\to\bar{\x}$, and the convexity of $\varphi$ imply
	\[p^*_j\left(\bar{x}_j-x_j(0)\right)=\lim_{k\to\infty}\frac{\left(\varphi^{\prime}(x_j(0))-\varphi^{\prime}(x_j(t_k))\right)(x_j(t_k)-x_j(0))}{\left\|\int_{0}^{t_k} \p(\x(t)){\rm d}t\right\|_2 } \leq0,\quad\forall~j\in\cJ, \]  
	where the equality holds if and only if $p^*_j=0$ since $\bar{x}_j-x_j(0)\neq0$ for $j\in\cJ$.
	
	The above inequality, together with $\p^*_{\cJ}\neq\bz$ ($\|\p^*_{\cJ}\|_2=\|\p^*\|_2=1$), yields the strict inequality  $\langle\p^*_{\cJ},\bar{\x}_{\cJ}-\x_{\cJ}(0)\rangle<0$. This, together with $\p^*_{\cI}=\bz$, implies
	\[\left<\p^*,\bar{\x}-\x(0)\right>=\left<\p^*_{\cJ},\bar{\x}_{\cJ}-\x_{\cJ}(0)\right><0, \]
	which contradicts \eqref{eq:contrad}. We conclude that $\frac1T\int_{0}^T \p(\x(t)){\rm d}t$ is bounded on $\R_+$.
	
	\subsection{Step 3: Deduce the stationarity condition.}
	Due to the boundedness of $\frac1T\int_{0}^T \p(\x(t)){\rm d}t$ on $\R_+$, we may assume that \[\frac1{T_k}\int_{0}^{T_k} \p(\x(t)){\rm d}t\to\bar{\p}\in\R^n\]
	for some positive sequence $\{T_k\}_{k\geq0}$ with $T_k\to+\infty$.  Let $T=T_k$ in \eqref{eq:p3} and take  $k\to+\infty$. We see that for  any $\epsilon\in(0,r)$,
	\[\left<\bar{\p},\bar{\x}-\z\right>\geq g(\bar{\x})-g(\z)-2\epsilon,\qquad\forall~\z\in\dom(g). \]
	Let $\epsilon\to0$ and rearrange the inequality. We obtain
	\begin{equation}\label{eq:pf2}
		g(\z)- g(\bar{\x})\geq \left<\bar{\p},\z-\bar{\x}\right>,\qquad\forall~\z\in\dom(g).
	\end{equation}
	This says that $\bar{\p}\in\partial g(\bar{\x})$ for the convex function $g$.
	
	Now, let us deduce the limit of \eqref{eq:divide}. Let $T=T_k$ in \eqref{eq:divide} with $k\to\infty$, we have
	\[\bar\blam\coloneqq\lim_{k\to\infty}\frac{\nabla h(\x(T_k))-\nabla h(\x(0))}{T_k}=-\nabla f(\bar{\x})-\bar{\p},\]
	where the second equality is due to $\frac{1}{T}\int_{0}^T\nabla f(\x(t)){\rm d}t\to \nabla f(\bar{\x})$ and $\frac1{T_k}\int_{0}^{T_k} \p(\x(t)){\rm d}t\to\bar{\p}$. 
	
	Moreover, by Lemma \ref{le:divide}, $\x(T_k)\to\bar{\x}$, and $\nabla h(\x^0)/T_k\to\bz$, we have
	\[ \left<\bar{\blam},\z-\bar{\x} \right>\leq0,\quad\forall~\z\in\dom(h).\]
	This, together with \eqref{eq:pf2}, implies
	\[ g(\z)- g(\bar{\x})\geq \left<\bar{\p}+\bar{\blam},\z-\bar{\x}\right>,\qquad\forall~\z\in\dom(g). \]
	This says that $\bar{\p}+\bar{\blam}\in\partial g(\bar{\x})$. Recall $\bar\blam=-\nabla f(\bar{\x})-\bar{\p}$. We have
	\begin{equation}\label{eq:critical0}
		\bz=	\nabla f(\bar{\x})+\bar{\p}+\bar\blam\in\nabla f(\bar{\x})+\partial g(\bar{\x})= \partial F(\bar{\x}).
	\end{equation}
	That is, $\bar{\x}$ is a stationary point of $F$. We complete the proof.
	
	\bibliography{ref}
	\bibliographystyle{plainnat}
\end{document}